\documentclass[11pt]{amsart}%
\usepackage{amsfonts}
\usepackage{txfonts}
\usepackage{mathtools}
\usepackage{amsmath}
\usepackage{amssymb}
\usepackage{amsthm}
\usepackage{newlfont}
\usepackage{graphicx}
\RequirePackage{color}
\usepackage{color}

\voffset=-30pt

\setcounter{MaxMatrixCols}{30}
\providecommand{\U}[1]{\protect\rule{.1in}{.1in}}
\numberwithin{equation}{section}

\newcommand{\beq}{\begin{equation}}
\newcommand{\eeq}{\end{equation}}
\newcommand{\beqs}{\begin{eqnarray*}}
\newcommand{\eeqs}{\end{eqnarray*}}
\newcommand{\beqn}{\begin{eqnarray}}
\newcommand{\eeqn}{\end{eqnarray}}
\newcommand{\beqa}{\begin{array}}
\newcommand{\eeqa}{\end{array}}

  \newcommand{\ii}{{\sqrt{-1}}}

\newcommand{\C}{{\mathbb C}}

\newcommand{\vp}{\iota}

\newcommand{\ddbar}{\sqrt{-1} \partial \overline{\partial}}

\newtheorem{prop}{Proposition}[section]
\newtheorem{theo}[prop]{Theorem}
\newtheorem{lem}[prop]{Lemma}

\newtheorem{rem}[prop]{Remark}

\def\S{\mathbb S}
\def\C{\mathbb C}
\def\T{\mathbb T}

\def\i{\sqrt{-1}}

\subjclass[2010]{53C43, 53A10, 53C55. }

\keywords{Stability of harmonic maps, minimal surfaces, Gauduchon metrics, Hopf surfaces.}

\allowdisplaybreaks
\arraycolsep=1pt

\title[Stability of harmonic maps]{On stability of the fibres of Hopf surfaces as harmonic maps and minimal surfaces}

\author{Jingyi Chen}

\address{Department of Mathematics,
The University of British Columbia,
Vancouver, BC V6T1Z2, Canada}
\email{jychen@math.ubc.ca}
\author{Liding Huang}
\address{School of Mathematical Sciences, University of Science and Technology of China, Hefei 230026, P. R. China}
\email{huangld@mail.ustc.edu.cn}


%
%

\begin{document}
\maketitle


\vspace{-.5cm}

\begin{abstract}
We construct a family of Hermitian metrics on the Hopf surface $\S^3\times\S^1$,
 whose fundamental classes represent distinct cohomology classes in the Aeppli 
 cohomology group. These metrics are locally conformally K\"ahler. Among the toric fibres of $\pi:\S^3\times\S^1\to\C P^1$ two of them are stable minimal surfaces and each of the two has a neighbourhood so that fibres therein are given by stable harmonic maps from 2-torus and outside, far away from the two tori, there are unstable harmonic ones that are also unstable minimal surfaces. A similar result is true for $\S^{2n-1}\times \S^{1}$. 
\end{abstract}

\section{Introduction}

It is interesting to know when a holomorphic map from a compact Riemann surface to a non-K\"ahlerian Hermitian manifold is area minimizing. Besides mappings between K\"ahler manifolds, little seems to be known about the second variation of holomorphic maps between Hermitian manifolds, even in the case that they are also harmonic maps, for the energy functional. A strong reason for this phenomenon is that the Riemannian properties of a Hermitian metric are considerably more complicated than those of a K\"aher metric. It demands specific information on the Riemannian curvature of the Hermitian metric to determine stability or instability from the second variation formulas. 

\vspace{.1cm}

In this paper, we construct explicitly a continuous family of Hermitian metrics on $\S^3\times\S^1$;
with these metrics, we study stability of the fibres of $\pi:\S^3\times\S^1\to\C P^1$ as harmonic maps for the energy functional and as minimal surfaces for the area functional in $\S^3\times\S^1$.

\vspace{.1cm}

A classical theorem of Kodaira \cite{KK} states that a compact complex surface homeomorphic to $\S^3\times\S^1$ is complex analytically diffeomorphic to a Hopf surface.
Such surface cannot carry any K\"ahler metric as its second Betti number is 0.
Hermitian metrics on the Hopf surfaces have been constructed explicitly; for example, Gauduchon and Ornea produced locally conformally K\"ahler metrics \cite{GO}. We demonstrate, in Section 2, a construction of Hermitian metrics ${g}_\epsilon$ via a sequence of Hermitian metrics $\tilde{g}_\epsilon$  on the Calabi-Eckmann 3-fold $\S^3\times\S^3$ induced from $\C^2\times \epsilon^2\C^2$. Each $\tilde{g}_\epsilon$ induces a Hermitian metric $g_\epsilon$ on $\S^3\times\S^1$ for $\epsilon>0$. The collapsing sequence $(\S^3\times\S^3,\tilde{g}_\epsilon)$ converges to $(\S^3\times\S^1,{g}_0)$ in the Gromov-Hausdorff distance as $\epsilon\to 0$, and ${g}_0$ is a smooth Hermitian metric on $\S^3\times\S^1$. The fundamental classes associated with these non-K\"ahler metrics belong to distinct cohomology classes in the Aeppli cohomology $H^{1,1}_A$, which is isomorphic to the Bott-Chern cohomology $H^{1,1}_{BC}$. The metrics $g_\epsilon$ are different from those in \cite{GO}, see Remark 2.7.


Throughout the paper, we use the complex structure defined by Calabi and Eckmann \cite{CE} on $\S^3\times\S^1$ and denote the complex coordinates by $w_0 = x_0+\i x_2, w_1=x_1+\i x_3$, where $w_0$ is on the fibre and $w_1$ on the base $\C P^1$, see Section 2.1.

\vspace{.1cm}

Holomorphic maps may not be harmonic in a non-K\"ahler setting, as one can multiply the target Hermitian metric by positive functions to obtain new Hermitian metrics resulting in new harmonic map equations. Lichnerowicz gave a sufficient condition in \cite{Li}; however, it is not satisfied by our ${g}_\epsilon$. Harmonicity of the holomorphic inclusions in Theorem \ref{main3} can be verified from the harmonic map equation as written by Sampson \cite{Sampson} or from the fact that $\T^2_p$ is totally geodesic in $(\S^3\times\S^1,{g}_\epsilon)$, both rely on explicit computation of geometric quantities of ${g}_\epsilon$, see Section 3. For mappings between K\"ahler manifolds, it was also observed in \cite{Li} that the energy-minimizing maps are precisely the $\overline{\partial}$-energy minimizing maps, because the energy and the $\overline{\partial}$-energy differ by a homotopy invariant, namely the K\"ahler class of the target K\"ahler manifold evaluated at the homology class of the image of the mapping (cf. \cite[p.192]{Siu-Yau}). This is not necessarily true for mappings into Hermitian manifolds.


\vspace{.1cm}

Our analysis of stability relies strongly on  the vanishing of particular curvature components of the Riemannian curvature (cf. Lemma \ref{curvature}), together with delicate control on the  positive terms (unfavorite for stability) of curvature in the second variation formula by certain part of the covariant derivatives of vector fields. This allows us to verify nonnegativity for all variation fields or exhibit negativity along certain variation fields of the Morse index form, according to whether the harmonic maps are close to or far from the two special tori (see below). 

Another interesting fact of the Riemannian curvature of $g_\epsilon$ is
$R(X,Y,\overline{X},\overline{Y})\geq 0$ for all $X,Y$ in $T^{1,0}(\S^3\times\S^1)$ and it vanishes precisely on two distinguished tori
$$\T^2_{p_0}=\{0\}\times\S^1\times\S^1,\ \ \T^2_{p_1}=\S^1\times\{0\}\times\S^1$$ in $\C^2\times\C$, corresponding to the fibres over $p_0,p_1\in\C P^1$, respectively. Vectors in the holomorphic tangent bundle $T^{1,0}(\S^3\times\S^1)$ are isotropic vectors but not vice versa. We do not know whether the curvature is nonnegative on \textit{all} isotropic 2-planes, see \cite{MM} (Compact Hermitian surfaces with nonnegative isotropic curvatures were discussed in \cite{AD}), so we shall use the real form of the second variation formula for energy. The curvature term in the second variation for area vanishes at the two minimal surfaces $\T^2_{p_0},\T^2_{p_1}$. Although it is difficult to identifying zeros of curvature in general, a link between stability and zero locus of the ambient curvature was already revealed four decades ago by Bourguignon and Yau  in \cite{BY} stating that the Riemannian curvature operator of a $K3$ surface vanishes along any nonconstant closed stable geodesic.

\vspace{.1cm}

The main results of the paper are summarized as follows. For existence of geometrically different Hermitian metrics:

\begin{theo}\label{main1}
 There is a family of Hermitian metrics ${g}_\epsilon$ on the Hopf surface $\S^3\times\S^1$ for constant $\epsilon\in[0,1]$ with the following properties:

\hspace{-.2cm}(1) Each $g_\epsilon$ is locally conformally K\"ahler and is a Gauduchon metric with non-parallel Lee form.

 \hspace{-.2cm}(2) The fundamental classes $\omega_\epsilon$ of ${g}_\epsilon$ represent different elements in $H^{1,1}_A(\S^3\times\S^1)$ for different $\epsilon$.

\hspace{-.2cm}(3) For any linearly independent $X, Y\in T^{1, 0}_x(\S^{3}\times \S^{1})$, $R\left(X, Y,\overline{X},\overline{Y}\right)\geq 0$ where $R$ is the Riemannian curvature operator and $x\in \S^{3}\times \S^{1}$, it equals to zero at $x\in \S^{3}\times \S^{1}$ if and only if $x\in \T^2_{p_0}\cup\T^2_{p_1}$.
\end{theo}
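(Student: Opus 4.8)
The plan is to prove Theorem \ref{main1} by a direct, coordinate-based computation once the metrics $g_\epsilon$ have been written down explicitly in Section 2. First I would set up the construction: on $\S^3\times\S^3\subset\C^2\times\epsilon^2\C^2$ one has the Calabi--Eckmann complex structure, and the Hermitian metric $\tilde g_\epsilon$ induced from the flat scaled metric on $\C^2\times\epsilon^2\C^2$; pushing down along the collapsing $\S^1$-factor gives $g_\epsilon$ on $\S^3\times\S^1$. I would express $g_\epsilon$ in the Calabi--Eckmann holomorphic coordinates $w_0=x_0+\i x_2$, $w_1=x_1+\i x_3$, so that $g_\epsilon$ appears as an explicit Hermitian matrix $(g_{i\bar j})$ with entries built from $|w_0|^2,|w_1|^2$ and $\epsilon$; the fundamental form is $\omega_\epsilon=\i g_{i\bar j}\,dw^i\wedge d\bar w^j$.

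For part (1), I would verify the local conformal K\"ahler condition by exhibiting, on each member of a suitable open cover, a positive function $f$ with $\ddbar\log f$ cancelling the non-closed part of $\omega_\epsilon$; equivalently, show $d\omega_\epsilon = \theta\wedge\omega_\epsilon$ for a closed Lee form $\theta$, and then compute $\theta$ explicitly and check it is not parallel with respect to $g_\epsilon$ (its covariant derivative is a tensor one reads off from the Christoffel symbols). The Gauduchon condition $\partial\db\,\omega_\epsilon^{n-1}=0$ is automatic in complex dimension $2$ once $\theta$ is co-closed, which follows since $\theta$ is (up to scale) the standard Hopf Lee form; this is a short check. For part (2), I would compute the Bott--Chern (equivalently Aeppli, via the isomorphism valid here) cohomology class of $\omega_\epsilon$: since $H^{1,1}_{BC}(\S^3\times\S^1)$ is one-dimensional, it suffices to integrate $\omega_\epsilon$ against a fixed generator of the dual homology/cohomology — concretely, pair $\omega_\epsilon$ with the fundamental class of a fibre $\T^2_p$ or evaluate a natural linear functional — and show the resulting number depends (strictly monotonically) on $\epsilon$. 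This reduces (2) to a one-variable calculus fact about an explicit integral.

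For part (3), I would compute the Riemannian curvature of $g_\epsilon$ directly from the Hermitian data. From the metric matrix I get the Christoffel symbols (Hermitian connection plus torsion correction, or just the Levi-Civita symbols in real coordinates), then the full Riemann tensor $R$; the relevant quantity is the bisectional-type expression $R(X,Y,\overline X,\overline Y)$ for $X,Y\in T^{1,0}_x$. Using the symmetry under the torus action $(w_0,w_1)\mapsto(e^{\i s}w_0,e^{\i t}w_1)$ and the further symmetry swapping $w_0\leftrightarrow w_1$ (up to the $\epsilon$-scaling), I would reduce the verification of $R(X,Y,\overline X,\overline Y)\ge 0$ to a pointwise inequality in the two real parameters $r_0=|w_0|^2$, $r_1=|w_1|^2$ (with $r_0+\epsilon^{-2}r_1$ or similar fixed), which I expect to factor as a sum of squares or a manifestly nonnegative combination; the equality locus is then read off as exactly $\{r_0=0\}\cup\{r_1=0\}$, i.e.\ $\T^2_{p_0}\cup\T^2_{p_1}$.

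The main obstacle I anticipate is the curvature computation in part (3): the Hermitian metric is not K\"ahler, so the curvature tensor does not enjoy the usual K\"ahler symmetries, and the expression $R(X,Y,\overline X,\overline Y)$ over a general pair $X,Y$ (not just $X\wedge\overline X$ type) involves many terms; extracting the correct nonnegativity and pinning down the exact zero set requires either a clean choice of frame (e.g.\ an orthonormal frame adapted to the Hopf fibration and the torus action) or an organizing principle that collapses the computation to a low-dimensional, checkable inequality. Getting that reduction right — and in particular proving the zero set is \emph{precisely} the two tori and not larger — is the delicate point; everything else (parts (1) and (2), and harmonicity/totally-geodesic claims used elsewhere) follows from the explicit formulas by routine, if lengthy, verification. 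I would isolate the curvature identities as a separate lemma (this is Lemma \ref{curvature} referenced in the introduction) and then deduce (3) from it.
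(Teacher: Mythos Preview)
Your overall strategy---write the metric explicitly and verify everything by direct computation---is exactly what the paper does, and parts (1) and (2) are essentially on track (the paper verifies $\partial\bar\partial\omega_\epsilon=0$ term-by-term rather than via co-closedness of $\theta$, and handles (2) by showing $\omega_1-\tfrac{2}{1+\epsilon^2}\omega_\epsilon$ is a multiple of the Chern--Ricci form, hence $d$-exact since $H^2(\S^3\times\S^1)=0$, together with a Harvey--Lawson argument that $[\omega_1]\neq 0$; your pairing-with-a-fibre idea would also work).

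However, your plan for part (3) rests on a misreading of the coordinates that will cause the argument to fail. In the Calabi--Eckmann chart, $w_0$ is the coordinate on the torus \emph{fibre} and $w_1$ is the inhomogeneous coordinate on the base $\C P^1$; they play completely asymmetric roles. The metric matrix $(g_{\epsilon,i\bar j})$ depends only on $w_1,\bar w_1$ (through $A=1+|w_1|^2$) and not on $w_0$ at all, so there is no $w_0\leftrightarrow w_1$ symmetry to exploit, and the entries are not functions of $|w_0|^2$. Consequently the zero locus cannot be $\{|w_0|=0\}\cup\{|w_1|=0\}$: in the chart $V_{00}$ (where $z_0\neq 0$) only the torus $\{w_1=0\}=\{z_1=0\}$ appears, while the other special torus $\{z_0=0\}$ lies at $w_1=\infty$ and is seen only in the chart $V_{10}$.

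What the paper actually does for (3) is much more direct than your proposed reduction. Since $T^{1,0}$ is two-dimensional, for any linearly independent $X,Y$ one has $R(X,Y,\overline X,\overline Y)=|ad-bc|^2\,R(\partial_{w_0},\partial_{w_1},\partial_{\bar w_0},\partial_{\bar w_1})$, so one only needs this single component. Expanding into real coordinates and using that $g_{ij}$ is independent of $x_0,x_2$ (so $\Gamma^i_{00}=\Gamma^i_{02}=\Gamma^i_{22}=0$), Lemma \ref{curvature} shows the relevant $R_{0i0j},R_{2i2j},R_{0i2j}$ reduce to quadratic expressions in a \emph{single} quantity $g_{03,1}-g_{01,3}$ (and its $x_2$-analogue), giving
\[
R\big(\partial_{w_0},\partial_{w_1},\partial_{\bar w_0},\partial_{\bar w_1}\big)=\tfrac{1}{2}g^{11}\frac{\pi^2(1+\epsilon^2)^2}{4A^4}(x_1^2+x_3^2)^2\geq 0,
\]
vanishing iff $w_1=0$ in $V_{00}$ (and symmetrically $w_{10}=0$ in $V_{10}$). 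The organizing principle you were looking for is not a torus symmetry in $(w_0,w_1)$ but the much simpler fact that the metric is independent of the fibre coordinate $w_0$; this kills the second-derivative terms in $R_{0i0j}$ and reduces everything to products of Christoffel symbols.
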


For stability of the fibres as harmonic maps and as minimal embeddings:

\begin{theo}\label{main3}
Let $\pi:\S^3\times\S^1\to\C P^1$ be the Hopf fibration and $\T^2_p=\pi^{-1}(p)$ with $p\in\C P^1$ and $p_0,p_1$ correspond to the two special tori. Then there exist neighbourhoods $U_0,U_1$ of $p_0,p_1$ in $\C P^1,$ respectively, such that the holomorphic inclusion $f_{p}:\T^2_p\hookrightarrow (\S^3\times\S^1,g_\epsilon)$ 
is a stable harmonic map when $p\in U_0\cup U_1$, and $f_{p_0},f_{p_1}$ are stable minimal embeddings. 
Furthermore, $f_p$ is an unstable harmonic map and an unstable minimal embedding, when either $x_1=0$, $|x_3|>\hspace{-.15cm}\sqrt{2}$ or $x_3=0$, $|x_1|>\hspace{-.15cm}\sqrt{2}$, 
where $w_1=x_1+\i x_3$ is the holomorphic coordinate on $\C P^1$. 
\end{theo}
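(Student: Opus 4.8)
The strategy splits naturally into a stability half and an instability half, both resting on the explicit curvature computations of Section 2 (in particular Lemma \ref{curvature}) and on the second variation formulas for the energy and area functionals. First I would record that each fibre $\T^2_p$ is a flat $2$-torus and, via the explicit form of $g_\epsilon$, that the holomorphic inclusion $f_p$ is harmonic --- either by checking Sampson's form of the harmonic map equation directly, or by the cleaner route of showing $\T^2_p$ is totally geodesic in $(\S^3\times\S^1,g_\epsilon)$; the latter immediately kills the second fundamental form term in both second variation formulas, so that the index form reduces to $I(V,V)=\int_{\T^2_p}\bigl(|\nabla V|^2 - \sum_i R(V,e_i,V,e_i)\bigr)$ for the energy (with the sum over an orthonormal frame of the torus), and the analogous expression with $V$ restricted to the normal bundle for the area functional. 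The next step is to decompose an arbitrary variation field $V$ along $f_p$ into components adapted to the splitting $T(\S^3\times\S^1)=T\T^2_p\oplus N\T^2_p$ and, within each, into the $\partial/\partial x_i$ directions, and to expand the curvature term using the vanishing components from Lemma \ref{curvature}.

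For the stability assertion near $p_0,p_1$: by Theorem \ref{main1}(3) the ``bad'' curvature $R(X,Y,\overline X,\overline Y)$ is nonnegative everywhere and vanishes exactly on $\T^2_{p_0}\cup\T^2_{p_1}$. At $p_0$ and $p_1$ themselves the curvature term in the area second variation vanishes identically on normal fields, so $I(V,V)=\int |\nabla^\perp V|^2\ge 0$ and $f_{p_0},f_{p_1}$ are stable minimal embeddings with no work beyond confirming the normal-bundle curvature really is the quantity controlled by Theorem \ref{main1}(3). For nearby fibres the curvature term is small but no longer zero, so the argument must be quantitative: I would Taylor-expand the relevant curvature components in the base coordinate $w_1$ around $w_1=0$ (resp. around the point corresponding to $p_1$), show they are $O(|w_1|^2)$, and then absorb the resulting positive contribution into the gradient term $|\nabla V|^2$ using a Poincar\'e-type inequality on the flat torus together with the control of curvature by covariant derivatives of vector fields emphasized in the introduction. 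Choosing $U_0,U_1$ small enough that the $O(|w_1|^2)$ error is dominated then yields $I(V,V)\ge 0$ for all $V$; for the harmonic-map statement one works with the full tangent-valued $V$ and the real second variation formula (since we cannot assume nonnegativity on all isotropic planes), for the minimal-surface statement one restricts to normal $V$.

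For the instability assertion one wants, on a fibre $\T^2_p$ with $x_1=0,\ |x_3|>\sqrt 2$ (or symmetrically $x_3=0,\ |x_1|>\sqrt2$), an explicit variation field $V$ making $I(V,V)<0$. The natural candidates are the parallel (constant-coefficient) normal fields coming from the ambient coordinate vector fields $\partial/\partial x_1,\partial/\partial x_3$ restricted to the fibre: for such a $V$ the term $|\nabla V|^2$ is either zero or cheaply bounded, while $-\sum_i R(V,e_i,V,e_i)$ becomes, after plugging in the explicit curvature of $g_\epsilon$, a negative constant precisely when $|x_3|>\sqrt2$ (resp. $|x_1|>\sqrt2$) --- the threshold $\sqrt2$ should fall straight out of the coefficients of $g_\epsilon$ computed in Section 2. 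Integrating over the torus gives $I(V,V)<0$. One then checks that this same $V$ (being normal) also defeats the area index form, giving instability as a minimal embedding simultaneously.

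\textbf{Main obstacle.} The delicate point is the quantitative stability for fibres in $U_0\cup U_1$ rather than at $p_0,p_1$: one must show that the small but nonzero positive curvature contribution is genuinely dominated by the Dirichlet energy of \emph{every} admissible variation field, which requires pairing the precise $O(|w_1|^2)$ decay rate of the offending curvature components against a sharp spectral/Poincar\'e constant on the flat torus and, for the harmonic-map case, handling the tangential components of $V$ where the totally-geodesic reduction still leaves curvature terms that are not sign-definite a priori. Getting the size of $U_0,U_1$ and the algebra of the curvature expansion to line up is where the real computation lives; everything else is bookkeeping on top of Lemma \ref{curvature} and Theorem \ref{main1}.
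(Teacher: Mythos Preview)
Your stability argument is essentially the paper's: the key algebraic identity is that, writing $V=\sum_i a_i\,\partial/\partial x_i$ and $V_1=\sum_i a_i\nabla_{\partial/\partial x_0}\partial/\partial x_i$, one has $|V_1|^2_g = R(\partial f_p/\partial x_0,V,\partial f_p/\partial x_0,V)$ \emph{exactly} (this is the ``control of curvature by covariant derivatives'' you allude to). So after cancellation the index integrand becomes $|V_2|^2_g+2g(V_1,V_2)$ with $V_2=\sum_i(\partial a_i/\partial x_0)\partial/\partial x_i$, and it is the cross term --- of size $O(|w_1|^2)$ --- that is absorbed via Poincar\'e. Your description ``curvature is $O(|w_1|^2)$, absorb into the Dirichlet energy'' is not quite the mechanism (the curvature itself is not small compared to the full $|\nabla V|^2$; only the residual cross term is), but the computation you would run is the same one.

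The instability half, however, has a genuine gap. Your proposed test fields --- constant-coefficient normal fields $V=c_1\partial/\partial x_1+c_3\partial/\partial x_3$ --- give $I(V,V)=0$, not $I(V,V)<0$. This is precisely because of the identity above: with $a_i$ constant one has $V_2=V_4=0$, so $|\nabla_{\partial/\partial x_0}V|^2=|V_1|^2=R(\partial f_p/\partial x_0,V,\partial f_p/\partial x_0,V)$ and similarly in $x_2$, and the integrand vanishes identically regardless of where $p$ sits. (Put differently, the sectional curvatures $R_{0101},R_{0303},R_{2121},R_{2323}$ are all \emph{nonnegative} by Lemma~\ref{curvature}, so ``$-R$ becomes a negative constant'' is false in sign.)

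What the paper does instead is take a \emph{rotating} field
\[
V=\tfrac{1}{2\pi}\cos(2\pi x_0)\,\partial/\partial x_1+\tfrac{1}{2\pi}\sin(2\pi x_0)\,\partial/\partial x_3,
\]
so that $a_3\,\partial a_1/\partial x_0-a_1\,\partial a_3/\partial x_0=-1/(2\pi)$ is a nonzero constant and the cross term contributes $-\frac{1}{\pi}g(\nabla_{\partial/\partial x_0}\partial/\partial x_3,\partial/\partial x_1)$. At $x_1=0$ this equals $-\frac{1+\epsilon^2}{4A^2}x_3^2$, which beats the $|V_2|^2$ contribution $\frac{1}{2}g_{11}=\frac{1}{2}(\frac{1}{4A}+\frac{3+\epsilon^2|w_1|^2}{4A^2})$ exactly when $|x_3|>\sqrt{2}$; the $x_2$-piece vanishes because $\partial a_i/\partial x_2=0$. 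The $\sqrt{2}$ threshold thus comes from balancing $|V_2|^2$ against the cross term, not from a sign change in the curvature. For the minimal-surface instability one then invokes the Ejiri--Micallef inequality $i_E\le i_A$ rather than reusing $V$ directly.
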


Let ${\mathbf E}^{\mathbb{C}}_p=f^{*}_pT(\S^{3}\times \S^{1})\otimes \mathbb{C}$ be the complexified pullback bundle over $\T^2_p$.  There is a unique holomorphic structure $\overline{\partial }$ on ${\mathbf E}^{\mathbb{C}}_p$ (for more detail, see p.21). Let $H_{\overline{\partial}}^{0}(\T_p^{2}, \mathbf{E}_{p}^{\mathbb{C}})$ be the linear space of holomorphic sections of ${\mathbf E}^{\mathbb C}_p$.  A useful curvature property is that $R(W,\frac{\partial f_p}{\partial w_0}, \overline{W}, \overline{\frac{\partial f_p}{\partial w_0}})\geq 0$ for any smooth section $W$ of ${\mathbf E}_p^\C$, which arises in the complexified second variation formula. As an application of Theorem \ref{main3}, we have
\begin{theo}\label{main4}
Let $U_0,U_1$ be the two neighbourhoods in Theorem \ref{main3} and $p\in U_{0}\cup U_1$.  If $p\neq p_{0},p_{1}$ then $H_{\overline{\partial}}^{0}(\T_p^{2}, {\mathbf E}^{\mathbb{C}}_p)=\text{Span}_\C \left\{\frac{\partial}{\partial x_{0}}, \frac{\partial}{\partial x_{2}}\right\}$. If $p=p_{0}, p_{1}$ then $H_{\overline{\partial}}^{0}(\T_p^{2}, {\mathbf E}^{\mathbb{C}}_p)=\text{Span}_\C \left\{\frac{\partial}{\partial x_{0}}, \frac{\partial}{\partial x_{2}},\frac{\partial}{\partial x_{1}}, \frac{\partial}{\partial x_{3}}\right\}.$
\end{theo}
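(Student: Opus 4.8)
The plan is to compute the space of holomorphic sections of $\mathbf{E}^{\mathbb C}_p$ directly, by expressing the $\overline{\partial}$-operator in terms of the flat coordinates $x_0, x_1, x_2, x_3$ on $\mathbb C^2\times\mathbb C$ pulled back to $\T^2_p$. First I would recall the splitting $\mathbf{E}^{\mathbb C}_p = f_p^*T^{1,0}(\S^3\times\S^1)\oplus f_p^*T^{0,1}(\S^3\times\S^1)$; since the holomorphic structure on $\mathbf{E}^{\mathbb C}_p$ is the one induced by the inclusion being holomorphic (so that $\overline{\partial}$ on sections is expressed through the $(0,1)$-part of the pulled-back Chern connection of $g_\epsilon$), a section is holomorphic precisely when it is $\overline{\partial}$-parallel along $\T^2_p$. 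Working in the frame $\{\partial/\partial x_0,\partial/\partial x_1,\partial/\partial x_2,\partial/\partial x_3\}$, the equation $\overline{\partial}W=0$ becomes a linear first-order system $\partial_{\overline{w_0}} W = A\,W$ for a connection matrix $A$ built from the Christoffel symbols of $g_\epsilon$ restricted to the fibre, which were already computed in Section 3 for the proof of Lemma \ref{curvature} and Theorem \ref{main3}.

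The key observation is that on a fibre $\T^2_p$ the coordinate $w_1=x_1+\i x_3$ is \emph{constant}, so the restricted metric and its Christoffel symbols depend only on $w_0$ and on the fixed value of $w_1$ (equivalently on the fixed point $p$). The vectors $\partial/\partial x_0,\partial/\partial x_2$ are tangent to the fibre; since $\T^2_p$ is a flat torus that is totally geodesic in $(\S^3\times\S^1,g_\epsilon)$ (used already in Section 3), the tangential part of the connection is trivial in these coordinates, so $\partial/\partial x_0$ and $\partial/\partial x_2$ (equivalently $\partial/\partial w_0$ and $\partial/\partial\overline{w_0}$) are always $\overline{\partial}$-holomorphic sections — this gives the two sections present for every $p\in U_0\cup U_1$. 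The remaining task is to determine when the normal directions $\partial/\partial x_1,\partial/\partial x_3$ (spanning $f_p^*N\T^2_p$) extend to holomorphic sections, i.e.\ when the corresponding components of the system $\partial_{\overline{w_0}}W=A\,W$ admit globally defined (doubly periodic) solutions on $\T^2_p$.

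For this I would analyze the normal-bundle part of the operator. The off-diagonal (normal) connection coefficients are governed by the second fundamental form of $\T^2_p$, which in turn is controlled by the same curvature quantity $R(X,Y,\overline X,\overline Y)$ appearing in Theorem \ref{main1}(3): at $p=p_0,p_1$ the relevant curvature and the obstruction to extending the normal frame vanish identically on the fibre, so $\partial/\partial x_1,\partial/\partial x_3$ become genuine holomorphic sections, yielding the four-dimensional space; for $p\neq p_0,p_1$ in $U_0\cup U_1$, the connection term in the normal direction is nonzero and one shows — by integrating around the two generating cycles of $\T^2_p$ and examining the resulting holonomy, or equivalently by showing the only periodic solution of the normal ODE system is zero — that no nontrivial holomorphic normal section exists, so the space is exactly $\mathrm{Span}_\C\{\partial/\partial x_0,\partial/\partial x_2\}$. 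The inequality $R(W,\partial f_p/\partial w_0,\overline W,\overline{\partial f_p/\partial w_0})\geq 0$ quoted just before the theorem enters here: it forces any holomorphic section to be covariantly constant (a Bochner-type vanishing, since on the flat torus $\int_{\T^2_p}|\overline{\partial}W|^2$ relates to $\int_{\T^2_p}(|\nabla W|^2 - R(W,\cdots))$), which reduces the problem to parallel sections and makes the holonomy computation decisive.

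The main obstacle I anticipate is the normal-direction holonomy computation for $p\neq p_0,p_1$: one must show rigorously that the restricted Chern connection on $f_p^*N\T^2_p$ has no nonzero $\overline{\partial}$-parallel section, which requires the explicit (and somewhat involved) expressions for the Christoffel symbols of $g_\epsilon$ from Section 3 and a careful check that the relevant period integral is nonzero exactly off the two special tori — this is where all the delicate structure of the metric, rather than soft arguments, is needed.
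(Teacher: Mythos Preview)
Your proposed holonomy computation is not needed, and the paper's argument is much shorter because it uses the stability of $f_p$ (Theorem~\ref{main3}) as the decisive input rather than as background. The proof runs as follows: for any section $W$ the complexified index form reads
\[
I(W,W)=2\ii\int_{\T^2_p}\Big\{\big|\nabla_{\partial/\partial\overline{w}_0}W\big|^2-R\big(W,\tfrac{\partial f_p}{\partial w_0},\overline{W},\overline{\tfrac{\partial f_p}{\partial w_0}}\big)\Big\}\,dw_0\wedge d\overline{w}_0,
\]
so if $W$ is holomorphic the first term drops out and $I(W,W)=-\int R(\cdots)\le 0$ by the curvature inequality; but stability gives $I(W,W)\ge 0$, hence $R(W,\tfrac{\partial f_p}{\partial w_0},\overline{W},\overline{\tfrac{\partial f_p}{\partial w_0}})\equiv 0$ pointwise. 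The explicit formula for this quantity (computed from Lemma~\ref{curvature}) is a positive multiple of $(|a_1|^2+|a_3|^2)(x_1^2+x_3^2)^2$, so for $p\neq p_0,p_1$ one gets $a_1=a_3=0$ immediately. Since $\Gamma^k_{00}=\Gamma^k_{02}=\Gamma^k_{22}=0$, the holomorphicity equation then reduces to $\partial_{\overline{w}_0}a_0=\partial_{\overline{w}_0}a_2=0$, forcing $a_0,a_2$ constant. At $p_0,p_1$ one checks directly that all $\Gamma^k_{0j},\Gamma^k_{2j}$ vanish, so every $\partial/\partial x_j$ is holomorphic. No ODE or monodromy analysis enters.

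Your ``Bochner-type'' remark is morally on target---the identity you describe is exactly the real-versus-complex form of $I(W,W)$---but you never invoke the inequality $I(W,W)\ge 0$, and without it the argument does not close: curvature nonnegativity alone does not force a holomorphic section to be parallel. Two further corrections: the holomorphic structure on $\mathbf E^{\mathbb C}_p$ comes from the \emph{Levi--Civita} connection of $g_\epsilon$, not the Chern connection (these differ since $g_\epsilon$ is non-K\"ahler); and the normal connection coefficients are not governed by the second fundamental form---the latter vanishes identically because the fibres are totally geodesic, yet the relevant symbols $\Gamma^k_{0j},\Gamma^k_{2j}$ for $j=1,3$ are nonzero away from $p_0,p_1$.
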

Theorem \ref{main4} is concluded from knowing the harmonic mappings $f_p$ are stable already. The usual practice goes in the opposite direction: estimating the Morse index from existing holomorphic (or almost holomorphic) sections, see \cite{MM}, \cite{F}, \cite{FW}. On the other hand, stability together with positivity (nonnegativity) of curvature of the ambient manifold force rigidity results for harmonic maps in many situations, as demonstrated, for example, in \cite{AL}, \cite{AM}, \cite{BBBR}, \cite{C, C1}, \cite{F}, \cite{FW}, \cite{Le}, \cite{LS}, \cite{M}, \cite{MM}, \cite{MWo}, \cite{OU}, \cite{ScY}, \cite{Siu-Yau}, \cite{Xin}, et al. We also include in Appendix a known result for $\S^3\times\S^1$ equipped with the product of standard metrics.

\vspace{.1cm}

In Section 4, Theorem \ref{main3} is generalized to $\S^{2n-1}\times\S^1$. 

\vspace{.2cm}
{\it Acknowledgements. }
This work was carried out while J. Chen was partially supported by an NSERC Discovery Grant (22R80062) and  L. Huang was visiting the Department of Mathematics at the University of British Columbia, supported by the China Scholarship Council (File No. 201906340217). Huang would like to thank UBC for the hospitality and support. 

Both authors are grateful to R. Slobodeanu for pointing out to us the closely related work \cite{SS}, in which, among other things, similar issues on stability of the fibres of Hopf surfaces with Vaisman metrics was studied. Unlike in \cite{SS}, our metrics are not Vaisman and we observe both stability and instability for the same Hopf surface.

\section{Construction of a family of Hermitian metrics on $\S^3\times \S^1$}

In this section, we construct a family of Hermitian metrics on the Hopf surface $\S^3\times\S^1$ from Hermitian metrics on the Calabi-Eckmann complex 3-manifold $\S^3\times\S^3$ via an adiabatic limit type procedure, i.e., scaling down a factor of a product. We then explore properties of these metrics and explain why they are not those constructed in \cite{GO}.

\subsection{Complex structures on $\S^{3}\times \S^{3}$ and $\S^3\times \S^1$}

We recall Calabi-Eckmann's construction of an integrable complex structure on a product of two odd dimensional unit spheres and follow the notations in \cite{CE}. We only consider $\S^{3}\times \S^{3}$ and $\S^3\times \S^1$. Let $E$ and $E'$ be the complex Euclidean space $\mathbb C^2$ and $\mathbb C^p$ respectively, where $p=1,2$, with complex affine coordinates $z_{0}, z_{1}$ on $E$ and $z'_{0},z'_1$ on $E'$ for $p=2$ and $z'_0$ for $p=1$. Let $V_{\alpha\beta}$ be the open subset of $\S^3\times \S^{2p-1}$ defined by
$$
V_{\alpha\beta}=\left\{(z_0,z_1, z_0',z'_{p-1}): (z_0,z_1)\in\S^3\subset\C^2, (z'_0,z'_{p-1})\in\S^{2p-1}\subset\C^{p}, z_{\alpha}z'_{\beta}\neq 0\right\}
$$
 where $\alpha = 0,1$ and $ \beta = 0, p-1$, with $(z'_0,z'_{p-1}):=z'_0$ for $p=1$. The family $\{V_{\alpha\beta}\}$ is an open cover of $\S^3\times \S^{2p-1}$. On $V_{\alpha\beta}$, with $\alpha, j=0,1$ and $\beta, k=0, p-1$ define 
\begin{eqnarray}
w_{\alpha j }\ &=&\ \frac{z_{j}}{z_{\alpha}},\,\,\,\,\,\,j\neq \alpha \label{1}\\
 w'_{\beta k}\ &=&\ \frac{z'_{k}}{z'_{\beta}},\,\,\,\,\,\,\,k\neq\beta\label{2}\\
t_{\alpha\beta}\ &=&\ \frac{1}{2\pi\ii}\left(\log z_{\alpha}+\sqrt{-1}\log z'_{\beta}\right) \mod(1,\ii). \label{3}
\end{eqnarray}
Let $\T^2$ be the square torus $\C/\{1,\ii\}$ with a coordinate chart defined by \eqref{3}. When $p=2$, the pair of complex numbers $(w_{\alpha j},w'_{\beta k })$ is a local inhomogeneous coordinates of $\mathbb CP^{1}\times \mathbb CP^{1}$ and $(w_{\alpha j},w'_{\beta k},t_{\alpha\beta})$ is a differentiable map from $V_{\alpha\beta}$ into $\C^2\times\T^2$; when $p=1$, \eqref{2} is vacuous and $\C P^1\times\C P^0:=\C P^1$ so $w_{\alpha j}$ is a coordinate of $\C P^1$ and $(w_{\alpha j}, t_{\alpha 0})$ maps  $V_{\alpha\beta}$ differentially into $\mathbb C^{1}\times \mathbb T^{2}$.  Furthermore, as shown in \cite[p.496]{CE}, given any  $w_{\alpha j},w'_{\beta k}\in\C$ and a point $[t_{\alpha\beta}]\in\T^2$ which is represented by a complex number $t_{\alpha\beta}$ in the congruence class of the lattice $\text{mod} \,(1,\ii)$, the quadruple $z_\alpha,z_j,z'_\beta,z'_k$ are uniquely solvable in $V_{\alpha\beta}$. In fact

\begin{prop}(Calabi-Eckmann) \label{complex coordinates}
	Each $V_{\alpha\beta}$ is homeomorphic to $\mathbb C^{p}\times \mathbb T^{2}$. On $U_{\alpha\beta}=\{(w_{\alpha j},w'_{\beta k}, t_{\alpha\beta})\in\C^{p+1}: 0<\mathfrak{Re} \,t_{\alpha\beta},\mathfrak{Im}\, t_{\alpha\beta}<1\}\subset V_{\alpha\beta}$,  $(w_{\alpha j},  w'_{\beta k}, t_{\alpha\beta})$
	is a complex coordinate system of $V_{\alpha\beta}\subset\S^3\times \S^{2p-1}$. For this complex structure, the fibre bundle $\S^3\times \S^{2p-1}\to \mathbb CP^{1}\times \mathbb CP^{p-1}$ is complex analytic and each fibre is a holomorphic nonsingular torus.
\end{prop}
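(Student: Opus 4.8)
The plan is to follow Calabi--Eckmann's argument, making the coordinate changes explicit; I would split it into three steps. \textbf{Step 1: $(w_{\alpha j},w'_{\beta k},t_{\alpha\beta})$ is a homeomorphism $V_{\alpha\beta}\cong\C^p\times\T^2$.} The map is smooth, hence continuous, on $V_{\alpha\beta}$, so it suffices to produce a continuous inverse. On $\S^3$ one has $|z_\alpha|^2(1+|w_{\alpha j}|^2)=1$, so $|z_\alpha|$ is determined by $w_{\alpha j}$, and likewise $|z'_\beta|$ by $w'_{\beta k}$ (with $|z'_\beta|=1$ when $p=1$). Taking a representative $t_{\alpha\beta}=s+\i\tau$, $s,\tau\in\R$, and separating real and imaginary parts of $2\pi\i\, t_{\alpha\beta}=\log z_\alpha+\i\log z'_\beta$ yields
\[ \arg z_\alpha = 2\pi s-\log|z'_\beta|,\qquad \arg z'_\beta = 2\pi\tau+\log|z_\alpha|, \]
so $(s,\tau)\mapsto(\arg z_\alpha,\arg z'_\beta)$ is $2\pi$ times the identity followed by a translation and therefore carries the period lattice $\mathbb{Z}+\i\mathbb{Z}$ of the $t_{\alpha\beta}$-torus onto the ambiguity lattice $(2\pi\mathbb{Z})^2$ of the arguments. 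Hence $z_\alpha,z'_\beta$, and then $z_j=w_{\alpha j}z_\alpha$, $z'_k=w'_{\beta k}z'_\beta$, are unambiguously determined by, and depend continuously on, $(w_{\alpha j},w'_{\beta k},[t_{\alpha\beta}])\in\C^p\times\T^2$; this is the unique solvability recalled above from \cite{CE}, and it exhibits the desired homeomorphism. Since the $V_{\alpha\beta}$ are open in the manifold $\S^3\times\S^{2p-1}$, each is an open submanifold homeomorphic to $\C^p\times\T^2$.

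\textbf{Step 2: the $U_{\alpha\beta}$ form a holomorphic atlas.} On $U_{\alpha\beta}$ the $\T^2$-valued $t_{\alpha\beta}$ lifts to the open square $\{0<\mathfrak{Re}\,t_{\alpha\beta},\,\mathfrak{Im}\,t_{\alpha\beta}<1\}\subset\C$, so $(w_{\alpha j},w'_{\beta k},t_{\alpha\beta})$ is a genuine $\C^{p+1}$-valued chart, and finitely many lattice translates of $U_{\alpha\beta}$ cover $V_{\alpha\beta}$, with transition maps $t\mapsto t+\lambda$ ($\lambda\in\mathbb{Z}+\i\mathbb{Z}$), which are biholomorphic. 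On an overlap $V_{\alpha\beta}\cap V_{\alpha'\beta'}$ the $w$-coordinates transform by the usual $\C P^1$ and $\C P^{p-1}$ transition maps (e.g. $w_{1j}\mapsto 1/w_{0j}$ on $\{z_0z_1\neq0\}$), while from \eqref{3}
\[ t_{\alpha'\beta'}-t_{\alpha\beta}=\frac1{2\pi\i}\log\frac{z_{\alpha'}}{z_\alpha}+\frac1{2\pi}\log\frac{z'_{\beta'}}{z'_\beta}, \]
in which the ratios $z_{\alpha'}/z_\alpha$ and $z'_{\beta'}/z'_\beta$ are nowhere-vanishing holomorphic functions of the $w$'s on the overlap and the multivaluedness of the logarithms contributes only elements of $\mathbb{Z}+\i\mathbb{Z}$, killed by the reduction $\bmod\,(1,\i)$. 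Hence all transition functions are holomorphic, $(w_{\alpha j},w'_{\beta k},t_{\alpha\beta})$ is a holomorphic coordinate system, and the resulting integrable complex structure is the Calabi--Eckmann one.

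\textbf{Step 3: the fibration and its fibres.} In these coordinates the map $\S^3\times\S^{2p-1}\to\C P^1\times\C P^{p-1}$, $(z,z')\mapsto([z_0{:}z_1],[z'_0{:}z'_{p-1}])$, reads $(w_{\alpha j},w'_{\beta k},t_{\alpha\beta})\mapsto(w_{\alpha j},w'_{\beta k})$, i.e. over $U_{\alpha\beta}$ it is the product projection $\C^p\times\T^2\to\C^p$; it is therefore holomorphic and a holomorphically locally trivial fibre bundle. Each fibre is $\{w_{\alpha j}=\text{const},\ w'_{\beta k}=\text{const}\}$, cut out locally by holomorphic equations with independent differentials, hence a $1$-dimensional complex submanifold biholomorphic to the elliptic curve $\C/(\mathbb{Z}+\i\mathbb{Z})$, a nonsingular complex torus.

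\textbf{Expected main obstacle.} There is no deep difficulty beyond the bookkeeping in Steps 1--2. The one point needing genuine care is the compatibility of the three lattices in play---the period lattice $\{1,\i\}$ of $t_{\alpha\beta}$, the $2\pi\mathbb{Z}$-ambiguities of $\arg z_\alpha,\arg z'_\beta$, and the $2\pi\i\mathbb{Z}$-branch ambiguities of $\log z_\alpha,\log z'_\beta$: they fit together precisely because in \eqref{3} the coefficient of $\log z'_\beta$ is $\i$ and the reduction is $\bmod\,(1,\i)$, so a branch change of $\log z_\alpha$ (resp. $\log z'_\beta$) shifts $t_{\alpha\beta}$ by $1$ (resp. $\i$). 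Checking that this renders every formula above well defined, choice-independent, and holomorphic across charts is the crux.
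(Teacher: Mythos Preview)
Your proposal is correct and follows precisely the Calabi--Eckmann argument. Note, however, that the paper does not give its own proof of this proposition: it is stated as a result of Calabi--Eckmann with attribution, and the unique solvability you reproduce in Step~1 is explicitly referenced back to \cite[p.~496]{CE} in the paragraph preceding the statement. Your three steps---explicit inversion via separating real and imaginary parts of $2\pi\ii\,t_{\alpha\beta}=\log z_\alpha+\ii\log z'_\beta$, verification that the transition functions are holomorphic (both the lattice translates within a single $V_{\alpha\beta}$ and the projective-plus-logarithm changes across different $V_{\alpha\beta}$), and the identification of the fibration as a coordinate projection---constitute exactly the standard proof, and your bookkeeping of the three interacting lattices in the ``expected obstacle'' paragraph is the genuine content. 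There is nothing to compare against in the paper itself beyond the citation.
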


\subsection{Hermitian Metrics on $\S^{3}\times \S^{3}$} 
Let $U_{\alpha\beta}$ be the coordinate neighbourhood in Proposition \ref{complex coordinates} which forms an open coordinate cover of $\S^3\times\S^3$.
Without loss of generality, we consider $U_{00}$ on which we set $$(w_0,w_1,w_2)=(t_{00}, w_{01},w'_{01})$$ for simplicity. The inclusion map $$\vp:\S^{3}\times \S^{3}\rightarrow \C^2\times \C^2$$ can be written locally in these coordinates as
\begin{equation}\label{3d map}
\left\{\begin{aligned}
z_{0}\ &=\ A^{-\frac{1}{2}}e^{\ii(\pi(w_{0}+\overline{w}_{0})+\frac{1}{2}\log B)},\\
z'_{0}\ &=\ B^{-\frac{1}{2}}e^{\pi(w_{0}-\overline{w}_{0})-\frac{1}{2}\ii\log A},\\
z_{1}\ &=\ z_{0}w_{1}=w_{1}A^{-\frac{1}{2}}e^{\ii(\pi(w_{0}+\overline{w}_{0})+\frac{1}{2}\log B)},\\
z'_{1}\ &=\ z'_{0}w_{2}=w_{2}B^{-\frac{1}{2}}e^{\pi(w_{0}-\overline{w}_{0})-\frac{1}{2}\ii\log A}
\end{aligned}\right.
\end{equation}
where
 \begin{equation}\label{A,B}
 A=1+|w_{1}|^{2}, \,\,\,\,B=1+|w_{2}|^{2}.
 \end{equation}

\vspace{.1cm}

To explicitly write down Hermitian metrics on $\S^3\times\S^3$, we compute $\partial\iota$:
\begin{equation*}
\small{
\left\{\begin{aligned}
\frac{\partial z_{0}}{\partial w_{0}}&=\ii \pi z_{0},  &\frac{\partial z'_{0}}{\partial w_{0}}&= \pi z'_{0},
&\frac{\partial z_{1}}{\partial w_{0}}&=\ii \pi z_{1},  &\frac{\partial z'_{1}}{\partial w_{0}}&= \pi z'_{1}, \\
\frac{\partial z_{0}}{\partial w_{1}}&=-\frac{1}{2}\frac{\overline{w}_{1}z_{0}}{A}, &\frac{\partial z'_{0}}{\partial w_{1}}&=-\frac{\ii}{2}\frac{\overline{w}_{1}z'_{0}}{A},&\frac{\partial z_{1}}{\partial w_{1}}&=z_{0}-\frac{1}{2}\frac{\overline{w}_{1}z_{1}}{A},\
&\frac{\partial z'_{1}}{\partial w_{1}}&=-\frac{\ii}{2}\frac{\overline{w}_{1}z'_{1}}{A},\\
\frac{\partial z_{0}}{\partial w_{2}}&=\frac{\ii}{2}\frac{\overline{w}_{2}z_{0}}{B}, &\frac{\partial z'_{0}}{\partial w_{2}}&=-\frac{1}{2}\frac{\overline{w}_{2}z'_{0}}{B}, &\frac{\partial z_{1}}{\partial w_{2}}&=\frac{\ii}{2}\frac{\overline{w}_{2}z_{1}}{B}, &\frac{\partial z'_{1}}{\partial w_{2}}&=z'_{0}-\frac{1}{2}\frac{\overline{w}_{2}z'_{1}}{B}.
\end{aligned}\right.}
\end{equation*}

\vspace{.1cm}

Now, we take Hermitian metrics on $\C^2\times \C^2$ defined by
$$
h_{\epsilon}=\displaystyle\sum_{i=0,1}dz_{i}\otimes d\overline{z}_{i}+\epsilon^{2}\displaystyle\sum_{i=0,1}dz'_{i}\otimes d\overline{z}'_{i},\,\,\,\,\epsilon\searrow 0.
$$
For any $X\in T^{1,0}(\S^3\times\S^3)$, let $\vp_*(X)^{1,0}\in T^{1,0}(\C^2\times\C^2)$ be the $(1,0)$ part of the push forward $\vp_*X$. Then
$$
\tilde{g}_\epsilon(X,Y) = h_\epsilon\left(\vp_*(X)^{1,0},\vp_*(Y)^{1,0}\right),\,\,\,\,X,Y\in T^{1,0}(\S^3\times\S^3)
$$
is a Hermitian metric on the complex manifold $\S^3\times\S^3$. Its components
$$
\tilde{g}_{\epsilon, i\overline{j}}=h_{\epsilon}\left(\vp_{*}\left(\frac{\partial}{\partial w_{i}}\right)^{1,0}, \vp_{*}\left(\frac{\partial}{\partial w_{j}}\right)^{1,0}\right)
$$ 
are given by the Hermitian matrix
\begin{equation}\label{induced metric}
\big(\tilde{g}_{\epsilon,i\overline{j}}\big)=\left(
\begin{matrix}
(1+\epsilon^{2})\pi^2& \ \ \frac{(1+\epsilon^{2})\ii \pi }{2}\frac{w_{1}}{A} & \ \  \frac{(1+\epsilon^{2})\pi}{2}\frac{w_{2}}{B}\\[3mm]
- \frac{(1+\epsilon^{2})\ii\pi}{2}\frac{\overline{w}_{1}}{A}& \ \ \frac{1}{4A}+\frac{3}{4A^2}+\frac{\epsilon^{2}}{4}\frac{|w_{1}|^2}{A^2}& \ \ \frac{-(1+\epsilon^{2})\ii}{4} \frac{\overline{w}_{1}w_{2}}{AB}\\[3mm]
\frac{(1+\epsilon^{2})\pi}{2}\frac{\overline{w}_{2}}{B}& \ \ \frac{(1+\epsilon^{2})\ii}{4} \frac{\overline{w}_{2}w_{1}}{AB} &\ \ \frac{|w_{2}|^{2}}{4B^2}+\frac{\epsilon^{2}}{4B}+\frac{3\epsilon^{2}}{4B^2}\\
\end{matrix}\right).
\end{equation}
For example,  
\begin{equation*}
\begin{split}
\vp_{*}\left(\frac{\partial}{\partial w_{0}}\right)^{1,0}=\sum_{i=0,1}\frac{\partial z_{i}}{\partial w_{0}}\frac{\partial}{\partial z_{i}}+\sum_{i=0,1}\frac{\partial z'_{i}}{\partial w_{0}}\frac{\partial}{\partial z'_{i}}
\end{split}
\end{equation*}
 and
\begin{equation*}
\begin{split}
\tilde{g}_{\epsilon}\left(\frac{\partial }{\partial w_{0}},\frac{\partial }{\partial w_{0}}\right)&=h_{\epsilon}\left(\vp_{*}\left(\frac{\partial}{\partial w_{0}}\right)^{1,0}, \vp_{*}\left(\frac{\partial}{\partial w_{0}}\right)^{1,0}\right)\\
&=\sum_{i=0,1}\left|\frac{\partial z_{i}}{\partial w_{0}}\right|^{2}+\epsilon^{2}\sum_{i=0,1}\left|\frac{\partial z'_{i}}{\partial w_{0}}\right|^{2}\\
&=\left(1+\epsilon^{2}\right)\pi^{2}.
\end{split}
\end{equation*}

For $\epsilon>0$, $\tilde{g}_\epsilon$ induces a Hermitian metric ${g}_\epsilon$ on $\S^3\times\S^1$ given in the coordinates $(w_0,w_1)$ by

\begin{equation}\label{induced metric 1}
({g}_{\epsilon,i\overline{j}})=\left(
\begin{matrix}
(1+\epsilon^{2})\pi^2& \ \ \frac{(1+\epsilon^{2})\ii\pi}{2} \frac{w_1}{A} \\[3mm]
- \frac{(1+\epsilon^{2})\ii\pi}{2}\frac{\overline{w}_{1}}{A}& \ \ \frac{1}{4A}+\frac{3}{4A^2}+\frac{\epsilon^{2}}{4}\frac{|w_{1}|^2}{A^2}\\[3mm]
\end{matrix}\right).
\end{equation}

When $\epsilon=0$ the symmetric 2-tensor $\tilde{g}_0$ is nonnegative but not positive definite; however, we will show that it still yields a Hermitian metric ${g}_0$ on $\S^3\times\S^1$.

\begin{prop}\label{submanifold}
With respect to the Calabi-Eckmann complex structure, the complex surface $\S^{3}\times \S^{1}$ is a complex submanifold of $\S^{3}\times \S^{3}$. The 2-tensor ${g}_0$, defined by ${g}_0(X,Y)=h_0\big(\vp_*(X)^{1,0},\vp_*(Y)^{1,0}\big)$ for $X,Y\in T^{1,0}(\S^3\times\S^3)$, is a Hermitian metric on $\S^{3}\times \S^{1}$.
\end{prop}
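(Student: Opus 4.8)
The plan is to prove the two assertions separately. For the first, I would realize $\S^3\times\S^1$ as the preimage of a holomorphic slice under the Calabi--Eckmann fibration; for the second, I would show that the pulled-back semidefinite form $h_0$ becomes nondegenerate on this submanifold by a direct computation in the Calabi--Eckmann charts.

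For the submanifold claim, consider the complex analytic fibre bundle $\pi':\S^3\times\S^3\to\C P^1\times\C P^1$ of Proposition \ref{complex coordinates}, which as a smooth map is $(z_0,z_1,z'_0,z'_1)\mapsto([z_0:z_1],[z'_0:z'_1])$. With $q_0=[1:0]$ in the second factor, $\C P^1\times\{q_0\}$ is a complex submanifold of $\C P^1\times\C P^1$, and since $\pi'$ is a holomorphic submersion, $\Sigma:=(\pi')^{-1}(\C P^1\times\{q_0\})$ is a complex submanifold of $\S^3\times\S^3$; explicitly, $\Sigma$ is the locus $z'_1=0$ in $\S^3\times\S^3$, i.e., $\S^3\times\{(z'_0,0):|z'_0|=1\}\cong\S^3\times\S^1$. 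To see that $\Sigma$ carries the Calabi--Eckmann complex structure of $\S^3\times\S^1$, I would compare charts: the charts $V_{\alpha 1}$ (where $z'_1\ne 0$) miss $\Sigma$, and on $V_{00}$, where $(w_0,w_1,w_2)=(t_{00},w_{01},w'_{01})$, one has $\Sigma\cap V_{00}=\{w_2=0\}$ with residual holomorphic coordinates $(w_0,w_1)=(t_{00},w_{01})$, exactly the Calabi--Eckmann coordinates \eqref{1} and \eqref{3} of $\S^3\times\S^1$ (the case $p=1$); the same holds on $V_{10}$, and since $|z'_0|\equiv 1$ on $\Sigma$ these two charts cover it.

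By part (1), $T^{1,0}(\S^3\times\S^1)$ is a holomorphic subbundle of $T^{1,0}(\S^3\times\S^3)|_{\S^3\times\S^1}$, so $g_0$ restricts to a tensor on $\S^3\times\S^1$ that is visibly smooth and Hermitian symmetric and satisfies $g_0(X,X)=\|\vp_*(X)^{1,0}\|_{h_0}^{2}\ge 0$, since $h_0$ is a positive semidefinite Hermitian form on $\C^2\times\C^2$. What remains is positive definiteness, which I would check by computation. Work in $V_{00}$, with holomorphic coordinates $(w_0,w_1)=(t_{00},w_{01})$, which together with $V_{10}$ covers $\S^3\times\S^1$. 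Since $\partial_{w_0},\partial_{w_1}$ are tangent to $\Sigma=\{w_2=0\}$ and $\partial z'_1/\partial w_0=\partial z'_1/\partial w_1=0$ there (both being multiples of $z'_1=z'_0w_2\equiv 0$), discarding the $\epsilon^2\sum dz'_i\otimes d\bar z'_i$ part of $h_\epsilon$ changes nothing, so the components $g_{0,i\bar j}$ ($i,j\in\{0,1\}$) equal the matrix \eqref{induced metric 1} at $\epsilon=0$. Using the sphere relation $|w_1|^2=A-1$ (equivalently $|z_0|^2=A^{-1}$ on $\S^3$) one then obtains
\begin{equation*}
\det\big(g_{0,i\bar j}\big)=\pi^{2}\Big(\tfrac{1}{4A}+\tfrac{3}{4A^{2}}\Big)-\tfrac{\pi^{2}}{4}\,\tfrac{|w_1|^{2}}{A^{2}}=\frac{\pi^{2}}{A^{2}}>0,
\end{equation*}
while $g_{0,0\bar 0}=\pi^2>0$, so $(g_{0,i\bar j})$ is positive definite on $V_{00}$; the same computation with $z_0$ and $z_1$ interchanged gives positive definiteness on $V_{10}$. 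Hence $g_0$ is a Hermitian metric on $\S^3\times\S^1$. (In fact the same computation yields $\det(g_{\epsilon,i\bar j})=(1+\epsilon^2)\pi^2/A^2$ for every $\epsilon\ge 0$.)

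The step I expect to be the main obstacle is this last positive-definiteness check: restricting the degenerate form $h_0$ could a priori leave a degenerate tensor, and nondegeneracy is rescued only by the cancellation $\tfrac{1}{4A}+\tfrac{3}{4A^{2}}-\tfrac{|w_1|^{2}}{4A^{2}}=\tfrac{1}{A^{2}}$ produced by the sphere constraint $A=1+|w_1|^2$. The fibration argument and the chart identification in the first part are, by comparison, routine bookkeeping with the Calabi--Eckmann coordinates of Section 2.1.
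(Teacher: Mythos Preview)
Your proposal is correct and follows essentially the same approach as the paper: both identify $\S^3\times\S^1$ with the locus $\{z'_1=0\}=\{w_2=0\}$ in $\S^3\times\S^3$ and verify that the Calabi--Eckmann charts match (the paper phrases this via the explicit holomorphic embedding $\tau(w_0,w_1)=(w_0,w_1,0)$, you via the preimage of the holomorphic submersion $\pi'$), and both then restrict the $3\times 3$ matrix \eqref{induced metric} at $\epsilon=0$ to the $2\times 2$ block in $(w_0,w_1)$. Your explicit determinant computation $\det(g_{0,i\bar j})=\pi^2/A^2$ is in fact more detailed than the paper, which simply asserts positive definiteness of the displayed $2\times 2$ matrix.
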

\begin{proof}
	Consider the map $\tau:\S^{3}\times \S^{1}\rightarrow \S^{3}\times \S^{3}$ given by $\tau(z_{0},z_{1}, z'_{0})=(z_{0},z_{1}, z'_{0}, 0)\in\C^2\times\C^2$ for any $(z_0,z_1,z_0')\in\C^2\times\C$ with $|z_0|^2+|z_1|^2=1$ and $|z'_0|=1$. It is obvious that $\tau(\S^{3}\times \S^{1})=(\S^{3}\times \S^{3})\cap \{z'_{1}=0\}$ and $\tau :\S^{3}\times \S^{1}\rightarrow \tau(\S^{3}\times \S^{1})$ is diffeomorphic. Moreover, $\tau$ is a holomorphic map: we may assume $z_{0}\neq 0$ without loss of generality. Then, $\tau (w_{0},\ w_{1})=(w_{0},\ w_{1}, 0)$ is holomorphic.

\vspace{.1cm}

To see ${g}_{0}$ is a Hermitian metric on $\S^{3}\times \S^{1}$, without loss of generality, we consider it in the coordinate chart $U_{00}$ of $\S^{3}\times \S^{1}$. For $\epsilon=0$

\begin{equation}
\left(\tilde{g}_{0,i\overline{j}}\right)=\left(
\begin{matrix}
\pi^2&\  \frac{\ii \pi}{2} \frac{w_1}{A} &  \ \frac{\pi}{2}\frac{w_{2}}{B}\\[3mm]
- \frac{\ii \pi}{2} \frac{\overline{w}_1}{A}&\  \ \frac{1}{4A}+\frac{3}{4A^2}&\ -\frac{\ii}{4} \frac{\overline{w}_{1}w_{2}}{AB}\\[3mm]
\frac{\pi}{2} \frac{\overline{w}_2}{B}& \ \  \frac{\ii}{4} \frac{\overline{w}_{2}w_{1}}{AB} &\ \ \frac{1}{4}\frac{|w_{2}|^{2}}{B^2}\\
\end{matrix}\right),
\end{equation}
therefore
\begin{equation*}
{g}_{0}=\displaystyle \sum_{i, j=0,1}g_{0, i\overline{j}}\,dw_{i}\otimes
d \overline{w}_j,
\end{equation*}
where
\begin{equation*}
\left({g}_{0,i\overline{j}}\right)=\left(
\begin{matrix}
\pi^2\ & \frac{\ii \pi }{2}\frac{w_{1}}{A} \\[3mm]
- \frac{\pi\ii}{2}\frac{\overline{w}_{1}}{A}\ & \ \ \frac{1}{4A}+\frac{3}{4A^2}\\[3mm]
\end{matrix}\right)
\end{equation*}
is a positive definite Hermitian matrix.
\end{proof}

\subsection{Gromov-Hausdorff convergence}
\begin{theo}\label{Gromov-Hausdorff convergence}
	The Hermitian manifolds $(\S^{3}\times \S^{3}, \tilde{g}_{\epsilon})$ converge to the Hermitian manifold $(\S^{3}\times \S^{1}, {g}_{0})$ in the Gromov-Hausdorff distance as $\epsilon\searrow 0$.
\end{theo}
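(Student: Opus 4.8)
The plan is to exhibit, for each small $\epsilon>0$, an explicit approximation between $(\S^3\times\S^3,\tilde g_\epsilon)$ and $(\S^3\times\S^1,g_0)$ and to estimate directly the relevant Gromov–Hausdorff quantity. Concretely, I would work with the holomorphic embedding $\tau:\S^3\times\S^1\to\S^3\times\S^3$ from Proposition \ref{submanifold}, whose image is $(\S^3\times\S^3)\cap\{z'_1=0\}$, and use the natural surjection (projection along the collapsing $\S^1$-factor) $P:\S^3\times\S^3\to\S^3\times\S^1$. The first step is to show that $\tau$ is an $o(1)$-Gromov–Hausdorff approximation, i.e. that (i) $\tau$ distorts $\tilde g_\epsilon$-distances by at most $\eta(\epsilon)\to 0$ when the target carries $g_0$, and (ii) the image $\tau(\S^3\times\S^1)$ is $\eta(\epsilon)$-dense in $(\S^3\times\S^3,\tilde g_\epsilon)$. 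Since a $2\eta$-approximation gives $d_{GH}\le 2\eta$ (or use the standard correspondence/distortion formulation), this suffices.

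For step (ii), density, one observes that the collapsing direction is precisely the $z'$-factor of $\C^2$, which enters $h_\epsilon$ with weight $\epsilon^2$; concretely, from the formula \eqref{3d map} the $w_2$-coordinate (equivalently the $z'_1$-direction transverse to $\{z'_1=0\}$) contributes to $\tilde g_\epsilon$ a term that is $O(1)$ in the $|w_2|^2/B^2$ piece but the whole fibre $\{w_2\in\C\}\cup\{\infty\}\cong\S^2$ has $\tilde g_\epsilon$-diameter $O(\epsilon)$ once one accounts for the $\epsilon^2$ weighting — I would extract this from the block structure of \eqref{induced metric}, noting the $w_2w_2$ entry $\tfrac{|w_2|^2}{4B^2}+\tfrac{\epsilon^2}{4B}+\tfrac{3\epsilon^2}{4B^2}$ together with the mixed $w_0\bar w_2$, $w_1\bar w_2$ entries, all of which are $O(1)$ but restricted to the compact $w_2$-range give a short curve (after a reparametrization the relevant length scale is $\epsilon$). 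More cleanly: the fibres of $P$ are the $\S^1$-orbits (or $\C P^1$ in the $w_2$-picture) along which $z_0,z_1,z'_0$ are fixed up to phase, and on such a fibre $h_\epsilon = \epsilon^2 |dz'_1|^2+(\text{bounded})$, so its $\tilde g_\epsilon$-length is $O(\epsilon)$, whence every point of $\S^3\times\S^3$ is within $O(\epsilon)$ of $\tau(\S^3\times\S^1)$.

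For step (i), the distortion estimate, I would compare the induced metric $\tau^*\tilde g_\epsilon$ on $\S^3\times\S^1$ with $g_0$: by construction $\tau^*\tilde g_\epsilon(X,Y)=h_\epsilon(\tau_*(X)^{1,0},\tau_*(Y)^{1,0})$, and since $\tau$ kills the $z'_1$-direction this differs from $g_0(X,Y)=h_0(\iota_*(X)^{1,0},\iota_*(Y)^{1,0})$ only through the $\epsilon^2\,dz'_0\otimes d\bar z'_0$ term — giving $\tau^*\tilde g_\epsilon = g_0 + \epsilon^2\cdot(\text{smooth bounded tensor})$, hence $(1-C\epsilon^2)g_0\le \tau^*\tilde g_\epsilon\le(1+C\epsilon^2)g_0$ uniformly on the compact $\S^3\times\S^1$. (This is visible already in \eqref{induced metric 1} versus the $\epsilon=0$ matrix in the proof of Proposition \ref{submanifold}.) Such a bi-Lipschitz bound controls the intrinsic distances of the image up to a factor $1\pm C\epsilon^2$; the only subtlety is that intrinsic distances in the submanifold $\tau(\S^3\times\S^1)\subset(\S^3\times\S^3,\tilde g_\epsilon)$ could a priori be shorter than $\tau^*\tilde g_\epsilon$-distances because of shortcuts through the ambient space, but combined with step (ii) (every ambient point is $O(\epsilon)$-close to the image) any such shortcut can be pushed back into the image at the cost of an additional $O(\epsilon)$, so the intrinsic-versus-ambient discrepancy is also $O(\epsilon)$.

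The main obstacle I anticipate is precisely this last point — verifying that distances measured \emph{within} the collapsing manifold $(\S^3\times\S^3,\tilde g_\epsilon)$ are well approximated by distances in the limit, i.e. that no geodesic of $\tilde g_\epsilon$ between two points of $\tau(\S^3\times\S^1)$ exploits the thin $\S^1$-direction to gain more than $O(\epsilon)$; handling this rigorously requires the projection $P$ to be distance-nonincreasing up to $O(\epsilon)$, which in turn needs the off-diagonal blocks of \eqref{induced metric} (the $w_0\bar w_2$ and $w_1\bar w_2$ entries, which are $O(1)$, not $O(\epsilon)$) to be controlled. I would deal with this by a fibrewise Riemannian-submersion-type argument: after the substitution that rescales the $w_2$-fibre to unit size, $P$ becomes an almost-Riemannian-submersion with fibres of diameter $O(\epsilon)$, and the standard fact that such collapsing submersions converge in Gromov–Hausdorff distance to the base (here with metric $g_0$, which is exactly the horizontal part at $\epsilon=0$) finishes the proof. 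Alternatively, one can bypass submersion technicalities entirely by the explicit two-sided estimate: $d_{g_0}(P(x),P(y)) - C\epsilon \le d_{\tilde g_\epsilon}(x,y)\le d_{g_0}(P(x),P(y)) + C\epsilon$ for all $x,y$, proved by lifting a near-minimizing $g_0$-geodesic horizontally (upper bound) and projecting a $\tilde g_\epsilon$-geodesic (lower bound, using that $P$ decreases lengths up to $1+C\epsilon^2$ after the rescaling), and then $d_{GH}\big((\S^3\times\S^3,\tilde g_\epsilon),(\S^3\times\S^1,g_0)\big)\le C\epsilon\to 0$.
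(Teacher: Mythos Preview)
Your two-step strategy (density of the image of $\tau$, plus $C^0$-closeness of $\tau^*\tilde g_\epsilon=g_\epsilon$ to $g_0$) is exactly the skeleton of the paper's proof, and your step (i) is essentially what the paper does in its second half, namely verify $|g_\epsilon-g_0|_{g_0}\to 0$ and use the triangle inequality for $d_{GH}$.

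The gap is in step (ii). You assert that the direction transverse to $\{z'_1=0\}$, i.e.\ the $w_2$-coordinate direction, has $\tilde g_\epsilon$-length of order $\epsilon$. But the $(2,\bar 2)$ entry of \eqref{induced metric} is $\tfrac{|w_2|^2}{4B^2}+O(\epsilon^2)$, which does \emph{not} vanish as $\epsilon\to 0$ away from $w_2=0$; hence $\partial_{w_2}$ is not a short direction for $\tilde g_\epsilon$, and a curve moving purely in $w_2$ from a generic point to $\{w_2=0\}$ has length bounded below independently of $\epsilon$. Your ``more cleanly'' paragraph does not rescue this: fixing $z_0,z_1,z'_0$ up to phase still leaves a real tangent vector whose $(1,0)$-part, with respect to the Calabi--Eckmann $J$, has a nontrivial $\partial_{w_0}$-component, so the $h_\epsilon$-length is not governed by $\epsilon^2|dz'_1|^2$ alone. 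You sense this when you note the off-diagonal $w_0\bar w_2$, $w_1\bar w_2$ blocks are $O(1)$, but the submersion you then invoke is never actually specified, and there is no obvious globally defined $P:\S^3\times\S^3\to\S^3\times\S^1$ with $O(\epsilon)$-diameter fibres.

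The paper's remedy is to locate the genuine null direction of $\tilde g_0$: one computes $\det\tilde g_0=0$ with kernel spanned by
\[
\alpha=-\frac{\overline w_2}{2\pi}\,\frac{\partial}{\partial w_0}+B\,\frac{\partial}{\partial w_2},
\]
which mixes $\partial_{w_0}$ and $\partial_{w_2}$. A direct check gives $\tilde g_\epsilon(\alpha,\alpha)=\epsilon^2$, so the real and imaginary parts $\zeta_1,\zeta_2$ of $\alpha$ are orthogonal with $|\zeta_i|^2_{\tilde g_\epsilon}=\tfrac12\epsilon^2$. The paper then explicitly solves the ODE for the integral curve of a suitable combination $a\zeta_1+b\zeta_2$ and shows it reaches $\{w_2=0\}=\tau(\S^3\times\S^1)$ in bounded parameter time, hence in $\tilde g_\epsilon$-length $\le C\epsilon$. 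This is the missing ingredient in your density argument: the short curves exist, but they run along $\alpha$, not along the coordinate direction $\partial_{w_2}$.
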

\begin{proof} As previously done, we identity $\S^{3}\times \S^{1}$ with $(\S^{3}\times \S^{3})\cap \{z'_{2}=0\}$. Let ${g}_{\epsilon}$ be the pullback of $\tilde{g}_{\epsilon}$ by $\S^{3}\times \S^{1}\hookrightarrow\S^{3}\times \S^{3}$. Direct computation shows that
 $\det{g_{0}}=0$, and in $U_{00}$ the degenerating direction is
 $$
 \alpha=-\frac{\overline{w}_2}{2\pi }\,\frac{\partial}{\partial w_0}+B\frac{\partial}{\partial w_2}, 
 $$
 namely, $g_0(\alpha, X)=0$ for all $X\in T^{1, 0}(\S^{3}\times \S^{1})$. Write $\alpha=\zeta_{1}+\ii \zeta_{2},$ where
$$
\zeta_{1}=B\frac{\partial }{\partial x_{2}}-\frac{x_{2}}{2\pi}\frac{\partial}{\partial x_{0}}+\frac{x_{5}}{2\pi}\frac{\partial }{\partial x_{3}},\,\,\,\,\,\,\,  \zeta_{2}=B\frac{\partial }{\partial x_{5}}+\frac{x_{5}}{2\pi}\frac{\partial}{\partial x_{0}}+\frac{x_{2}}{2\pi}\frac{\partial }{\partial x_{3}}
$$
where $w_0=x_0+\ii x_3,w_1=x_1+\ii x_4,w_2=x_2+\ii x_5$. Since $\tilde{g}_\epsilon$ is Hermitian,
\begin{equation}\label{estimate}
\tilde{g}_{\epsilon}(\zeta_{1},\zeta_{1})=\tilde{g}_{\epsilon}(\zeta_{2},\zeta_{2})=\frac{1}{2}\tilde{g}_{\epsilon}(\alpha,\alpha)= \frac{1}{2}\epsilon^{2}\,\,\,\text{ and}\,\,\,\,\tilde{g}_{\epsilon}(\zeta_{1},\zeta_{2})=0.
\end{equation}

	Now, we will find the integral curve $\gamma(t)$ of $a\zeta_{1}+b\zeta_{2}$ in $\S^3\times\S^3$, where $a, b$ are real constants satisfying $a^{2}+b^{2}=1$ to be determined later. In the local chart $U_{00}$, assume $\gamma(t)=(x_{0}(t), \cdots, x_{5}(t))$. From $d\gamma/dt = a\zeta_1+b\zeta_2$, we have
	\begin{equation}\label{integral curve}
	\small\left\{ \begin{aligned}
	&\frac{dx_{2}}{dt}=aB, &\frac{dx_{5}}{dt}=bB,\\
	&\frac{dx_{0}}{dt}=\frac{b}{2\pi}x_{5}-\frac{a}{2\pi}x_{2},\ &\frac{dx_{3}}{dt}=\frac{a}{2\pi}x_{5}+\frac{b}{2\pi}x_{2},\\
	&\frac{dx_{1}}{dt}=0,&\frac{dx_{4}}{dt}=0.
	\end{aligned}  \right.
	\end{equation}
	Let $x_{2}=r\cos\theta, x_{5}=r\sin\theta.$ Then
	\begin{equation*}\left\{
	\begin{aligned}
	&\frac{d x_{2}}{dt}=a(1+r^{2}),\\
	&\frac{d x_{5}}{dt}=b(1+r^{2}).
	\end{aligned} \right.
	\end{equation*}
	Define $\nu_{ab}$ such that $\cos \nu_{ab}=b, \sin\nu_{ab}=a.$
	It follows
	\begin{equation*}\left\{
	\begin{aligned}
	&\frac{dr}{dt}=\sin(\theta+\nu_{ab})(1+r^{2}),\\
	&r\frac{d\theta}{dt}=\cos(\theta+\nu_{ab})(1+r^{2}).
	\end{aligned}\right.
	\end{equation*}
	When $\cos(\theta(0)+\nu_{ab})=0$, it is easy to check
		\begin{equation*}\left\{
		\begin{aligned}
		&r=\sin(\theta+\nu_{ab}) \tan(t+\sin(\theta+\nu_{ab}) a_{1}),\\
		&\theta=\theta(0)
		\end{aligned}\right.
		\end{equation*}
		where $a_{1}=\arctan r(0)$ is a solution. As long as $x_2,x_5$ are found, the functions $x_0,x_1,x_3,x_4$ can be solved from \eqref{integral curve} uniquely for the initial data.
	
	For any given $(w_{0}(0), w_{1}(0), w_{2}(0))$, we may choose $a, b$ so that $\cos(\nu_{ab}+\theta(0))=0, \sin (\nu_{ab}+\theta(0))=-1$. Let $\gamma(t)$ be the integral curve solving \eqref{integral curve} with $\gamma(0)=(w_{0}(0), w_{1}(0), w_{2}(0))$. From the discussion above,
	$r= -\tan(t-a_{1})$ and it follows that $r(a_1)=0$, in turn, $\gamma(a_{1})\in \S^{3}\times \S^{1}$. By \eqref{estimate}
	$$
	\left|\frac{d\gamma}{dt}\right|_{\tilde{g}_\epsilon}\leq C\epsilon.
	$$
	 Hence the length of $\gamma(t)$ for $0\leq t\leq a_{1}$ is less than $C\epsilon$. Therefore, the Gromov-Hausdorff distance between $(\S^{3}\times \S^{3}, \tilde{g}_{\epsilon})$ and $(\S^{3}\times \S^{1}, {g}_{\epsilon})$ is less than $C\epsilon$.
		
	Next, we show that ${g}_{\epsilon}\rightarrow {g}_{0}$ as $\epsilon\rightarrow 0$ in the $C^{0}$ norm in the metric ${g}_1$ on $\S^3\times\S^1$.
	 In fact, by \eqref{induced metric 1}
	 $$
	 {g}_{\epsilon}-{g}_{0}=\epsilon^2
	 \begin{pmatrix}
	 \pi^{2} \   & \   \frac{\ii\pi}{2} \frac{w_{1}}{A}\\[3mm]
	 -\frac{\ii \pi}{2} \frac{\overline{w}_1}{A}\ &\ \frac{1}{4}\frac{|w_{1}|^2}{A^2}\\[3mm]
	 \end{pmatrix}.
	$$
	 Direct computation exhibits
	 $$
	 \begin{aligned}
	 &\left({g}_{\epsilon,0\overline j}-{g}_{0,0\overline j}\right){g}_{0}^{k\overline{j}}=\epsilon^{2}\delta_{0k},\\
	&\left({g}_{\epsilon,1\overline{j}}-{g}_{0,1\overline j}\right){g}_{0}^{0\overline{j}}=-\epsilon^{2}\frac{\ii}{2\pi}\frac{\overline{w}_1}{A},\\
	& \left({g}_{\epsilon,1\overline{j}}-{g}_{0,1\overline j}\right){g}_{0}^{1\overline{j}}=0,
	 \end{aligned}
	  $$
	  where
	  $$
	 {g}_{0}^{-1}=\frac{1}{\pi^{2}}
	 \begin{pmatrix}
	 \frac{A}{4}+\frac{3}{4}\ &\ -\frac{\ii\pi}{2}Aw_{1}\\[3mm]
	 \frac{\ii\pi}{2}A\overline{w}_{1}\ &\ \pi^{2}A^{2}\\[3mm]
	 \end{pmatrix}.
	 $$
	 Hence
\begin{equation*}
\begin{split}
	 &|{g}_{\epsilon}-{g}_0|^{2}_{ g_{0}}=g_{0}^{i\overline{j}}g_{0}^{k\overline{l}}\left({g}_{\epsilon,i\overline l}-{g}_{0,i\overline l}\right)\left({g}_{\epsilon,k\overline j}-{g}_{0,k\overline j}\right)\\[2mm]
	 =&g_{0}^{0\overline{j}}g_{0}^{0\overline{l}}\left({g}_{\epsilon,0\overline l}-{g}_{0,0\overline l}\right)\left({g}_{\epsilon,0\overline j}-{g}_{0,0\overline j}\right)= \epsilon^{4}\rightarrow 0, \,\,\text{as $\epsilon\rightarrow0$}.
	 \end{split}
\end{equation*}
	 So $(\S^{3}\times \S^{1}, g_{\epsilon})$ converge to $(\S^{3}\times \S^{1}, {g}_{0})$ in the Gromov-Hausdorff distance.

\vspace{.1cm}
	
	Finally, by the triangle inequality,
	\begin{equation*}
	\begin{aligned}
	d_{GH}&\left((\S^{3}\times \S^{3}, \tilde{g}_{\epsilon}), (\S^{3}\times \S^{1}, {g}_{0})\right)\\
	&\leq d_{GH}\left((\S^{3}\times \S^{3}, \tilde{g}_{\epsilon}), (\S^{3}\times \S^{1}, {g}_{\epsilon})\right)+d_{GH}\left((\S^{3}\times \S^{1}, {g}_{\epsilon}), (\S^{3}\times \S^{1}, {g}_{0})\right)\\
	&\leq C\epsilon
	\end{aligned}
	\end{equation*}
	where $d_{GH}$ denotes the Gromov-Hausdorff distance.
	We conclude that $(\S^{3}\times \S^{3}, g_{\epsilon})$ converge to $(\S^{3}\times \S^{1}, {g}_{0})$ in the Gromov-Hausdorff topology as $\epsilon\to 0^+$.
\end{proof}

\subsection{Cohomology classes of the fundamental forms $\omega_\epsilon$} 	
	
	The Aeppli cohomology on a compact complex manifold of complex dimension $n$ is defined by
	$$
	H_{A}^{p,q}=\frac{(Ker \ddbar)\cap \Omega^{p,q}}{(Im\ \partial+Im\  \overline\partial)\cap\Omega^{p,q}}
	$$
	where $\Omega^{p,q}$ is the space of $(p,q)$ forms. The Hodge $*$ operator associated to a Hermitian metric is isomorphic from the Bott-Chern cohomology $H^{n-p,n-q}_{BC}$ to $H^{p,q}_A$ (cf. \cite[Theorem 2.5]{An}). On a complex manifold with a Hermitian metric $g$, the Chern Ricci curvature (of first type) is 
	$$
	Ric({g}):=-\sqrt{-1}\partial\overline\partial\log\det g
	$$
	and the Chern scalar curvature is 
	$$
	R(g):=g^{i\bar j}Ric_{i\bar j}.
	$$
Both $g$ and the complex structure $J$ on $\S^3\times\S^1$ are parallel with respect to the Chern connection. More information can be found, for example, in \cite{LY}. 	 	

	\vspace{.1cm}
	
	A Hermitian metric on a complex $n$-dimensional manifold is called a \textit{Gauduchon metric} if its fundamental class $\omega$ satisfies $\ddbar\omega^{n-1}=0$. We have
\begin{prop}\label{gauduchon metric}
	Let ${\omega}_{\epsilon}=\ii {g}_{\epsilon, i\bar{j}}dz_{i}\wedge d{\bar{z}^{j}}$ be the K\"{a}hler form associated to ${g}_{\epsilon}$. Then
	
	(1) ${\omega}_{\epsilon}$ is a Gauduchon metric, i.e., $\partial\overline\partial{\omega}_{\epsilon}=0$, and represents an element in $H^{1,1}_A$.
	
	(2) $Ric({g}_{\epsilon})=2\ii A^{-2}dw_{1}\wedge d\overline{w}_{1}$ and $R({g}_{\epsilon})=2$.
	
	(3) $[{\omega}_{\epsilon}]\neq 0$ in $H^{1,1}_{A}$  and $[{\omega}_{1}]=[\frac{2}{1+\epsilon^{2}}{\omega}_{\epsilon}]$. In particular,
	$[{\omega}_{\epsilon_1}]\not=[{\omega}_{\epsilon_2}]$ if $\epsilon_1\not=\epsilon_2$.
	
\end{prop}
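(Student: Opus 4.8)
The plan is to establish the three assertions of Proposition~\ref{gauduchon metric} by direct computation using the explicit metric \eqref{induced metric 1}, exploiting the fact that all computations localize in the chart $U_{00}$ and are invariant under the coordinate changes \eqref{1}--\eqref{3}. First I would prove (1): since $\S^3\times\S^1$ has complex dimension $2$, a metric is Gauduchon precisely when $\partial\overline\partial\,\omega_\epsilon=0$, and because $\partial\overline\partial\,\omega_\epsilon$ is a $(2,2)$-form on a complex surface, it suffices to check that the single coefficient function vanishes. I would write $\omega_\epsilon = \ii\, g_{\epsilon,i\bar j}\, dw_i\wedge d\overline w_j$ with the entries read off from \eqref{induced metric 1}, noting that $g_{\epsilon,0\bar0}=(1+\epsilon^2)\pi^2$ is constant and the remaining entries depend only on $w_1,\overline w_1$ (not on $w_0$); then $\partial\overline\partial\,\omega_\epsilon$ involves only $\partial_{w_1}\partial_{\overline w_1}$ applied to $g_{\epsilon,1\bar1}$ and mixed terms, and a short computation with $A=1+|w_1|^2$ shows these assemble to zero. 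That $\omega_\epsilon$ is $d$-closed on the overlaps, hence defines a global class in $H^{1,1}_A$, follows since $\partial\overline\partial\,\omega_\epsilon=0$ is the defining condition for membership in the Aeppli group.

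Next I would prove (2). By definition $Ric(g_\epsilon) = -\ii\,\partial\overline\partial\log\det(g_{\epsilon,i\bar j})$. Computing the determinant of the $2\times2$ matrix in \eqref{induced metric 1}:
\begin{equation*}
\det(g_{\epsilon,i\bar j}) = (1+\epsilon^2)\pi^2\left(\tfrac{1}{4A}+\tfrac{3}{4A^2}+\tfrac{\epsilon^2}{4}\tfrac{|w_1|^2}{A^2}\right) - \tfrac{(1+\epsilon^2)^2\pi^2}{4}\tfrac{|w_1|^2}{A^2}.
\end{equation*}
A simplification using $|w_1|^2 = A-1$ should collapse this to a constant multiple of $A^{-2}$, so that $\log\det(g_{\epsilon,i\bar j}) = -2\log A + \text{const}$, and then $-\ii\,\partial\overline\partial(-2\log A) = 2\ii\,\partial\overline\partial\log A = 2\ii\, A^{-2}\, dw_1\wedge d\overline w_1$, using the standard identity $\partial\overline\partial\log(1+|w|^2) = (1+|w|^2)^{-2}\,dw\wedge d\overline w$ for the Fubini--Study potential. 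For the Chern scalar curvature, $R(g_\epsilon) = g_\epsilon^{i\bar j}\,Ric_{i\bar j} = g_\epsilon^{1\bar1}\cdot 2A^{-2}$; from the inverse matrix (the $\epsilon=0$ version $g_0^{-1}$ is displayed in the proof of Theorem~\ref{Gromov-Hausdorff convergence}, and the general case is analogous with an overall $(1+\epsilon^2)^{-1}$), one reads off $g_\epsilon^{1\bar1} = A^2$, giving $R(g_\epsilon)=2$.

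Finally, for (3) I would argue that $[\omega_\epsilon]\neq 0$ in $H^{1,1}_A$ via the duality $H^{1,1}_A \cong H^{1,1}_{BC}$ (Hodge $*$, as recalled in Section~2.4) together with a nondegenerate pairing $H^{1,1}_{BC}\times H^{1,1}_A\to\C$; since $\S^3\times\S^1$ is a surface with $b_2=0$ but nonzero Aeppli/Bott--Chern cohomology in bidegree $(1,1)$, the class of a positive form cannot vanish — concretely one pairs $\omega_\epsilon$ against a suitable representative so that the integral of $\omega_\epsilon\wedge(\text{something})$ is positive. The relation $[\omega_1] = [\tfrac{2}{1+\epsilon^2}\omega_\epsilon]$ should follow by showing $\omega_1 - \tfrac{2}{1+\epsilon^2}\omega_\epsilon$ is $\partial$-exact plus $\overline\partial$-exact: indeed $\omega_1$ and $\tfrac{2}{1+\epsilon^2}\omega_\epsilon$ have the \emph{same} $dw_0\wedge d\overline w_0$ and $dw_1\wedge d\overline w_1$ coefficients (both equal $2\pi^2$ and $2(\tfrac{1}{4A}+\tfrac{3}{4A^2})$ after scaling — one must check this), while they differ only in the off-diagonal $dw_0\wedge d\overline w_1$ and $dw_1\wedge d\overline w_0$ terms, and such a pure cross-term difference of the form $\ii\, c\,(\tfrac{w_1}{A}dw_0\wedge d\overline w_1 - \tfrac{\overline w_1}{A}dw_1\wedge d\overline w_0)$ is $\partial u + \overline\partial\bar u$ for an explicit $(0,1)$-form $u$; then $[\omega_{\epsilon_1}]\neq[\omega_{\epsilon_2}]$ for $\epsilon_1\neq\epsilon_2$ follows because the ratio $\tfrac{2}{1+\epsilon^2}$ is a strictly monotone function of $\epsilon$ on $[0,1]$ and $[\omega_1]\neq 0$. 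The main obstacle I anticipate is the nonvanishing statement $[\omega_\epsilon]\neq0$: it requires identifying an explicit dual Bott--Chern class (or an explicit $\partial\overline\partial$-exact obstruction) to certify that the positive $(1,1)$-form is not a sum of a $\partial$-exact and $\overline\partial$-exact form, which is where the non-K\"ahler nature of the Hopf surface genuinely enters; the curvature computations in (2), while requiring care with the algebra, are routine once the determinant simplification is carried out.
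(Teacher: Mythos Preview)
Your treatment of parts (1) and (2) is essentially the same as the paper's and is correct.

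For part (3), however, your computation of the difference $\omega_1-\tfrac{2}{1+\epsilon^2}\omega_\epsilon$ is wrong. The off-diagonal entries of $g_\epsilon$ scale exactly with $(1+\epsilon^2)$, so after multiplying by $\tfrac{2}{1+\epsilon^2}$ the $dw_0\wedge d\overline w_1$ and $dw_1\wedge d\overline w_0$ terms \emph{agree} with those of $\omega_1$; likewise the $dw_0\wedge d\overline w_0$ terms match. The discrepancy lives entirely in the $dw_1\wedge d\overline w_1$ term, and a short simplification using $|w_1|^2=A-1$ gives
\[
\omega_1-\tfrac{2}{1+\epsilon^2}\omega_\epsilon=\tfrac{\epsilon^2-1}{1+\epsilon^2}\,\ii\,A^{-2}\,dw_1\wedge d\overline w_1=\tfrac{\epsilon^2-1}{2(1+\epsilon^2)}\,Ric(g_\epsilon),
\]
a multiple of the Chern--Ricci form you just computed in (2). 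This form is $d$-closed (it is $\ii\partial\overline\partial$ of something), and since $H^2(\S^3\times\S^1;\mathbb R)=0$ it is $d$-exact; writing $d=\partial+\overline\partial$ shows it lies in $\mathrm{Im}\,\partial+\mathrm{Im}\,\overline\partial$, hence vanishes in $H^{1,1}_A$. Your proposed scheme of exhibiting a cross-term as $\partial u+\overline\partial\bar u$ is aimed at the wrong component and would not close the argument.

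For $[\omega_\epsilon]\neq 0$ the paper does not use abstract duality but a direct Stokes-type argument (after Harvey--Lawson): if $\omega_\epsilon=\partial\alpha+\overline\partial\beta$ were exact in the Aeppli sense, set $S=\alpha+\beta$ and integrate $d(S\wedge\overline{dS})$ over the compact manifold; the result forces $\int\omega_\epsilon\wedge\omega_\epsilon=0$, contradicting positivity of the fundamental form. This is the concrete obstruction you were looking for, and it avoids having to identify $H^{1,1}_{BC}$ explicitly.
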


\begin{proof}
	(1) From \eqref{induced metric}, when  $(i_{0},j_{0})=(0,0) \text{\ or\ } (1,1)$, it is obvious that
	$$
	\partial\overline\partial \left({g}_{\epsilon, i_{0}\overline{j_{0}}}dw_{i_{0}}\wedge d\overline{w}_{j_{0}}\right)=0;
	$$

	when  $(i_{0},j_{0})=(0,1)$, we have
	\begin{equation*}
	\partial\overline\partial\left({g}_{\epsilon,0\overline{1}}dw_{0}\wedge d\overline{w}_{1}\right)=\partial_{1}\partial_{\overline{1}}\left(\frac{(1+\epsilon^{2})\ii\pi}{2} \frac{w_1}{A}dw_{0}\wedge d\overline{w}_{1}\right)=0
	\end{equation*}
	as ${g}_{\epsilon, 0\overline{1}}$ only depends on $w_{1}, \overline{w}_{1}$; similarly,
	\[
	\partial\overline\partial\left({g}_{\epsilon, 1\overline{0}}d w_1\wedge d\overline{w}_{0}\right)=0.
	\]

(2) From \eqref{induced metric 1} direct computation leads to
$$
\begin{aligned}
&\det({g}_{\epsilon})=\left(1+\epsilon^{2}\right)\pi^{2}A^{-2}, \\
&Ric({g}_{\epsilon})=-\sqrt{-1}\partial\overline{\partial}\log\det ({g}_\epsilon)=2\sqrt{-1} A^{-2}dw_{1}\wedge d\overline{w}_{1}.\\
\end{aligned}
$$
By \eqref{induced metric 1}, the inverse matrix is
\begin{equation}\label{inverse metric 1}
\big({g}_{\epsilon}^{i\bar{j}}\big)=\frac{A^{2}}{(1+\epsilon^{2})\pi^{2}}\left(
\begin{matrix}
\frac{1}{4A}+\frac{3}{4A^2}+\frac{\epsilon^{2}}{4}\frac{|w_{1}|^2}{A^2} & \ \ -\frac{(1+\epsilon^{2})\ii\pi}{2} \frac{w_1}{A} \\[3mm]
\frac{(1+\epsilon^{2})\ii\pi}{2}\frac{\overline{w}_{1}}{A}& \ \ (1+\epsilon^{2})\pi^2 \\[3mm]
\end{matrix}\right).
\end{equation}

Hence, the Chern scalar curvature is constant for the family of metrics $g_\epsilon$:
$$
R({g}_\epsilon)=g_{\epsilon}^{i\bar j}Ric({g}_\epsilon)_{i\bar j}=2.
$$

(3) The global (1,1) form
$$
{\omega}_{1}-\frac{2}{1+\epsilon^{2}}{\omega}_{\epsilon}=\frac{\epsilon^{2}-1}{1+\epsilon^{2}}\ii A^{-2}dw_{1}\wedge \overline{w}_{1}=\frac{\epsilon^{2}-1}{2(1+\epsilon^{2})}Ric({\omega}_{1})
$$
is $d$-closed; hence, it is $d$-exact since $d=\partial+\overline\partial$ and the cohomology group $H^{2}(\S^{3}\times \S^{1})=0$. So
$
[{\omega}_{1}]=[\frac{2}{1+\epsilon^{2}}{\omega}_{\epsilon}]
$ in $H^{1,1}_A$.

By \cite[Proposition 37]{HL}, $[{\omega}_{1}]\neq 0$ in $H^{1,1}_{A}$. In fact, if the real $(1,1)$ form $\omega_\epsilon$ were in $Im\,\partial+Im\,\overline\partial$ then it could be expressed as $\partial\alpha+\overline{\partial}\beta$ for some $\alpha\in\Omega^{0,1},\beta\in\Omega^{1,0}$. Then taking $S=\alpha+\beta$ and applying Stokes' theorem
 $$
 \begin{aligned}
 0=&\int_{\S^3\times\S^1} d(S\wedge \overline{d S})=\int_{\S^3\times\S^1} dS\wedge \overline{d S}\\=&\int_{\S^3\times\S^1}\tilde\omega_\epsilon\wedge\tilde\omega_\epsilon+\int_{\S^3\times\S^1}dS^{0,2}\wedge \overline{dS^{0,2}}+\int_{\S^3\times\S^1}dS^{2,0}\wedge \overline{dS^{2,0}}.
 \end{aligned}
 $$
 Since all three integrals are non-negative, and $\omega_\epsilon\wedge\omega_\epsilon\geq 0$, we must have
 $\tilde\omega_\epsilon\wedge\tilde\omega_\epsilon\equiv 0$, but this contradicts that $\omega_\epsilon$ is the fundamental class. It follows that $[\omega_{\epsilon_2}]-[\omega_{\epsilon_1}]=[\frac{\epsilon^2_2-\epsilon^2_1}{2}\omega_1]\not=0$ in $H^{1,1}_A$, whenever $\epsilon_2\not=\epsilon_1$.
\end{proof}

We say a Hermitian metric $g$ is a \textit{locally conformally K\"{a}hler} metric on a complex manifold $M$ if for each point $x\in M$ there exists an open neighbourhood $U$ of $x$ and a function $f$ on $U$ so that $e^{-f}g$ is K\"{a}hler. It is equivalent to that the Lee form, which is a real 1-form defined by
\begin{equation}\label{def lee form}
d\omega = -2\theta \wedge \omega,
\end{equation}
is closed \cite{GO}.
We have
\begin{theo}
	Each metric $g_{\epsilon}$ is locally conformally K\"{a}hler.
\end{theo}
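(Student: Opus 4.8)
The plan is to verify that the Lee form $\theta$ determined by \eqref{def lee form} is closed; by the criterion recalled just above the statement (and in \cite{GO}), this is equivalent to $g_\epsilon$ being locally conformally K\"ahler. First I would compute the fundamental form explicitly from \eqref{induced metric 1}. Writing $\omega_\epsilon = \ii\left[(1+\epsilon^2)\pi^2\, dw_0\wedge d\overline w_0 + \tfrac{(1+\epsilon^2)\ii\pi}{2}\tfrac{w_1}{A}\,dw_0\wedge d\overline w_1 - \tfrac{(1+\epsilon^2)\ii\pi}{2}\tfrac{\overline w_1}{A}\,dw_1\wedge d\overline w_0 + \left(\tfrac{1}{4A}+\tfrac{3}{4A^2}+\tfrac{\epsilon^2}{4}\tfrac{|w_1|^2}{A^2}\right)dw_1\wedge d\overline w_1\right]$, I would apply $d=\partial+\overline\partial$ term by term. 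The key observation, already used in the proof of Proposition \ref{gauduchon metric}(1), is that every coefficient depends only on $w_1,\overline w_1$, so the only surviving derivatives are $\partial_1$ and $\partial_{\overline 1}$; this kills the $dw_0\wedge d\overline w_0$ term and makes $d\omega_\epsilon$ a sum of a small number of explicit 3-forms.

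Next I would extract $\theta$ from the computed $d\omega_\epsilon$. On a complex surface, $d\omega = -2\theta\wedge\omega$ always has a (unique) solution $\theta$ — this is the defining relation of the Lee form in real dimension $4$ — and in practice one reads it off by matching components. I expect that because $\log\det g_\epsilon = \log\big((1+\epsilon^2)\pi^2\big) - 2\log A$ and the metric has the special block structure \eqref{induced metric 1}, the Lee form will come out to be (a constant multiple of) $d\log A = \tfrac{\overline w_1\,dw_1 + w_1\,d\overline w_1}{A}$ up to a closed correction, i.e. something manifestly exact on $U_{00}$; more precisely I anticipate $\theta = \tfrac12\, d\log\!\big(\text{something depending only on }A\big)$ plus possibly a term proportional to $dw_1/\text{(stuff)}$ whose exterior derivative still vanishes. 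Since $A=1+|w_1|^2$ is a globally defined positive function on the total space minus the fibre $\T^2_{p_1}$, and by the symmetry $w_1\mapsto 1/w_1$ of the Calabi-Eckmann coordinates one gets a matching expression on the other chart, the 1-form $\theta$ will patch to a global closed (indeed, locally exact) 1-form. Then $d\theta=0$ gives the conclusion.

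The main obstacle I foresee is purely computational bookkeeping: correctly computing $d\omega_\epsilon$ (several terms, each a $3$-form on a real $4$-manifold, so each lies in a $1$-dimensional space once one fixes a coframe, which helps), then inverting the algebraic relation $d\omega_\epsilon = -2\theta\wedge\omega_\epsilon$ to isolate $\theta$, and finally checking $d\theta=0$. A cleaner route, which I would actually pursue to avoid the linear algebra, is to guess the conformal factor directly: since $g_0$ has $\det g_0 = \pi^2 A^{-2}$ and Chern-Ricci form $2\ii A^{-2} dw_1\wedge d\overline w_1$ which is $d$-exact (Proposition \ref{gauduchon metric}), I would test whether $e^{-f}g_\epsilon$ is K\"ahler for $f = -\log A$ (or $f=\log A$), i.e. check $\partial\overline\partial(e^{-f}\omega_\epsilon)=0$ locally — equivalently $d(e^{-f}\omega_\epsilon)=0$ on $U_{00}$ in this surface case — which reduces to a single identity among rational functions of $w_1,\overline w_1$. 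If that identity holds on one chart, the same computation on the overlapping Calabi-Eckmann charts (using \eqref{1}, \eqref{3}) and the observation that the conformal factors differ by $|z_\alpha|$-powers that are smooth and nonvanishing, shows local conformal K\"ahlerity everywhere. Either way, the substance of the proof is the verification of one explicit identity; the conceptual content is just that $A$ (essentially $\log|z_0|^{-2}$ read in these coordinates) provides the local potential.
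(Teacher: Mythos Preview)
Your first approach---compute $d\omega_\epsilon$, solve $d\omega_\epsilon=-2\theta_\epsilon\wedge\omega_\epsilon$ for $\theta_\epsilon$, then check $d\theta_\epsilon=0$---is exactly what the paper does, and it works. If you carry it out you will find
\[
\theta_\epsilon=(1+\epsilon^2)\left(\tfrac{\ii\pi}{4}\,dw_0-\tfrac{\ii\pi}{4}\,d\overline w_0+\tfrac{\overline w_1}{8A}\,dw_1+\tfrac{w_1}{8A}\,d\overline w_1\right)=(1+\epsilon^2)\left(-\tfrac{\pi}{2}\,dx_2+\tfrac{1}{8}\,d\log A\right),
\]
which is visibly closed.

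But the ``cleaner route'' you say you would actually pursue has a genuine gap: the guess $f=\pm\log A$ for the local conformal factor is wrong, and no function of $A$ alone can work. The point you miss is that $\theta_\epsilon$ has a nonzero $dw_0$-component. You can see this without any linear algebra: from your own observation that all coefficients of $\omega_\epsilon$ depend only on $w_1,\overline w_1$, the only surviving pieces of $d\omega_\epsilon$ are of type $dw_0\wedge dw_1\wedge d\overline w_1$ and $d\overline w_0\wedge dw_1\wedge d\overline w_1$, and in fact
\[
d\omega_\epsilon=\tfrac{(1+\epsilon^2)\pi}{2A^2}\,(dw_0-d\overline w_0)\wedge dw_1\wedge d\overline w_1,
\]
which is \emph{nonzero at $w_1=0$}. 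If $f$ depended only on $A$ (equivalently only on $w_1,\overline w_1$), then $df$ would vanish at $w_1=0$, hence $df\wedge\omega_\epsilon=0$ there, contradicting $d\omega_\epsilon\neq 0$. So $e^{-f}\omega_\epsilon$ cannot be closed for $f=\pm\log A$. The correct local potential must involve $x_2=\mathfrak{Im}\,w_0$; from the formula for $\theta_\epsilon$ above one reads off $f=(1+\epsilon^2)\big(\pi x_2-\tfrac14\log A\big)$ (up to the sign convention in $d\omega=-2\theta\wedge\omega$), which is only locally defined since $x_2$ is an $\S^1$-coordinate. Your intuition that ``every coefficient depends only on $w_1,\overline w_1$'' is precisely what forces the Lee form to have a $dx_2$-part, not what rules it out.
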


\begin{proof}
	By \cite{GO}, it suffices to prove that the Lee form $\theta$ is closed. Let $\omega_{\epsilon}$ be the K\"{a}hler form corresponding to $g_{\epsilon}$ and $\theta_\epsilon$ be its Lee form. Then
	\begin{equation*}
	\begin{split}
	\omega_{\epsilon}=&
	(1+\epsilon^{2})\pi^2 \i dw_{0}\wedge d\overline{w}_{0} +\frac{(1+\epsilon^{2})\i\pi}{2} \frac{w_1}{A}\i dw_{0}\wedge d\overline{w}_{1} \\
	&- \frac{(1+\epsilon^{2})\i\pi}{2}\frac{\overline{w}_{1}}{A}\i dw_{1}\wedge d\overline{w}_{0}
	+ \left(\frac{1}{4A}+\frac{3}{4A^2}+\frac{\epsilon^{2}}{4}\frac{|w_{1}|^2}{A^2}\right)\i dw_{1}\wedge d\overline{w}_{1}.
	\end{split}
	\end{equation*}
	Hence, we have
	\begin{equation}\label{domega}
	\begin{split}
	d\omega_{\epsilon}&=\i\frac{(1+\epsilon^{2})\i\pi}{2}\frac{A-|w_{1}^{2}|}{A^{2}}\left(d w_{1}\wedge dw_{0}\wedge d\overline{w}_{1}-d\overline{w}_{1}\wedge d w_{1}\wedge  d\overline{w}_{0}\right)\\
	&=-\frac{(1+\epsilon^{2})\i\pi}{2}\frac{1}{A^{2}}\left(\i dw_{0}\wedge d w_{1}\wedge d\overline{w}_{1}-\i d\overline{w}_{0}\wedge d w_{1}\wedge d\overline{w}_{1}\right).\\
	\end{split}
	\end{equation}
	Assume $$\theta_{\epsilon}=a\,d w_{0}+\overline{a}\,d\overline{w}_{0}+b\,dw_{1}+\overline{b}\,d\overline{w}_{1}.$$ 
	It follows
	\begin{equation*}
	\begin{split}
	\theta_{\epsilon}\wedge\omega_{\epsilon}=&\left(a \frac{(1+\epsilon^{2})\ii\pi}{2}\frac{\overline{w}_{1}}{A}+b(1+\epsilon^{2})\pi^2\right)\i dw_{0}\wedge d\overline{w}_{0}\wedge d w_{1}\\
	&+\left(-\overline{a}\frac{(1+\epsilon^{2})\i\pi}{2} \frac{w_1}{A}+\overline{b}(1+\epsilon^{2})\pi^2 \right) \i dw_{0}\wedge d\overline{w}_{0}\wedge  d\overline{w}_{1}\\
	&+\left(a\frac{A+3+\epsilon^{2}|w_{1}|^2}{4A^2}-b\frac{(1+\epsilon^{2})\ii\pi}{2} \frac{w_1}{A}\right)\i d w_{0}\wedge d w_{1}\wedge d\overline{w}_{1}\\
	&+\left(\overline{a}\frac{A+3+\epsilon^{2}|w_{1}|^2}{4A^2}+\overline{b}\frac{(1+\epsilon^{2})\ii\pi}{2} \frac{\overline{w}_1}{A}\right)\i d\overline{w}_{0}\wedge d w_{1}\wedge d\overline{w}_{1}.
	\end{split}
	\end{equation*}
	By \eqref{def lee form} and \eqref{domega}, we demand 
	\begin{equation*}
	\begin{split}
	&a \frac{(1+\epsilon^{2})\i\pi}{2}\frac{\overline{w}_{1}}{A}+b(1+\epsilon^{2})\,\pi^2=0,\\
	&a\left(\frac{A+3+\epsilon^{2}|w_{1}|^2}{4A^2}\right)-b\frac{(1+\epsilon^{2})\ii\pi}{2} \frac{w_1}{A}=\frac{(1+\epsilon^{2})\i\pi}{4}\frac{1}{A^{2}}.
	\end{split}
	\end{equation*}
	Then
	\begin{equation*}
	a=(1+\epsilon^{2})\frac{\i\pi}{4}, \,\,\, \, b=(1+\epsilon^{2})\frac{\overline{w}}{8A}.
	\end{equation*}
	We obtain
	\begin{equation}\label{lee form}
	\theta_{\epsilon}=(1+\epsilon^{2})\left(\frac{\i\pi}{4}dw_{0}-\frac{\i\pi}{4}d\overline{w}_{0}+\frac{\overline{w}_{1}}{8A}dw_{1}+\frac{w_{1}}{8A}d\overline{w}_{1}\right).
	\end{equation}
	Therefore
	\begin{equation*}
	d\theta_{\epsilon}=\frac{1}{8}(1+\epsilon^{2})\left(\left(\frac{1}{A}-\frac{|w_{1}|^{2}}{A^{2}}\right)d\overline{w}_{1}\wedge dw_{1}+\left(\frac{1}{A}-\frac{|w_{1}|^{2}}{A^{2}}\right)dw_{1}\wedge d\overline{w}_{1}\right)=0.
	\end{equation*}
	Thus $g_\epsilon$ is locally conformally K\"ahler. 
\end{proof}

Now, we demonstrate that the metrics $g_\epsilon$ are different from those in \cite{GO}.

\vspace{.1cm}

First we recall the construction of the Hermitian metrics in \cite{GO} and adopt the notations therein. The Hopf surface $M_{\alpha_{1},\alpha_{2}}$ is $(\mathbb{C}^{2}\setminus\{(0,0)\})/\sim$, where $$(u,v)\sim(\alpha_{1}u+\lambda v^{m },\alpha_{2}v), \,\,\,(u,v)\in \mathbb{C}^{2}\setminus\{(0,0)\} $$ and
$\alpha_{1},\alpha_{2}, \lambda$ are complex number and $m$ is a nonnegative integer such that
$$|\alpha_{1}|\geq |\alpha_{2}|>1$$
and
$$(\alpha_{1}-\alpha_{2}^{m})\lambda=0.$$
Let $\Phi_{\alpha_{1},\alpha_{2}}$ be a function that satisfies
\begin{equation*}
|u|^{2}\Phi_{\alpha_{1},\alpha_{2}}^{-\frac{2k_{1}}{k_{1}+k_{2}}}+|v|^{2}\Phi_{\alpha_{1},\alpha_{2}}^{-\frac{2k_{2}}{k_{1}+k_{2}}}=1,
\end{equation*}
where $k_{i}=\ln |\alpha_{i}|, i=1,2.$
It is shown that $\frac{1}{\Phi_{\alpha_{1},\alpha_{2}}}\ddbar \Phi_{\alpha_{1},\alpha_{2}}$ are well defined and  locally conformally K\"ahler metrics on $M_{\alpha_1,\alpha_2}$ with parallel Lee form.

\vspace{.1cm}

\begin{prop}\label{parallel}
	The Lee form $\theta_{\epsilon}$ is not parallel.
\end{prop}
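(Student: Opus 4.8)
The plan is to show directly that $\nabla\theta_\epsilon \neq 0$ by computing the covariant derivative of the explicit $1$-form $\theta_\epsilon$ from \eqref{lee form} with respect to the Levi-Civita connection of $g_\epsilon$, and exhibiting one nonvanishing component. Since parallelism is a pointwise statement, it suffices to work in the single chart $U_{00}$ and to test at a conveniently chosen point, say on the fibre $\T^2_{p_0}$ where $w_1=0$ (so $A=1$), which simplifies the Christoffel symbols considerably. I would first note that $\theta_\epsilon$ scales by the overall constant $(1+\epsilon^2)$, so it is enough to treat the case $\epsilon=0$ or, more economically, to track that constant along and observe it never cancels against anything; the point is purely the non-parallelism, which is independent of $\epsilon$.

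The key steps, in order: (i) From \eqref{induced metric 1} read off the Hermitian metric $g_\epsilon$ in the real coordinates $x_0,x_1,x_2,x_3$ (with $w_0=x_0+\i x_2$ on the fibre, $w_1=x_1+\i x_3$ on the base — matching the paper's convention in Theorem \ref{main3}) and compute the real Christoffel symbols $\Gamma^k_{ij}$; by the block structure and the fact that $g_\epsilon$ depends only on $w_1,\overline w_1$, only derivatives in the $x_1,x_3$ directions contribute, and at $w_1=0$ many of these vanish or take simple values — indeed $\partial_{x_1}A = \partial_{x_3}A = 0$ at $w_1 = 0$ while $\partial^2_{x_1}A = \partial^2_{x_3}A = 2$, so the first derivatives of the metric vanish at $w_1=0$ and one needs second-order data, i.e. one is really computing $\nabla\theta_\epsilon$ where the connection term drops but the exterior-derivative-type term survives. (ii) Write $\theta_\epsilon = \sum \theta_i\,dx^i$ with, from \eqref{lee form}, $\theta = (1+\epsilon^2)\bigl(\tfrac{\pi}{2}dx_2 \ +\ \text{(terms in }dw_1,d\overline w_1\text{ of size }O(|w_1|))\bigr)$; in particular $\theta_{x_0}=0$, $\theta_{x_2}=(1+\epsilon^2)\pi/2$ is constant, and the $x_1,x_3$ components vanish to first order at $w_1=0$. (iii) Compute $(\nabla_{e_k}\theta)_l = \partial_k\theta_l - \Gamma^m_{kl}\theta_m$ and evaluate at a point of $\T^2_{p_0}$: the term $\partial_k\theta_l$ picks up the derivative of the $O(|w_1|)$ part of $\theta$, which is nonzero, while the Christoffel correction either vanishes (first metric derivatives vanish at $w_1=0$) or multiplies a component of $\theta$ that vanishes there. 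Concretely I expect $\nabla_{\partial/\partial x_1}\theta \neq 0$ at $w_1 = 0$, coming from $\partial_{x_1}\bigl(\tfrac{\overline w_1}{8A}dw_1 + \tfrac{w_1}{8A}d\overline w_1\bigr)\big|_{w_1=0} = \tfrac{1}{8}(dw_1 + d\overline w_1) = \tfrac14 dx_1 \neq 0$, uncorrected by any Christoffel term. That single nonvanishing entry finishes the proof.

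The main obstacle is purely computational: assembling the full set of real Christoffel symbols of $g_\epsilon$ is tedious, and one must be careful that the chosen test point and direction genuinely isolate a surviving term rather than one that is cancelled by the connection. The safeguard is the choice $w_1=0$, where the first derivatives of $g_\epsilon$ vanish and hence all $\Gamma^k_{ij}$ vanish at that point, so at $w_1=0$ one simply has $(\nabla\theta_\epsilon)|_{w_1=0} = d\theta_\epsilon^{\,\text{(as a matrix of partials)}}|_{w_1=0}$ in these coordinates, reducing the claim to the elementary computation in step (iii) above; since that derivative is manifestly nonzero, $\theta_\epsilon$ cannot be parallel. (Note this is consistent with $d\theta_\epsilon=0$ from the previous theorem — the symmetric part of $\nabla\theta_\epsilon$ need not, and here does not, vanish, so $g_\epsilon$ is locally conformally Kähler but not Vaisman.)
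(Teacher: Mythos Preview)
Your safeguard --- that at $w_1=0$ all first derivatives of $g_\epsilon$ vanish, hence all Christoffel symbols vanish --- is false, and this is the load-bearing step. The metric depends on $w_1$ not only through $A=1+|w_1|^2$ but also \emph{linearly} through the off-diagonal Hermitian entry $g_{\epsilon,0\bar 1}=\tfrac{(1+\epsilon^2)\ii\pi}{2}\tfrac{w_1}{A}$; in the real form \eqref{metric real} this produces, for instance, $g_{12}=\tfrac{(1+\epsilon^2)\pi}{4}\tfrac{x_1}{A}$, whose $x_1$-derivative at $w_1=0$ equals $\tfrac{(1+\epsilon^2)\pi}{4}\neq 0$. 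Consequently $\Gamma_{11}^{2}\big|_{w_1=0}=\tfrac12 g^{22}\cdot 2g_{12,1}=\tfrac{1}{2\pi}\neq 0$, and since $\theta_2=-(1+\epsilon^2)\tfrac{\pi}{2}$ is the only nonzero component of $\theta_\epsilon$ at $w_1=0$ (note the sign: you have it as $+\tfrac{\pi}{2}$), the connection correction to $(\nabla_{\partial_1}\theta_\epsilon)_1$ does \emph{not} vanish. Your argument, as written, does not establish the claimed nonvanishing.

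That said, the approach is salvageable: carrying out the computation honestly at $w_1=0$ gives
\[
(\nabla_{\partial_1}\theta_\epsilon)_1\Big|_{w_1=0}
=\partial_1\theta_1-\Gamma_{11}^{2}\theta_2
=\tfrac{1+\epsilon^2}{4}-\tfrac{1}{2\pi}\!\left(-\tfrac{(1+\epsilon^2)\pi}{2}\right)
=\tfrac{1+\epsilon^2}{2}\neq 0,
\]
so the Christoffel term reinforces rather than cancels. You simply need to compute it, not assert it away. By contrast, the paper chooses the fibre direction $\nabla_{\partial/\partial x_0}$ at a \emph{generic} point: since both $g_\epsilon$ and $\theta_\epsilon$ are independent of $x_0$, the partial-derivative term drops and only the Christoffel contribution survives, which the paper shows is nonzero when $x_3(x_3^2-x_1^2)\neq 0$. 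Your route (special point, generic direction) and the paper's (generic point, special direction) both work once the computations are done correctly; the paper's choice exploits the $x_0$-independence more systematically and avoids the trap you fell into.
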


\begin{proof} Assume $w_0=x_0+\ii x_2,w_1=x_1+\ii x_3$.
	By \eqref{lee form}, we have
	\[\theta_{\epsilon}=(1+\epsilon^{2})\left(-\frac{\pi}{2}dx_{2}+\frac{1}{4A}(x_{1}dx_{1}+x_{3}dx_{3})\right).\]
	By \eqref{christoeffl},  $g_{ij}$ are independent of $x_{0}, x_{2}$ and $g_{ij}, i, j=0,2$ are constant. Then 
	\begin{equation*}
	\begin{split}
	\nabla_{\frac{\partial}{\partial x_{0}}}\theta_{\epsilon}=&-(1+\epsilon^{2})\sum_{p=0}^{3}\left(\frac{x_{1}}{4A}\Gamma_{p0}^{1}+\frac{x_{3}}{4A}\Gamma^{3}_{p0}\right)dx_{p}\\
	=&-(1+\epsilon^{2})\frac{1}{8A}\sum_{p, q=0}^{3}\left((x_{1}g^{1q}(g_{0q,p}-g_{0p,q}))+x_{3}g^{3q}(g_{0q,p}-g_{0p,q})\right)dx_{p}\\
	=&-\sum_{p,q=0}^3(1+\epsilon^{2})\frac{1}{8A}(x_{1}g^{1q}+x_{3}g^{3q})(g_{0q,p}-g_{0p,q})dx_{p}\\
	=&-\sum_{q=1,3}(1+\epsilon^{2})\frac{1}{8A}(x_{1}g^{1q}+x_{3}g^{3q})(g_{0q,1}-g_{01,q})dx_{1}\\
	&-\sum_{q=1,3}(1+\epsilon^{2})\frac{1}{8A}(x_{1}g^{1q}+x_{3}g^{3q})(g_{0q,3}-g_{03,q})dx_{3}.
	\end{split}
	\end{equation*}	
	By \eqref{derivative of metric} and $g^{13}=0$ given by \eqref{real inverse metric}, we have
	\begin{equation*}
	\begin{split}
	-\sum_{q=1,3}(x_{1}g^{1q}+x_{3}g^{3q})(g_{0q,1}-g_{01,q})
	=&(x_{1}g^{13}+x_{3}g^{33})(g_{01,3}-g_{03,1})\\
	=&x_{3}g^{33}\frac{1+\epsilon^{2}}{4}\frac{2\pi}{A^{2}}(x_{3}^{2}-x_{1}^{2}).
	\end{split}
	\end{equation*}
	Then $\nabla_{\frac{\partial}{\partial x_{0}}}\theta_{\epsilon}\not\equiv 0$, as required.
\end{proof}

\begin{rem}
	First, by \cite[Theorem 1]{GO}, the Lee form of  $\omega_{\alpha_1,\alpha_2}=\frac{1}{\Phi_{\alpha_{1},\alpha_{2}}}\ddbar \Phi_{\alpha_{1},\alpha_{2}}$ is parallel. Therefore, there is no diffeomorphism pulling back the metric $\omega_{\alpha_1,\alpha_2}$ to $\omega_{\epsilon}$.
	Second, if
	$\omega_{\alpha_1,\alpha_2}$ is a Gauduchon metric, then its conformal class must be different from that of our metric $\omega_\epsilon$. In fact, by Proposition \ref{gauduchon metric}, $\omega_{\epsilon}$ is Gauduchon.
However, the Gauduchon metric is unique up to a constant scaling in a conformal class (by \cite{GP} or \cite[Theorem 1.2.4]{LT}). 	
\end{rem}

\section{Stability of the toric fibres as harmonic map and minimal surface} Continue to
 let $\pi:\S^3\times\S^1\to\C P^1$ be the Hopf fibration. For any point $p$ in the base $\C P^1$, denote $\T^2_p=\pi^{-1}(p)$ and $p_0,p_1$ for the two special tori defined in the introduction. As the fibre $\T^2_p$ is a complex submanifold, let $c_p$ be the conformal structure on it such that the inclusion mapping $f_p:\T^2_p\hookrightarrow\S^3\times\S^1$ is holomorphic with respect to $c_p$. It is well known that harmonicity of a mapping from a Riemann surface $\Sigma$ only depends on the conformal class $[c]$ of $\Sigma$, not on specific metrics within $[c]$. This leads to further simplification of \eqref{harmonic map}, when the domain is a Riemann surface, since we can use the isothermal coordinates associated with $[c]$. 
In fact, the complex structure of $\T^2_p$ is the standard one on the unit square torus $\T^2$ by Proposition \ref{complex coordinates} and the induced metric is 
the standard metric on $\T^2$ scaled by $(1+\epsilon^2)\pi^2$ by \eqref{induced metric 1}. 

	
\subsection{Curvature of $(\S^3\times\S^1, {g}_\epsilon)$}
	We have constructed a family of Hermitian metrics $g_\epsilon$ for $\epsilon\geq 0$ on $\S^3\times\S^1$. We will be interested in the isotropic curvatures of the isotropic 2-planes spanned by $T^{1,0}$-vectors. The observation is that each of these curvatures is nonnegative and vanish exactly on the two special tori $\T^2_{p_0},\T^2_{p_1}$. The properties of the Riemannian curvature tensor contained in Lemma \ref{curvature} are crucial for our stability analysis. 
	
	Let us first consider a local picture. In $V_{00}$, the inclusion map $\S^3\times\S^1\hookrightarrow\C^2\times\C^1$ can be written as
\begin{equation}\label{2d map}
\left\{\begin{aligned}
z_{0}\ &=\ A^{-\frac{1}{2}}e^{\ii\pi(w_{0}+\overline{w}_{0})},\\
z_{1}\ &=\ z_{0}w_{1}=w_{1}A^{-\frac{1}{2}}e^{\ii\pi(w_{0}+\overline{w}_{0})},\\
z'_{0}\ &=\ e^{\pi(w_{0}-\overline{w}_{0})-\frac{1}{2}\ii\log A}
\end{aligned}\right.
\end{equation}	
with $A=1+|w_1|^2$. When $w_1=0$, we get $(z_0,z_1,z'_0)=(e^{\ii\pi(w_0+\overline{w}_0)},0,e^{\ii(w_0-\overline{w_0})})$ tracing a square torus $\S^1\times\{0\}\times\S^1\subset\C\times\C\times\C$. Similarly, in $V_{10}$ there is another square torus $\{0\}\times\S^1\times\S^1\subset\C\times\C\times\C$.
In fact, these are the only tori in the holomorphic toric fibration $\pi$ as a direct product of a great circle in $\S^3$ with $\S^1$. It turns out that the isotropic curvature vanishes on and only on these two square tori in $\S^3\times\S^1\subset\C^2\times\C$.

Let $w_0=x_0+\ii x_2,w_1=x_1+\ii x_3$. Set $R_{ijkl}=R\left(\frac{\partial }{\partial x_{i}},\frac{\partial }{\partial x_{j}},\frac{\partial }{\partial x_{k}},\frac{\partial }{\partial x_{l}}\right)$, where
\begin{equation*}
\begin{split}
R_{ijkl}&=-g\left(\nabla_{\frac{\partial}{\partial x_{i}}}\nabla_{\frac{\partial}{\partial x_{j}}}\frac{\partial}{\partial x_{k}}-\nabla_{\frac{\partial}{\partial x_{j}}}\nabla_{\frac{\partial}{\partial x_{i}}}\frac{\partial}{\partial x_{k}}
-\nabla_{[\frac{\partial}{\partial x_{i}}, \frac{\partial}{\partial x_{j}}]}\frac{\partial}{\partial x_{k}}, \frac{\partial}{\partial x_{l}}\right)\\
&=-\frac{1}{2}\left(g_{ik,jl}+g_{jl,ik}-g_{jk,il}-g_{il,jk}\right)-\Gamma^{p}_{ik}\Gamma^{q}_{jl}g_{pq}+\Gamma^{p}_{jk}\Gamma^{q}_{il}g_{pq}.
\end{split}
\end{equation*}

First we prove
\begin{lem}\label{curvature}
\begin{equation}\label{section curvature}
\begin{aligned}
R_{0101}&=\Gamma^{p}_{10}\Gamma^{q}_{10}g_{pq}=\frac{1}{4}g^{33}(g_{03,1}-g_{01,3})^{2},\\
R_{0303}&=\Gamma^{p}_{30}\Gamma^{q}_{30}g_{pq}=\frac{1}{4}g^{11}\left(g_{30,1}-g_{10,3}\right)^{2},\\
R_{2121}&=\Gamma^{p}_{12}\Gamma^{q}_{12}g_{pq}=\frac{1}{4}g^{33}\left(g_{21,3}-g_{32,1}\right)^{2},\\
R_{2323}&=\Gamma^{p}_{32}\Gamma^{q}_{32}g_{pq}=\frac{1}{4}g^{11}\left(g_{21,3}-g_{32,1}\right)^{2},\\
R_{0103}&=\Gamma^{p}_{01}\Gamma^{q}_{03}g_{pq}=0,\\
R_{2123}&=\Gamma^{p}_{21}\Gamma^{q}_{23}g_{pq}=0\\
\end{aligned}
\end{equation}
and
\begin{equation*}
R_{0213}=0, \ \ R_{0321}=0, \ \ R_{0123}=0.
\end{equation*}
\end{lem}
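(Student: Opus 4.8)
The plan is to compute everything directly from the explicit metric \eqref{induced metric 1} written out in the real coordinates $x_0,x_1,x_2,x_3$ (where $w_0=x_0+\ii x_3$ on the fibre direction and $w_1=x_1+\ii x_3$ on the base — here I will use the paper's labelling $w_0=x_0+\ii x_2$, $w_1=x_1+\ii x_3$). Since $g_\epsilon$ depends only on $w_1,\overline w_1$, i.e.\ only on $x_1,x_3$, and is constant along $x_0,x_2$, many Christoffel symbols vanish and the curvature formula
$$
R_{ijkl}=-\tfrac12\bigl(g_{ik,jl}+g_{jl,ik}-g_{jk,il}-g_{il,jk}\bigr)-\Gamma^{p}_{ik}\Gamma^{q}_{jl}g_{pq}+\Gamma^{p}_{jk}\Gamma^{q}_{il}g_{pq}
$$
collapses substantially. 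The first step is to record the real metric matrix $(g_{ij})$ and its inverse, in particular to note the block structure: the $x_0,x_2$ block is constant, the entries $g_{0q},g_{2q}$ are the only ones producing nontrivial derivatives, and crucially $g^{13}=0$ (this is \eqref{real inverse metric}, which I may cite). I would then tabulate the derivatives $g_{ij,k}$ — a short finite list since only $\partial_{x_1},\partial_{x_3}$ act nontrivially — this is the content referenced as \eqref{derivative of metric}.

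The second step handles the six identities in \eqref{section curvature}. For a pair of indices like $(0,1)$: because $g_{00}$ is constant and $g_{01}$ depends only on $x_1,x_3$, the second-derivative terms $g_{00,11},g_{11,00},g_{01,01},g_{01,01}$ all vanish, so $R_{0101}=-\Gamma^p_{00}\Gamma^q_{11}g_{pq}+\Gamma^p_{10}\Gamma^q_{10}g_{pq}$; since $g_{00}$ constant and independent of all variables forces $\Gamma^p_{00}=0$, only the $\Gamma^p_{10}\Gamma^q_{10}g_{pq}$ term survives. Then I expand $\Gamma^p_{10}=\tfrac12 g^{pq}(g_{1q,0}+g_{0q,1}-g_{10,q})=\tfrac12 g^{pq}(g_{0q,1}-g_{10,q})$ using $g_{1q,0}=0$, and because $g^{13}=0$ and the relevant derivatives only involve the $q=1,3$ slots, the sum over $p$ reduces to a single term giving $\tfrac14 g^{33}(g_{03,1}-g_{01,3})^2$. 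The other five lines are obtained by the same mechanism with indices permuted, using the symmetry $x_0\leftrightarrow x_2$, $x_1\leftrightarrow x_3$ of the metric form; I would present one case in detail and indicate the rest follow verbatim. The vanishing of $R_{0103}$ and $R_{2123}$ comes from the fact that $\Gamma^p_{03}=\tfrac12 g^{pq}(g_{0q,3}-g_{03,q})$ pairs against $\Gamma^q_{01}$ and the resulting contraction, after using $g^{13}=0$, turns out to contain $(g_{03,1}-g_{01,3})$ times $(g_{01,3}-g_{03,1})$-type antisymmetric factors that cancel — the key input being the explicit symmetry $g_{0\bar1}=\overline{g_{1\bar 0}}$ translated to real form, giving $g_{01,3}-g_{03,1}$ proportional to something that kills the cross term.

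The third step is the three extra vanishings $R_{0213}=R_{0321}=R_{0123}=0$. Here $0,2$ are both "fibre" indices and $1,3$ both "base" indices, so again no second-derivative terms appear (each product of differentiated-vs-constant entries is zero or the mixed second derivatives $g_{\cdot\cdot,\cdot\cdot}$ vanish because at most one index can be a base index in each factor while the differentiation is in base directions). What remains is $-\Gamma^p_{02}\Gamma^q_{13}g_{pq}+\Gamma^p_{12}\Gamma^q_{03}g_{pq}$ for $R_{0213}$, etc.; one checks $\Gamma^p_{02}=\tfrac12 g^{pq}(g_{0q,2}+g_{2q,0}-g_{02,q})=0$ since all three derivatives are along $x_0$ or $x_2$ directions of entries depending only on $x_1,x_3$, so that entire term drops, and the surviving $\Gamma^p_{12}\Gamma^q_{03}g_{pq}$ must be shown to vanish by the $g^{13}=0$ argument again. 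I expect the main obstacle to be purely organizational: keeping the index bookkeeping straight across the nine cases and making sure the real-coordinate translation of the Hermitian symmetry is applied consistently so that the antisymmetric combinations $g_{0q,p}-g_{0p,q}$ (which are the only Christoffel ingredients that appear) are handled uniformly. There is no conceptual difficulty — the metric is explicit and low-dimensional — so the proof is a disciplined but finite computation, and I would structure it by first proving $\Gamma^p_{ij}=0$ whenever $i,j\in\{0,2\}$, then reducing each $R_{ijkl}$ to at most one Christoffel-square term, then invoking $g^{13}=0$ to collapse that term to the stated scalar expression or to zero.
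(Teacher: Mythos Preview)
Your overall strategy matches the paper's proof exactly: exploit that $g_{ij}$ depends only on $x_1,x_3$, deduce $\Gamma^p_{00}=\Gamma^p_{02}=\Gamma^p_{22}=0$, reduce each $R_{ijkl}$ to a Christoffel bilinear, and then collapse using $g^{13}=0$ together with the observation that $g_{0p,1}-g_{01,p}$ (resp.\ $g_{0p,3}-g_{03,p}$, $g_{2p,1}-g_{21,p}$, $g_{2p,3}-g_{23,p}$) is nonzero only for a single value of $p$.

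There is, however, a concrete index error in your treatment of $R_{0213}$. With the curvature formula you quoted and $(i,j,k,l)=(0,2,1,3)$, the surviving Christoffel terms are
\[
-\Gamma^{p}_{01}\Gamma^{q}_{23}\,g_{pq}+\Gamma^{p}_{21}\Gamma^{q}_{03}\,g_{pq},
\]
not $-\Gamma^{p}_{02}\Gamma^{q}_{13}g_{pq}+\Gamma^{p}_{12}\Gamma^{q}_{03}g_{pq}$ as you wrote. Neither of the correct products contains a factor $\Gamma^p_{02}$, so nothing drops ``for free''; both products must be shown to vanish separately. The paper does this: the first reduces to $-\tfrac14 g^{13}(g_{03,1}-g_{01,3})(g_{21,3}-g_{23,1})=0$ via $g^{13}=0$, while the second reduces (after using $g^{13}=0$ again) to $\tfrac14 g^{11}(g_{21,1}-g_{12,1})(g_{01,3}-g_{03,1})=0$ because $g_{21,1}-g_{12,1}=0$ trivially. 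Your vague remark for $R_{0103}$ about ``antisymmetric factors that cancel'' is also slightly off: nothing cancels; the contraction is simply $-\tfrac14 g^{13}(g_{03,1}-g_{01,3})^2$, which vanishes because $g^{13}=0$. Fix these two points and your proof goes through verbatim as in the paper.
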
		
\begin{proof}
	The underlying Riemannian metric of the Hermitian metric $g_\epsilon$ is given locally in $U_{00}$ by the $4\times 4$ real matrix (now $0\leq i,j\leq 3$)
	\begin{equation}\label{metric real}
	\left(g_{ij}\right)=\frac{1}{2}
	\begin{pmatrix}
	&(1+\epsilon^{2})\pi^2\ &\  -\frac{1+\epsilon^{2}}{2}  \frac{\pi x_{3}}{A}&0 &-\frac{1+\epsilon^{2}}{2} \frac{\pi x_{1}}{A} \\[3mm]
	&-\frac{1+\epsilon^{2}}{2} \frac{\pi x_{3}}{A} \ \ \ &  \ \ \  \frac{1}{4A}+\frac{3+\epsilon^{2}|w_1|^2}{4A^2}\ & \frac{1+\epsilon^{2}}{2}\frac{\pi x_{1}}{A} &\ 0\\[3mm]
	&0 &\frac{1+\epsilon^{2}}{2} \frac{\pi x_1}{A}\ &\ \ (1+\epsilon^{2}) \pi^2\ &\  -\frac{1+\epsilon^{2}}{2}  \frac{\pi x_3}{A} \\[3mm]
	&-\frac{1+\epsilon^{2}}{2} \frac{\pi x_{1}}{A}\ &0&\ \   -\frac{1+\epsilon^{2}}{2}  \frac{\pi x_3}{A}\ \ \ &\ \ \ \ \ \frac{1}{4A}+\frac{3+\epsilon^{2}|w_1|^2}{4A^2}\\[3mm]
	\end{pmatrix}.
	\end{equation}
	We also have
		\begin{equation}\label{real inverse metric}
	\left(g_{ij}\right)^{-1}=\frac{2A^{2}}{(1+\epsilon^{2})\pi^{2}}
	\begin{pmatrix}
	&\frac{1}{4A}+\frac{3+\epsilon^{2}|w_1|^2}{4A^2}\ &\  \frac{1+\epsilon^{2}}{2}  \frac{\pi x_{3}}{A}&0 &\frac{1+\epsilon^{2}}{2} \frac{\pi x_{1}}{A} \\[3mm]
	&\frac{1+\epsilon^{2}}{2}  \frac{\pi x_{3}}{A} \ \ \ &  \ \ \ (1+\epsilon^{2})\pi^2\   & -\frac{1+\epsilon^{2}}{2}\frac{\pi x_{1}}{A} &\ 0\\[3mm]
	&0 &-\frac{1+\epsilon^{2}}{2} \frac{\pi x_1}{A}\ &\ \ \ \ \frac{1}{4A}+\frac{3+\epsilon^{2}|w_1|^2}{4A^2}\ &\  \frac{1+\epsilon^{2}}{2}  \frac{\pi x_3}{A} \\[3mm]
	&\frac{1+\epsilon^{2}}{2} \frac{\pi x_{1}}{A}\ &0&\ \   \frac{1+\epsilon^{2}}{2} \frac{\pi x_3}{A}\ \ \ &\ \ \ \ \ (1+\epsilon^{2})\pi^2\\[3mm]
	\end{pmatrix}.
	\end{equation}
	
	Now, we compute the five curvature terms in \eqref{isotropic}. First, we calculate $R_{0101}$. Evidently the entries $g_{ij}$ are independent of $x_{0}$ and $x_{2}$. Therefore
	\begin{equation}\label{christoeffl}
	\begin{aligned}
	&\Gamma_{00}^{i}=\frac{1}{2}g^{il}(g_{0l, 0}+g_{0l, 0}-g_{00,l})=-\frac{1}{2}g^{il}g_{00,l}=0,\\
	&\Gamma^i_{02} =\frac{1}{2}g^{il}\left(g_{0l,2}+g_{2l,0}-g_{02,l}\right)=-\frac{1}{2} g^{il}g_{02,l}=0,\\
	&\Gamma^i_{22}=\frac{1}{2}g^{il}\left(2g_{2l,2}-g_{22,l}\right)=-\frac{1}{2}g^{il}g_{22,l}=0.
	\end{aligned}
	\end{equation}
	Then, as $g_{pq}$ are constant for $p,q=0,2$ and $g_{ij,0},g_{ij,2}=0$, we have
	\begin{equation*}
	\begin{split}
	R_{0101}&=-\frac{1}{2}\left(g_{00,11}+g_{11,00}-g_{10,10}-g_{10,01}\right)-\Gamma^{p}_{00}\Gamma^{q}_{11}g_{pq}+\Gamma^{p}_{10}\Gamma^{q}_{10}g_{pq}\\
	&=\Gamma^{p}_{10}\Gamma^{q}_{10}g_{pq}\\
	&=\frac{1}{4}\sum_{p,q=0,1,2,3}g^{pq}(g_{1p,0}+g_{0p,1}-g_{01,p})(g_{1q,0}+g_{0q,1}-g_{01,q})\\
	&=\frac{1}{4}\sum_{p,q=1,3}g^{pq}(g_{0p,1}-g_{01,p})(g_{0q,1}-g_{01,q})\\
	&=\frac{1}{4}g^{33}(g_{03,1}-g_{01,3})^{2}.\\
	\end{split}
	\end{equation*}
	By the symmetry of the indices, we have
	\begin{equation}\label{curvature computation}
	\begin{aligned}
	R_{0303}&=\Gamma^{p}_{30}\Gamma^{q}_{30}g_{pq}=\frac{1}{4}g^{11}\left(g_{30,1}-g_{10,3}\right)^{2},\\
	R_{2121}&=\Gamma^{p}_{12}\Gamma^{q}_{12}g_{pq}=\frac{1}{4}g^{33}\left(g_{21,3}-g_{32,1}\right)^{2},\\
	R_{2323}&=\Gamma^{p}_{32}\Gamma^{q}_{32}g_{pq}=\frac{1}{4}g^{11}\left(g_{21,3}-g_{32,1}\right)^{2}.\\
	\end{aligned}
	\end{equation}
	In addition, by $g^{13}=0$, we get
	\begin{eqnarray*}
		\ &R_{0213}&=-\Gamma^{p}_{01}\Gamma^{q}_{23}g_{pq}+\Gamma^{p}_{12}\Gamma^{q}_{03}g_{pq} \nonumber\\
		&&=-\frac{1}{4}g^{ps}(g_{0s,1}+g_{1s,0}-g_{01,s})g^{qt}(g_{2t,3}+g_{3t,2}-g_{23,t})g_{pq}\nonumber\\
		&&\ \ \ \ +\frac{1}{4}g^{ps}(g_{1s,2}+g_{2s,1}-g_{12,s})g^{qt}(g_{0t,3}+g_{3t,0}-g_{03,t})g_{pq}\label{1 derivative}\\
		&&= -\frac{1}{4}g^{ps}(g_{0s,1}-g_{01,s})(g_{2p,3}-g_{23,p})+\frac{1}{4}g^{ps}(g_{2s,1}-g_{12,s})(g_{0p,3}-g_{03,p})\nonumber\\
		&&=-\frac{1}{4}g^{13}(g_{03,1}-g_{01,3})(g_{21,3}-g_{23,1})+\frac{1}{4}\sum_{s=1,3}g^{1s}(g_{2s,1}-g_{12,s})(g_{01,3}-g_{03,1})\nonumber\\
		&&=\frac{1}{4}g^{11}(g_{21,1}-g_{12,1})(g_{01,3}-g_{03,1})\nonumber=0. \nonumber
	\end{eqnarray*}
and 
	\begin{equation*}
\begin{split}
R_{0103}&=\Gamma^{p}_{10}\Gamma^{q}_{30}g_{pq}\\
&=\frac{1}{4}\sum_{p,q=0,1,2,3}g^{pq}(g_{1p,0}+g_{0p,1}-g_{01,p})(g_{3q,0}+g_{0q,3}-g_{03,q})\\
&=\frac{1}{4}\sum_{p,q=1,3}g^{pq}(g_{0p,1}-g_{01,p})(g_{0q,3}-g_{03,q})=-\frac{1}{4}g^{13}(g_{03,1}-g_{01,3})^{2}=0.\\
\end{split}
\end{equation*}
Similarly, we obtain $ R_{2123}=0,$ $R_{0321}=g^{13}(g_{03,1}-g_{01,3})(g_{21,3}-g_{31,1})=0,$ $R_{0123}=g^{13}(g_{01,3}-g_{03,1})(g_{23,1}-g_{21,1})=0.$
\end{proof}		
We also have:
	\begin{prop}\label{isotropy curvature}
		For any linearly independent $X, Y\in T^{1, 0}_p(\S^{3}\times \S^{1})$, $R\left(X, Y,\overline{X},\overline{Y}\right)\geq 0$ where $R$ is the Riemannian curvature operator and $p\in \S^{3}\times \S^{1}$, it equals to zero at $p\in \S^{3}\times \S^{1}$ if and only if $p\in\left(\S^{3}\cap (\{z_{0}=0\}\cup\{z_{1}=0\})\right)\times \S^{1} $.

\end{prop}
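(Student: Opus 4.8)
The plan is to reduce the Hermitian form $R(X,Y,\overline X,\overline Y)$ to a single scalar and then feed in Lemma \ref{curvature}. Work in the chart $U_{00}$, where $T^{1,0}(\S^{3}\times\S^{1})$ is spanned by $\partial/\partial w_{0},\partial/\partial w_{1}$, and write $X=\xi^{0}\frac{\partial}{\partial w_{0}}+\xi^{1}\frac{\partial}{\partial w_{1}}$, $Y=\eta^{0}\frac{\partial}{\partial w_{0}}+\eta^{1}\frac{\partial}{\partial w_{1}}$. By multilinearity and the antisymmetry of $R$ in its first two and its last two arguments (only the index pair $\{0,1\}$ contributes in each slot-pair), one gets
\[
R(X,Y,\overline X,\overline Y)=|\xi^{0}\eta^{1}-\xi^{1}\eta^{0}|^{2}\,Q,\qquad Q:=R\Big(\tfrac{\partial}{\partial w_{0}},\tfrac{\partial}{\partial w_{1}},\tfrac{\partial}{\partial\overline w_{0}},\tfrac{\partial}{\partial\overline w_{1}}\Big),
\]
and $X,Y$ are linearly independent exactly when $\xi^{0}\eta^{1}-\xi^{1}\eta^{0}\neq0$. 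So it suffices to show $Q\ge 0$ and to locate $\{Q=0\}$.

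To evaluate $Q$, substitute $\frac{\partial}{\partial w_{0}}=\tfrac12\big(\frac{\partial}{\partial x_{0}}-\i\frac{\partial}{\partial x_{2}}\big)$ and $\frac{\partial}{\partial w_{1}}=\tfrac12\big(\frac{\partial}{\partial x_{1}}-\i\frac{\partial}{\partial x_{3}}\big)$ together with their conjugates, expand the sixteen resulting terms, and use the Riemann symmetries $R_{ijkl}=-R_{jikl}=-R_{ijlk}=R_{klij}$; the imaginary contributions cancel and one is left with the isotropic-curvature expression
\[
16\,Q=R_{0101}+R_{0303}+R_{2121}+R_{2323}+2R_{0321}-2R_{0123}.
\]
By Lemma \ref{curvature}, $R_{0321}=R_{0123}=0$, and each of $R_{0101},R_{0303},R_{2121},R_{2323}$ is, as computed there, $\tfrac14$ times a positive coefficient (namely $g^{11}$ or $g^{33}$, both equal to $2A^{2}>0$ by \eqref{real inverse metric}) times a perfect square. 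Hence $Q\ge 0$, i.e.\ $R(X,Y,\overline X,\overline Y)\ge 0$.

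For the equality statement, $R(X,Y,\overline X,\overline Y)=0$ with $X,Y$ linearly independent forces $Q=0$, hence (using $g_{30}=g_{03}$, $g_{10}=g_{01}$) $g_{03,1}-g_{01,3}=0$ and $g_{21,3}-g_{32,1}=0$. Differentiating the entries of \eqref{metric real}, with $A=1+x_{1}^{2}+x_{3}^{2}$, one gets
\[
g_{03,1}-g_{01,3}=\frac{(1+\epsilon^{2})\pi}{2A^{2}}(x_{1}^{2}-x_{3}^{2}),\qquad g_{21,3}-g_{32,1}=-\frac{(1+\epsilon^{2})\pi}{A^{2}}\,x_{1}x_{3},
\]
and these vanish simultaneously precisely when $x_{1}=x_{3}=0$, i.e.\ $w_{1}=0$; in $U_{00}$, where $z_{0}\neq0$, this is the set $\{z_{1}=z_{0}w_{1}=0\}$. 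The chart $V_{10}$ is carried onto $V_{00}$ by the biholomorphic isometry interchanging $z_{0}$ and $z_{1}$, so the same computation there gives the zero set $\{z_{0}=0\}$; since $V_{00}\cup V_{10}=\S^{3}\times\S^{1}$, the zero locus of $Q$ — equivalently of $R(X,Y,\overline X,\overline Y)$ over linearly independent pairs — is $\big(\{z_{0}=0\}\cup\{z_{1}=0\}\big)\cap(\S^{3}\times\S^{1})=\big(\S^{3}\cap(\{z_{0}=0\}\cup\{z_{1}=0\})\big)\times\S^{1}$.

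The main obstacle is the bookkeeping in the reduction to $16\,Q=R_{0101}+R_{0303}+R_{2121}+R_{2323}+2R_{0321}-2R_{0123}$: one has to follow all sixteen terms with their conjugates and check, via the Riemann symmetries, that every imaginary part together with the mixed terms $R_{0103}$ and $R_{0121}$ disappears, so that after Lemma \ref{curvature} eliminates $R_{0321}$ and $R_{0123}$ only the four nonnegative squares survive; the remaining work is a short explicit differentiation of \eqref{metric real} and reading off \eqref{real inverse metric}.
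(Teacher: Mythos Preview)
Your proof is correct and follows essentially the same route as the paper's: reduce to the single scalar $Q=R(\partial_{w_0},\partial_{w_1},\partial_{\overline w_0},\partial_{\overline w_1})$ via the $2$-dimensionality of $T^{1,0}$, expand $Q$ in real coordinates, invoke Lemma \ref{curvature} to see it as a sum of nonnegative squares, and read off the zero locus from the explicit derivatives \eqref{derivative of metric}. The only cosmetic differences are that the paper writes the cross term as $-2R_{0213}$ (equal to your $2R_{0321}-2R_{0123}$ by the first Bianchi identity, and in any case zero by Lemma \ref{curvature}), drops the overall factor $1/16$, and packages the vanishing condition as $(x_1^2+x_3^2)^2=0$ rather than the equivalent pair $x_1^2-x_3^2=0$, $x_1x_3=0$.
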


\begin{proof} Here and below we omit $\epsilon$ for simplicity of expressions.
	We first confine to the complex coordinate chart $(U_{00}, (w_0,w_1))$.

 Assume $X=a\frac{\partial}{\partial w_{0}}+b\frac{\partial}{\partial w_{1}}$ and $Y=
c\frac{\partial}{\partial w_{0}}+d\frac{\partial}{\partial w_{1}}$, for some complex numbers $a,b,c,d$ with $ab-cd\not=0$.
Then
$$
R\left(X, Y,\overline{X}, \overline{Y}\right)=|ad-bc|^{2}R\left(\frac{\partial}{\partial w_{0}}, \frac{\partial}{\partial w_{1}},{\frac{\partial}{\partial \overline w_{0}}},{\frac{\partial}{\partial \overline w_{1}}}\right).
$$

It suffices to prove that $R\left(\frac{\partial}{\partial w_{0}}, \frac{\partial}{\partial w_{1}},\frac{\partial}{\partial \overline{w}_{0}},\frac{\partial}{\partial \overline{w}_{1}}\right)\geq 0$.

Note that
\begin{equation}\label{isotropic}
\begin{split}
R\left(\frac{\partial}{\partial w_{0}}, \frac{\partial}{\partial w_{1}},\frac{\partial}{\partial \overline{w}_{0}},\frac{\partial}{\partial \overline{w}_{1}}\right)=&R\left(\frac{\partial }{\partial x_{0}},\frac{\partial }{\partial x_{1}},\frac{\partial }{\partial x_{0}},\frac{\partial }{\partial x_{1}}\right)+R\left(\frac{\partial }{\partial x_{0}},\frac{\partial }{\partial x_{3}},\frac{\partial }{\partial x_{0}},\frac{\partial }{\partial x_{3}}\right)\\
+R\left(\frac{\partial }{\partial x_{2}},\frac{\partial }{\partial x_{1}},\frac{\partial }{\partial x_{2}},\frac{\partial }{\partial x_{1}}\right)
&+R\left(\frac{\partial }{\partial x_{2}},\frac{\partial }{\partial x_{3}},\frac{\partial }{\partial x_{2}},\frac{\partial }{\partial x_{3}}\right)-2R\left(\frac{\partial }{\partial x_{0}},\frac{\partial }{\partial x_{2}},\frac{\partial }{\partial x_{1}},\frac{\partial }{\partial x_{3}}\right)
\end{split}
\end{equation}
where $w_0=x_0+\ii x_2,w_1=x_1+\ii x_3$.

To locate the zero locus of the curvature, we observe
\begin{equation}\label{derivative of metric}
\left\{\begin{aligned}
&g_{30,1}=\frac{1+\epsilon^{2}}{4} \left(-\frac{\pi}{A}+\frac{2\pi}{ A^2}x_{1}^{2}\right),\\
&g_{10,3}=\frac{1+\epsilon^{2}}{4}\left(-\frac{\pi}{A} +\frac{2\pi}{ A^2}x_{3}^{2}\right),\\
&g_{21,3}=-g_{32,1}=-\frac{1+\epsilon^{2}}{2}\frac{\pi}{ A^2}x_{1}x_{3}.
\end{aligned}\right.
\end{equation}
Hence, by Lemma \ref{curvature},
\begin{equation}
\begin{split}
R\left(\frac{\partial}{\partial w_{0}}, \frac{\partial}{\partial w_{1}},\frac{\partial}{\partial \overline{w}_{0}},\frac{\partial}{\partial \overline{w}_{1}}\right)&=\frac{1}{2}g^{11}\left((g_{03,1}-g_{01,3})^{2}+(g_{21,3}-g_{32,1})^{2}\right)\\
&=\frac{1}{2}g^{11}\frac{\pi^{2}(1+\epsilon^{2})^{2}}{4A^{4}}\left(x_{1}^{2}+x^{2}_{3}\right)^{2}\geq 0.
\end{split}
\end{equation}
It follows $R\left(\frac{\partial}{\partial w_{0}}, \frac{\partial}{\partial w_{1}},\frac{\partial}{\partial \overline{w}_{0}},\frac{\partial}{\partial \overline{w}_{1}}\right)=0$
if and only if
\begin{equation*}
x_{1}^{2}+x_{3}^{2}=0
\end{equation*}
that is $w_1=0$ hence $z_{1}=0$. Similarly, we have $z_0=0$ on $V_{10}$.

Therefore, the curvature vanishes on and only on $\S^3\cap\{z_1=0\} \times\S^1$ and $\S^3\cap\{z_0=0\}\times\S^1$. Both are embedded complex tori in $\S^3\times\S^1$.
\end{proof}

Theorem \ref{main1} follows from results established in this section.

\subsection{Stability of harmonic maps from $\T^2$ to the Hermitian Hopf surfaces}

Let $f:M\to N$ be a smooth mapping from a K\"ahler manifold $M$ with metric tensor $h_{\alpha\overline\beta}dz^\alpha d\overline{z}^\beta$ to a Riemannian manifold $N$ with metric tensor $g_{ij}dy^idy^j$. Using the complex coordinates on $M$, Sampson wrote the harmonic map equation \cite[p.129]{Sampson} as
\begin{equation}\label{harmonic map}
h^{\alpha\overline\beta}\left(\frac{\partial^2 f^i}{\partial z^\alpha\partial\overline{z}^\beta}+\Gamma^i_{jk}\frac{\partial f^j}{\partial z^\alpha}\frac{\partial f^k}{\partial z^{\overline\beta}}\right)=0
\end{equation}
where $\Gamma^i_{jk}$ are the Christoffel symbols of the Levi-Civita connection of $(N,g)$. This is convenient for verification of harmonicity of the holomorphic maps from tori in Theorem \ref{main3}.

\begin{theo}\label{harmonic torus}
 Each $f_p$ is a harmonic map from $(\T^2_p,c_p)$ to $(\S^3\times\S^1, {g}_\epsilon)$. There exist neighbourhoods of $p_0,p_1$ respectively such that $f_p$ is a stable harmonic map when $p$ belongs to the neighbourhoods.
 \end{theo}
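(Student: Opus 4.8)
The plan is to establish harmonicity first and then stability, using the explicit geometry of $g_\epsilon$ recorded in Section 3.

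\textbf{Harmonicity.} First I would verify that each $f_p$ is harmonic. Since the domain $\T^2_p$ is a Riemann surface and harmonicity depends only on the conformal class, I will use the flat coordinates $(x_0,x_2)$ on $\T^2_p$ coming from $w_0$, so $h^{\alpha\bar\beta}$ is (a constant multiple of) the identity. The map $f_p$ is the holomorphic inclusion sending $w_0\mapsto(w_0,c)$ with $w_1=c$ constant, so $\partial f^i/\partial w_0$ has only the $\partial/\partial w_0$-component. Plugging into Sampson's equation \eqref{harmonic map}, the only thing to check is that the relevant Christoffel term $\Gamma^i_{00}$ (in the $x_0,x_2$ directions, more precisely the combination $\Gamma^i_{00}+\Gamma^i_{22}$ coming from $h^{0\bar 0}+h^{2\bar 2}$ paired against $\partial f\,\partial\bar f$) vanishes along the fibre. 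But by \eqref{christoeffl} we already know $\Gamma^i_{00}=\Gamma^i_{02}=\Gamma^i_{22}=0$ identically in $U_{00}$, because $g_{ij}$ is independent of $x_0,x_2$ and $g_{ij}$ with $i,j\in\{0,2\}$ is constant. Hence \eqref{harmonic map} holds trivially; equivalently, $\T^2_p$ is totally geodesic. One must also note $U_{00}$ does not cover $\T^2_{p_1}$ and $U_{10}$ does not cover $\T^2_{p_0}$, but the analogous computation holds in the other chart.

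\textbf{Stability near $p_0,p_1$.} For stability I would write down the second variation (Morse index) form for the energy of $f_p$. For a harmonic map from a surface, along a normal variation field $V$ this is $I(V,V)=\int_{\T^2_p}\big(|\D V|^2-\sum_\alpha R(V,df(e_\alpha),V,df(e_\alpha))\big)$, where $\{e_\alpha\}$ is an orthonormal frame of $\T^2_p$ (here just $\partial/\partial x_0,\partial/\partial x_2$ rescaled by $(1+\epsilon^2)^{-1/2}\pi^{-1}$). The curvature term is the ``bad'' (positive, destabilizing) term, and the whole point of Lemma \ref{curvature} and Proposition \ref{isotropy curvature} is that these curvature components are controlled: on the fibres $\T^2_{p_0},\T^2_{p_1}$ the relevant curvatures vanish outright (giving $I(V,V)=\int|\D V|^2\ge 0$ immediately, hence stability there), and for $p$ in a small neighbourhood of $p_0$ or $p_1$ the curvature components are $O(|w_1|^2)$ small uniformly, by the explicit formulas \eqref{derivative of metric} and \eqref{curvature computation}. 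The plan is then to bound the curvature integrand pointwise by $C|w_1|^2\,|V|^2$ and absorb it: either (a) use a Poincar\'e-type inequality on the flat torus $\T^2_p$ for the component of $V$ orthogonal to parallel fields, together with the fact that parallel normal fields contribute zero to both terms — but one must check which normal directions are parallel — or more robustly (b) decompose $V$ into tangential-to-$\S^3$ and $\S^1$ parts and estimate directly, showing $\int|\D V|^2 \ge c\int|V|^2$ on the relevant subspace with $c$ bounded below independent of $p$ near $p_0,p_1$, while the curvature term is $\le C|w_1|^2\int|V|^2$; choosing the neighbourhood $U_0,U_1$ so small that $C|w_1|^2<c$ forces $I(V,V)\ge 0$.

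\textbf{Expected main obstacle.} The delicate point is the contribution of variation fields $V$ that are (nearly) parallel along $\T^2_p$, for which $\int|\D V|^2$ is small or zero and cannot dominate the curvature term by a naive spectral gap argument. One must identify precisely the space of parallel normal fields — using that $g_0$ and $J$ are parallel for the Chern connection and that the two directions $\partial/\partial x_0,\partial/\partial x_2$ along the fibre are covariantly constant — and verify that along exactly those directions the destabilizing curvature $R(V,df(e_\alpha),V,df(e_\alpha))$ also vanishes (or is nonpositive) near $p_0,p_1$; this is exactly where the vanishing pattern in Lemma \ref{curvature} (all the mixed terms $R_{0103},R_{0213},R_{0321},R_{0123}$ being zero, not just small) is essential. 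Once the parallel directions are handled by this exact cancellation and the remaining directions by the spectral gap versus the $O(|w_1|^2)$ curvature bound, choosing $U_0,U_1$ small completes the proof; the fibres $\T^2_{p_0},\T^2_{p_1}$ themselves are the limiting case where the curvature term is identically zero.
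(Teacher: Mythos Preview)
Your harmonicity argument is essentially the paper's: Sampson's equation plus $\Gamma^i_{00}=\Gamma^i_{02}=\Gamma^i_{22}=0$ from \eqref{christoeffl}.

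For stability, your overall shape (Poincar\'e plus smallness near $p_0,p_1$) is right, but you have not found the mechanism that actually makes it close. The paper does \emph{not} bound the curvature by $C|w_1|^2|V|^2$ and race it against a spectral gap. Instead it uses the exact identity hidden in Lemma~\ref{curvature}: since $g_{ij}$ is independent of $x_0,x_2$ and $g_{00},g_{02},g_{22}$ are constant, one has literally $R_{0i0j}=\Gamma^p_{0i}\Gamma^q_{0j}g_{pq}$ (and similarly with $x_2$). Writing $\nabla_{\partial/\partial x_0}V=V_1+V_2$ with $V_1=\sum a_i\nabla_{\partial/\partial x_0}\partial/\partial x_i$ and $V_2=\sum(\partial a_i/\partial x_0)\partial/\partial x_i$, this gives the pointwise equality
\[
|V_1|^2_g \;=\; R\!\left(\tfrac{\partial f_p}{\partial x_0},V,\tfrac{\partial f_p}{\partial x_0},V\right),
\]
so the curvature cancels \emph{exactly} against one piece of $|\nabla V|^2$. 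What survives is $|V_2|^2+2g(V_1,V_2)$, and it is the cross term $g(V_1,V_2)$---which reduces to $(a_3\partial_{x_0}a_1-a_1\partial_{x_0}a_3)\cdot\tfrac12(g_{01,3}-g_{03,1})$---that carries the factor $O(|w_1|^2)$ and is controlled by Poincar\'e against $|V_2|^2+|V_4|^2$.

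This matters for your ``expected main obstacle''. The dangerous fields are the constant-coefficient normal sections $V=a_1\partial/\partial x_1+a_3\partial/\partial x_3$. These are \emph{not} parallel away from $p_0,p_1$ (indeed $\nabla V=V_1\neq 0$), and the curvature on them does \emph{not} vanish; rather $|\nabla V|^2$ and the curvature are both of order $|w_1|^4$ and exactly equal, so $I(V,V)=0$. A naive spectral-gap-versus-curvature argument fails here because both sides are the same order; the exact identity is what saves you. Once you have it, the remaining analysis (integration by parts to kill $\partial_{x_0}(a_1a_3)$, then Poincar\'e on $\partial a_i/\partial x_0$ using $\int_{\T^2_p}\partial a_i/\partial x_0=0$) is straightforward and is what the paper does.
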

\begin{proof}
First, we show that $f_p$ is harmonic for any fixed $p$. It suffices to verify the harmonic map equation locally. In the complex coordinate chart $(U_{00};w_0,w_1)$, the inclusion map may be written as $f_p(w_0)=(w_0,w_1(p))$. Now $w_0$ is a complex coordinate on $\T^2_p$ for the conformal structure $c_p$, the harmonic map equation \eqref{harmonic map} is
\begin{equation*}
\frac{\partial^2 f^i_p}{\partial w_0\partial\overline{w}_0}+\Gamma^i_{jk}\frac{\partial f^j_p}{\partial w_0}\frac{\partial f^k_p}{\partial \overline{w}_0}=0
\end{equation*}
where the indices $0\leq i,j,k\leq 3$ arise from $w_0=x_0+\ii x_2, w_1=x_1+\ii x_3$. Since $f_p$ is holomorphic, the functions $f^i_p$ are harmonic for any metric in the conformal class of $c_p$, hence the first term above vanishes. To see the second term also vanishes, since $f_p^1,f^3_p$ are constant it suffices to show $\Gamma^i_{00}, \Gamma^i_{02}, \Gamma^i_{22} = 0$.
Therefore, by \eqref{christoeffl}, $f_p$ is harmonic.	
	
Next, we examine the stability of these harmonic tori. Let ${\mathbf E}_p=f^{*}_pT(\S^{3}\times \S^{1})$ be the pullback bundle over $\T^2_p$. Let $\nabla$ denote the pullback Riemannian connection of $g_\epsilon$. For convenience, we denote $g_{\epsilon}$ by $g$.
The second derivative of energy at a critical point $f_p$ along a variation field $V$ is given (cf. \cite[(2.1)]{MM}, \cite{Sc}) by the index form
\begin{equation*}
I(V, V)=\int_{\T_p^{2}}\left\{\big|\nabla_{\frac{\partial }{\partial x_{0}}}V\big|^{2}+\big|\nabla_{\frac{\partial }{\partial x_{2}}}V\big|^{2}-R\left(\frac{\partial f_{P}}{\partial x_{0}}, V,\frac{\partial f_{p}}{\partial x_{0}}, V\right)-R\left(\frac{\partial f_{p}}{\partial x_{2}}, V,\frac{\partial f_{p}}{\partial x_{2}}, V\right)\right\}dx_{0}dx_{2}
\end{equation*}
where the Riemannian metric on $\T^2_p$ is compatible with the conformal structure $c_p$. Here we regard $\frac{\partial f_{p}}{\partial x_{0}}$ as a section of $T(\S^{3}\times \S^{1})$ defined by  $\frac{\partial f_{p}}{\partial x_{0}}(p)=(f_{p})_{*}(\frac{\partial }{\partial x_{0}}|_{p})$.

Since the tangent bundle of $\T^{2}$ is trivial, $\frac{\partial}{\partial x_{0}}, \frac{\partial }{\partial x_{2}}$ are global vector fields on $\T^{2}.$ In addition, when $p$ is in a neighbourhood of $p_{0}$, $f_{p}(\T^{2})\subset V_{00}$. Note that the tangent bundle of  $ V_{00}$ is trivial, we have $\mathbf E_p$ is a trivial vector bundle on $\T^{2}$ and $\frac{\partial }{\partial x_{i}}, i=0,1,2,3$ are global sections of $\mathbf E_p$.

For any smooth section $V$ of $\mathbf E_p$, we may write $V=\sum_{i=0}^{3}a_{i}\frac{\partial}{\partial x_{i}}$ for some functions $a_i\in C^\infty(\T^{2})$.   Associated to $V$, let
$$
V_{1}=\displaystyle\sum_{i}a_{i}\nabla_{\frac{\partial}{\partial x_{0}}}\frac{\partial}{\partial x_{i}}, \ \  \  \  \
V_{2}=\displaystyle\sum_{i}\frac{\partial a_{i}}{\partial x_{0}}\frac{\partial}{\partial x_{i}},
$$
$$
V_{3}=\displaystyle\sum_{i}a_{i}\nabla_{\frac{\partial}{\partial x_{2}}}\frac{\partial}{\partial x_{i}}, \ \  \  \  \
V_{4}=\displaystyle\sum_{i}\frac{\partial a_{i}}{\partial x_{2}}\frac{\partial}{\partial x_{i}}.
$$
Then we have
\begin{equation}\label{cd}
\nabla_{\frac{\partial }{\partial x_{0}}}V=V_{1}+V_{2}, \ \ \  \ \ \nabla_{\frac{\partial }{\partial x_{2}}}V=V_{3}+V_{4}.
\end{equation}
As $\frac{\partial f_p}{\partial x_0}=\frac{\partial}{\partial x_0}$ for the inclusion $f_p$, by expansion,
\begin{equation}\label{curvature 1 in SV}
R\left(\frac{\partial f_p}{\partial x_{0}}, V, \frac{\partial f_p}{\partial x_{0}}, V\right)=\sum_{i,j=0}^3a_{i}a_{j}\,R_{0i0j}.
\end{equation}
	We will show that for $i=0,2$, $j=0,\cdots, 3,$
\begin{equation}\label{curvature of torus}
R_{0i0j}=0, \ \ 	R_{0i2j}=0, \ \ 	R_{2i0j}=0,  \ \	R_{2i2j}=0.
\end{equation}
By \eqref{christoeffl}, we have
\begin{equation*} \begin{split}
R_{0i0j}&=-\frac{1}{2}\left(g_{00,ij}+g_{ij,00}-g_{i0,j0}-g_{j0,0i}\right)-\Gamma^{p}_{00}\Gamma^{q}_{ij}g_{pq}+\Gamma^{p}_{i0}\Gamma^{q}_{j0}g_{pq}\\&=\Gamma^{p}_{i0}\Gamma^{q}_{j0}g_{pq}=0.\\
\end{split}
\end{equation*}
Similarly, $R_{0i2j}=R_{2i0j}=R_{2i2j}=0$. It then follows from \eqref{section curvature} and \eqref{curvature 1 in SV} that
\begin{equation}\label{curvature in SV}
\begin{split}
R\left(\frac{\partial f_{p}}{\partial x_{0}}, V, \frac{\partial f_{p}}{\partial x_{0}}, V\right)=&a_{1}^{2}R_{0101}+a_{3}^{2}R_{0303}+2a_{1}a_{3}R_{0103}\\
=&a_{1}^{2}\Gamma_{01}^{p}\Gamma_{01}^{q}g_{pq}+a_{3}^{2}\Gamma_{03}^{p}\Gamma_{03}^{q}g_{pq}+2a_1a_3\Gamma^p_{01}\Gamma^q_{03}g_{pq}.
\end{split}
\end{equation}

On the other hand, by \eqref{christoeffl}, we have
\begin{equation*}
V_{1}=a_{i}\nabla_{\frac{\partial}{\partial x_{0}}}\frac{\partial}{\partial x_{i}}=\left(a_{1}\Gamma_{01}^{p}+a_{3}\Gamma_{03}^{p}\right)\frac{\partial}{\partial x_{p}}.
\end{equation*}
By \eqref{curvature in SV}, we obtain
 \begin{equation}\label{S2}
\big|V_{1}\big|^{2}_{g}=a_{1}^{2}\Gamma_{01}^{p}\Gamma_{01}^{q}g_{pq}+a_{3}^{2}\Gamma_{03}^{p}\Gamma_{03}^{q}g_{pq}+2a_{1}a_{3}\Gamma_{01}^{p}\Gamma_{03}^{q}g_{pq}=R\left(\frac{\partial f_{p}}{\partial x_{0}}, V, \frac{\partial f_{p}}{\partial x_{0}}, V\right).
\end{equation}
By \eqref{cd}, we have
\begin{equation}\label{S1}
\big|\nabla_{\frac{\partial }{\partial x_{0}}}V\big|^{2}_{g}=\big|V_{1}|^{2}_{g}+|V_{2}\big|^{2}_{g}+2g(V_{1}, V_{2}).
\end{equation}

We handle the third term above as
\begin{equation*}
\begin{split}
g(V_{1}, V_{2})=&\sum_{i=0}^{3}g\left(\frac{\partial a_{i}}{\partial x_{0}}\frac{\partial }{\partial x_{i}},a_{1}\nabla_{\frac{\partial }{\partial x_{0}}}
\frac{\partial}{\partial x_{1}}+a_{3}\nabla_{\frac{\partial }{\partial x_{0}}}
\frac{\partial}{\partial x_{3}} \right)\\
=&\sum_{i=0,2 ,j=1,3}a_j\frac{\partial a_{i}}{\partial x_{0}}\,g\left(\frac{\partial }{\partial x_{i}},\nabla_{\frac{\partial }{\partial x_{0}}}
\frac{\partial}{\partial x_{j}}\right)+\sum_{ i, j=1,3}a_j\frac{\partial a_{i}}{\partial x_{0}}\,g\left(\frac{\partial }{\partial x_{i}},\nabla_{\frac{\partial }{\partial x_{0}}}
\frac{\partial}{\partial x_{j}}\right).\\
\end{split}
\end{equation*}
By $\frac{\partial }{\partial x_{0}}g_{ij}=0$ and \eqref{christoeffl},
\begin{equation*}
\begin{split}
\sum_{i=0,2;j=1,3}a_j\frac{\partial a_{i}}{\partial x_{0}}\,g\left(\frac{\partial }{\partial x_{i}},\nabla_{\frac{\partial }{\partial x_{0}}}
\frac{\partial}{\partial x_{j}}\right)=&\sum_{i=0,2;j=1,3}a_j\frac{\partial a_{i}}{\partial x_{0}}\frac{\partial }{\partial x_{0}}g\left(\frac{\partial }{\partial x_{i}},
\frac{\partial}{\partial x_{j}}\right)\\
&-\sum_{i=0,2; j=1,3}a_j\frac{\partial a_{i}}{\partial x_{0}}\,g\left(\nabla_{\frac{\partial }{\partial x_{0}}}\frac{\partial }{\partial x_{i}},\frac{\partial}{\partial x_{j}}\right)=0
\end{split}
\end{equation*}
and
$$
g\left(\frac{\partial }{\partial x_{i}},\nabla_{\frac{\partial }{\partial x_{0}}}
\frac{\partial}{\partial x_{j}}\right)+g\left(\nabla_{\frac{\partial}{\partial x_0}}\frac{\partial }{\partial x_{i}},
\frac{\partial}{\partial x_{j}}\right)=\frac{\partial }{\partial x_{0}}g\left(\frac{\partial }{\partial x_{i}},
\frac{\partial}{\partial x_{j}}\right)=0
$$
in particular
$$
g\left(\frac{\partial }{\partial x_{i}},\nabla_{\frac{\partial }{\partial x_{0}}}
\frac{\partial}{\partial x_{i}}\right)=\frac{1}{2}\frac{\partial }{\partial x_{0}}g\left(\frac{\partial }{\partial x_{i}},
\frac{\partial}{\partial x_{i}}\right)=0, \ \ \ \ i=1,3.
$$
Therefore,
\begin{equation}\label{S3}
\begin{split}
g(V_{1}, V_{2})=&a_3\frac{\partial a_{1}}{\partial x_{0}}\,g\left(\frac{\partial }{\partial x_{1}},\nabla_{\frac{\partial }{\partial x_{0}}}
\frac{\partial}{\partial x_{3}}\right)+a_1\frac{\partial a_{3}}{\partial x_{0}} \,g\left(\frac{\partial }{\partial x_{3}},\nabla_{\frac{\partial }{\partial x_{0}}}
\frac{\partial}{\partial x_{1}}\right)\\
=&\left(a_3\frac{\partial a_{1}}{\partial x_{0}}-a_1\frac{\partial a_{3}}{\partial x_{0}}\right)\,g\left(\nabla_{\frac{\partial }{\partial x_{0}}}\frac{\partial }{\partial x_{3}},\frac{\partial}{\partial x_{1}}\right).\\
\end{split}
\end{equation}
In conclusion, by \eqref{S2}, \eqref{S1} and \eqref{S3}, we have
\begin{equation}\label{SV2}
\big|\nabla_{\frac{\partial }{\partial x_{0}}}V\big|^{2}_{g}-R\left(\frac{\partial f_{p}}{\partial x_{0}}, V, \frac{\partial f_{p}}{\partial x_{0}}, V\right)=\left|V_{2}\right|_{g}^{2}+2\left(a_3\frac{\partial a_{1}}{\partial x_{0}}-a_1\frac{\partial a_{3}}{\partial x_{0}}\right)g\left(\nabla_{\frac{\partial }{\partial x_{0}}}\frac{\partial }{\partial x_{3}}, \frac{\partial}{\partial x_{1}}\right).
\end{equation}
By \eqref{derivative of metric}, we get
\begin{equation}\label{stability 1}
g\left(\frac{\partial }{\partial x_{1}},\nabla_{\frac{\partial }{\partial x_{0}}}\frac{\partial}{\partial x_{3}}\right)=\Gamma_{03}^{p}g_{p1}=\frac{1}{2}(g_{01,3}-g_{03,1})=(1+\epsilon^{2})\frac{\pi}{4A^{2}}\left(x_{3}^{2}-x_{1}^{2}\right).
\end{equation}

Therefore, we have
\begin{equation}\label{stable 1}
\begin{split}
&\int_{\T_{p}^{2}}\left\{\big|\nabla_{\frac{\partial }{\partial x_{0}}}V\big|^{2}_{g}-R\left(\frac{\partial f_{p}}{\partial x_{0}}, V, \frac{\partial f_{p}}{\partial x_{0}}, V\right)\right\}\,dx_{0}dx_{2} \\
=&\int_{\T_{p}^2}\left\{ \big|V_2\big|^2_g+\left(4a_3\frac{\partial a_1}{\partial x_0} -2\frac{\partial (a_1a_3)}{\partial x_0}\right)
g\left(\frac{\partial }{\partial x_{1}},\nabla_{\frac{\partial }{\partial x_{0}}}\frac{\partial}{\partial x_{3}}\right)\right\}dx_{0}dx_{2}\\
=&\int_{\T_{p}^2}\left\{ \big|V_2\big|^2_g+4a_3\frac{\partial a_1}{\partial x_0}g\left(\frac{\partial }{\partial x_{1}},\nabla_{\frac{\partial }{\partial x_{0}}}\frac{\partial}{\partial x_{3}}\right) -2(1+\epsilon^{2})\frac{\partial}{\partial x_0} \left(a_1a_3\frac{\pi}{4A^2}(x^2_3-x^2_1)\right)
\right\}dx_{0}dx_{2}\\
=&\int_{\T_{p}^{2}}\left|V_{2}\right|_{g}^{2}+4a_3\frac{\partial a_{1} }{\partial x_{0}}g\left(\frac{\partial }{\partial x_{1}},\nabla_{\frac{\partial }{\partial x_{0}}}\frac{\partial}{\partial x_{3}}\right)\,dx_{0}dx_{2}
\end{split}
\end{equation}
where we have used \eqref{stability 1} and integration by parts.

There is a uniform positive constant $C$ arising from the largest eigenvalue of the symmetric matrix $(g_{ij})$ for $|x_{1}|,|x_{3}|\leq 1$ such that 
\[
\int_{\T_{p}^{2}}\big|V_{2}\big|_{g}^{2}dx_{0}dx_{2}+\int_{\T_{p}^{2}}\big|V_{4}\big|_{g}^{2}dx_{0}dx_{2}\geq \frac{1}{C}\int_{\T_{p}^{2}}\left(\left|\frac{\partial a_{1} }{\partial x_{0}}\right|^{2}+\left|\frac{\partial a_{3} }{\partial x_{0}}\right|^{2}+\left|\frac{\partial a_{1} }{\partial x_{2}}\right|^{2}+\left|\frac{\partial a_{3} }{\partial x_{2}}\right|^{2}\right)dx_{0}dx_{2}.
\]

Note that $$\int_{\T_{p}^{2}}\frac{\partial a_{1} }{\partial x_{0}}dx_0dx_2=0.$$
By Poincar\'e's inequality and \eqref{stability 1}, if $|x_{1}|, |x_{3}|$ are small enough, we have
\begin{equation}\label{SV 2}
\begin{split}
4\left|\int_{\T_{p}^{2}}\right.&\left. a_3\frac{\partial a_{1} }{\partial x_{0}} g\left(\frac{\partial }{\partial x_{1}},\nabla_{\frac{\partial }{\partial x_{0}}}\frac{\partial}{\partial x_{3}}\right)dx_{0}dx_{2}\right|\\
=&4\left|g\left(\frac{\partial }{\partial x_{1}},\nabla_{\frac{\partial }{\partial x_{0}}}\frac{\partial}{\partial x_{3}}\right)\int_{\T_{p}^{2}}\frac{\partial a_{1} }{\partial x_{0}}\left(a_{3}-\frac{1}{\mbox{Vol}(\T_{P}^{2})}\int_{\T_{p}^{2}}a_{3}\right)dx_{0}dx_{2}\right|\\
\leq& 2\left|g\left(\frac{\partial }{\partial x_{1}},\nabla_{\frac{\partial }{\partial x_{0}}}\frac{\partial}{\partial x_{3}}\right)\right|\left(\int_{\T_{p}^{2}}\left|\frac{\partial a_{1} }{\partial x_{0}}\right|^{2}dx_{0}dx_{2} +\int_{\T_{p}^{2}}\left|a_{3}-\frac{1}{\mbox{Vol}(\T_{P}^{2})}\int_{\T_{p}^{2}}a_{3} \right|^{2}dx_{0}dx_{2}\right)\\
\leq& C_{1}\left|\frac{\pi}{4A^{2}}\left(x_{3}^{2}-x_{1}^{2}\right)\right|\left(\int_{\T_{P}^{2}}\left|\frac{\partial a_{1} }{\partial x_{0}}\right|^{2}dx_{0}dx_{2} +\int_{\T_{p}^{2}}\left\{\left|\frac{\partial a_{3} }{\partial x_{0}}\right|^{2}+\left|\frac{\partial a_{3} }{\partial x_{2}}\right|^{2}\right\}dx_{0}dx_{2}\right)\\
\leq& \frac{1}{2}\left(\int_{\T_{p}^{2}}\big|V_{2}\big|_{g}^{2}dx_{0}dx_{2}+\int_{\T_{p}^{2}}\big|V_{4}\big|_{g}^{2}dx_{0}dx_{2}\right),
\end{split}
\end{equation}
where $C_{1}$ is  a uniform constant.

\vspace{.1cm}

Similar to \eqref{stable 1}, we have
\begin{equation}\label{stable 2}
\begin{split}
\int_{\T_{p}^{2}}&\left\{\big|\nabla_{\frac{\partial }{\partial x_{2}}}V\big|^{2}_{g}-R\left(\frac{\partial f_{p}}{\partial x_{2}}, V, \frac{\partial f_{p}}{\partial x_{2}}, V\right)\right\}dx_{0}dx_{2}\\
&=\int_{\T_{p}^{2}}\left\{\big|V_{4}\big|_{g}^{2}+4a_3\frac{\partial a_{1} }{\partial x_{2}}g\left(\frac{\partial }{\partial x_{1}},\nabla_{\frac{\partial }{\partial x_{2}}}\frac{\partial}{\partial x_{3}}\right)\right\}dx_{0}dx_{2}.
\end{split}
\end{equation}
By \eqref{derivative of metric}, we see that
\[
g\left(\frac{\partial }{\partial x_{1}},\nabla_{\frac{\partial }{\partial x_{2}}}\frac{\partial}{\partial x_{3}}\right)=g_{1p}\Gamma_{23}^{p}=\frac{1}{2}\left(g_{21,3}-g_{32,1}\right)=-\left(1+\epsilon^{2}\right)\frac{\pi}{ 2A^2}x_{1}x_{3}.
\]
When $|x_{1}|, |x_{3}|$ is small enough, as argued above, we have
\begin{equation*}
4\left|\int_{\T_{p}^{2}}a_3\frac{\partial a_{1} }{\partial x_{2}}g\left(\frac{\partial }{\partial x_{1}},\nabla_{\frac{\partial }{\partial x_{2}}}\frac{\partial}{\partial x_{3}}\right)dx_{0}dx_{2}\right|
\leq \frac{1}{2}\left(\int_{\T_{p}^{2}}\big|V_{2}\big|_{g}^{2}dx_{0}dx_{2}+\int_{\T_{p}^{2}}\big|V_{4}\big|_{g}^{2}dx_{0}dx_{2}\right).
\end{equation*}

\vspace{.1cm}
Combining with \eqref{stable 1}, \eqref{SV 2} and \eqref{stable 2},  we conclude $I(V, V)\geq 0$. Hence $f_p$ is a stable harmonic map when $p$ is in some neighbourhood of $p_0$ or $p_1$.
\end{proof}

\begin{theo}\label{unstable p}
The harmonic map $f_{p}$ is unstable, when $x_{1}=0$ and $|x_{3}|>\hspace{-.15cm}\sqrt{2}$.
\end{theo}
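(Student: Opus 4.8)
The plan is to exhibit a single variation field $V$ along $f_p$ with negative index $I(V,V)$; that $f_p$ is harmonic is already part of Theorem~\ref{harmonic torus}. I would first specialise the index form. On $\T^2_p$ with $x_1=0$ we have $w_1=\i x_3$ with $x_3$ (hence $A=1+x_3^2$) constant, and by \eqref{metric real} the mixed components $g_{02},g_{03},g_{12},g_{13}$ all vanish while $g_{00}=g_{22}=\tfrac12(1+\epsilon^2)\pi^2$, $g_{11}=g_{33}=\tfrac{4+(1+\epsilon^2)x_3^2}{8A^2}$ and $g_{01}=g_{23}=-\tfrac{(1+\epsilon^2)\pi x_3}{4A}$. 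In particular $g\left(\tfrac{\partial}{\partial x_1},\nabla_{\partial/\partial x_2}\tfrac{\partial}{\partial x_3}\right)=-(1+\epsilon^2)\tfrac{\pi}{2A^2}x_1x_3=0$, so the cross term in \eqref{stable 2} is absent, and combining \eqref{stable 1}, \eqref{stable 2} with \eqref{stability 1} gives, for $V=\sum_{i=0}^3 a_i\,\tfrac{\partial}{\partial x_i}$,
\begin{equation*}
I(V,V)=\int_{\T^2_p}\Big\{|V_2|^2_g+|V_4|^2_g+(1+\epsilon^2)\tfrac{\pi x_3^2}{A^2}\,a_3\,\tfrac{\partial a_1}{\partial x_0}\Big\}\,dx_0dx_2,
\end{equation*}
with $V_2=\sum_i\tfrac{\partial a_i}{\partial x_0}\tfrac{\partial}{\partial x_i}$ and $V_4=\sum_i\tfrac{\partial a_i}{\partial x_2}\tfrac{\partial}{\partial x_i}$ as in the proof of Theorem~\ref{harmonic torus}.

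Next I would take $e_1=\tfrac{x_3}{2\pi A}\tfrac{\partial}{\partial x_0}+\tfrac{\partial}{\partial x_1}$ and $e_2=\tfrac{x_3}{2\pi A}\tfrac{\partial}{\partial x_2}+\tfrac{\partial}{\partial x_3}$ (these are $\tfrac{\partial}{\partial x_1},\tfrac{\partial}{\partial x_3}$ made $g$-orthogonal to $\tfrac{\partial}{\partial x_0},\tfrac{\partial}{\partial x_2}$ respectively, since $-g_{01}/g_{00}=x_3/(2\pi A)$), and set
\begin{equation*}
V=\cos(2\pi x_0)\,e_1+\sin(2\pi x_0)\,e_2,
\end{equation*}
that is $a_1=\cos(2\pi x_0)$, $a_3=\sin(2\pi x_0)$, $a_0=\tfrac{x_3}{2\pi A}a_1$, $a_2=\tfrac{x_3}{2\pi A}a_3$; these are smooth global sections of $\mathbf E_p$ because $f_p(\T^2_p)\subset V_{00}$, where the $\tfrac{\partial}{\partial x_i}$ trivialise the pullback bundle. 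The Gram--Schmidt identity gives $|e_1|^2_g=|e_2|^2_g=g_{11}-g_{01}^2/g_{00}=\tfrac1{2A^2}$, and since the remaining mixed components vanish at $x_1=0$ also $g(e_1,e_2)=0$. Hence $V_2=-2\pi\sin(2\pi x_0)\,e_1+2\pi\cos(2\pi x_0)\,e_2$ has constant norm $|V_2|^2_g=\tfrac{2\pi^2}{A^2}$, $V_4=0$ (every $a_i$ depends only on $x_0$), and $\int_{\T^2_p}a_3\,\tfrac{\partial a_1}{\partial x_0}\,dx_0dx_2=-2\pi\int_0^1\sin^2(2\pi x_0)\,dx_0=-\pi$. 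Substituting,
\begin{equation*}
I(V,V)=\frac{2\pi^2}{A^2}-(1+\epsilon^2)\frac{\pi^2 x_3^2}{A^2}=\frac{\pi^2}{A^2}\big(2-(1+\epsilon^2)x_3^2\big),
\end{equation*}
which is strictly negative because $1+\epsilon^2\ge1$ and $x_3^2>2$; as $V\not\equiv0$, this shows $f_p$ is unstable.

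The only delicate point is the choice of $V$. The naive field $\cos(2\pi x_0)\tfrac{\partial}{\partial x_1}+\sin(2\pi x_0)\tfrac{\partial}{\partial x_3}$ gives only $I(V,V)=\tfrac{\pi^2}{2A^2}\big(4-(1+\epsilon^2)x_3^2\big)$, hence instability merely for $|x_3|>2/\sqrt{1+\epsilon^2}$ --- a threshold that equals $\sqrt2$ only at $\epsilon=1$ and grows to $2$ as $\epsilon\to0$, so it is not uniform over $\epsilon\in[0,1]$. Adjoining the $\tfrac{\partial}{\partial x_0},\tfrac{\partial}{\partial x_2}$ components with the exact coefficient $x_3/(2\pi A)$ uses the coupling $g_{01}=g_{23}$ to shrink $|V_2|^2_g$ from $4\pi^2 g_{11}$ down to $2\pi^2/A^2$, which is precisely what makes $|x_3|>\sqrt2$ suffice for every $\epsilon$; everything else is a routine computation. (The same $V$ is obtained by minimising the quadratic form above first over $a_0,a_2$ for fixed $a_1,a_3$ and then over the lowest Fourier mode in $x_0$.)
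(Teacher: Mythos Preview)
Your proof is correct and in fact sharper than the paper's. The paper takes the simpler variation $V=\tfrac1{2\pi}\cos(2\pi x_0)\,\tfrac{\partial}{\partial x_1}+\tfrac1{2\pi}\sin(2\pi x_0)\,\tfrac{\partial}{\partial x_3}$ and computes the pointwise integrand from \eqref{SV2}, obtaining $-\tfrac{1+\epsilon^2}{8A}+\tfrac{5+\epsilon^2}{8A^2}$ and then asserting this is $<-\tfrac1{4A}+\tfrac3{4A^2}$, which is negative once $A>3$, i.e.\ $|x_3|>\sqrt2$. Your last paragraph pinpoints the issue: that comparison step is equivalent to $(1-\epsilon^2)(A-1)<0$ and therefore fails for every $\epsilon<1$, so the paper's own test field only yields the $\epsilon$-dependent threshold $|x_3|>2/\sqrt{1+\epsilon^2}$ that you computed.

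Your Gram--Schmidt adjustment---adjoining $\tfrac{\partial}{\partial x_0},\tfrac{\partial}{\partial x_2}$ components with coefficient $x_3/(2\pi A)$---leaves the cross term $\int a_3\,\partial_{x_0}a_1$ untouched while shrinking $|V_2|^2_g$ from $4\pi^2 g_{11}=\tfrac{\pi^2(4+(1+\epsilon^2)x_3^2)}{2A^2}$ down to $\tfrac{2\pi^2}{A^2}$, and this is exactly what delivers $I(V,V)=\tfrac{\pi^2}{A^2}\big(2-(1+\epsilon^2)x_3^2\big)<0$ uniformly for $\epsilon\in[0,1]$ whenever $|x_3|>\sqrt2$. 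The formulas \eqref{stable 1}, \eqref{stable 2} you invoke were derived for arbitrary $V=\sum a_i\,\partial_{x_i}$ (the $a_0,a_2$ coefficients simply drop out of the cross term by \eqref{christoeffl}), so their use here is legitimate. In short, your choice of $V$ repairs a genuine gap in the paper's estimate and proves the stated uniform threshold.
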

\begin{proof}
	Choose $V=a_{1}\frac{\partial}{\partial x_{1}}+a_{3}\frac{\partial}{\partial x_{3}}$ with $a_{1}=\frac{1}{2\pi}\cos(2\pi x_{0}), a_{3}=\frac{1}{2\pi}\sin(2\pi x_{0}).$ Then, by \eqref{SV2}, \eqref{derivative of metric} and \eqref{metric real}, when $x_{1}=0$
	\begin{equation*}
	\begin{split}
	\big|\nabla_{\frac{\partial }{\partial x_{0}}}V\big|^{2}_{g}-&R\left(\frac{\partial f_{p}}{\partial x_{0}}, V, \frac{\partial f_{p}}{\partial x_{0}}, V\right)= \big|V_2\big|^2_g -\frac{1}{\pi} g_{1p}\Gamma^p_{03}\\
	=& \sin^2(2\pi x_0) \,g_{11}+\cos^2(2\pi x_0) \,g_{33}-\frac{1}{2\pi}\left(g_{01,3}-g_{03,1}\right)\ \ \ \ \ \ \ \\
	=&\frac{1}{2}\left(\frac{1}{4A}+\frac{3}{4A^{2}}+\frac{\epsilon^{2}|w_{1}|^{2}}{4A^{2}}\right)-\left(1+\epsilon^{2}\right)\frac{1}{4A^{2}}x_{3}^{2}\\
	=&\frac{1}{2}\left(\frac{1+\epsilon^{2}}{4A}+\frac{3}{4A^{2}}-\frac{\epsilon^{2}}{4A^{2}}\right)-\left(1+\epsilon^{2}\right)\frac{1}{4A}+\left(1+\epsilon^{2}\right)\frac{1}{4A^{2}}\\
	=&\left(1+\epsilon^{2}\right)\left(\frac{1}{8}-\frac{1}{4}\right)\frac{1}{A}+\left(\frac{3}{8}+\frac{1}{4}+\frac{\epsilon^{2}}{8}\right)\frac{1}{A^{2}}\\
	< &-\frac{1}{4A}+\frac{3}{4A^{2}}<0
	\end{split}
	\end{equation*}
when $|x_3|>\hspace{-.15cm}\sqrt{2}$. 

\vspace{.1cm}

Note $\frac{\partial a_{1}}{\partial x_{2}}=\frac{\partial a_{3}}{\partial x_{2}}=0$. Hence  $V_{4}=0$.
By \eqref{stable 2},
\begin{equation*}
\int_{\T_{p}^{2}}\left\{|\nabla_{\frac{\partial }{\partial x_{2}}}V|^{2}_{g}-R\left(\frac{\partial f_{p}}{\partial x_{2}}, V, \frac{\partial f_{p}}{\partial x_{2}}, V\right)\right\}dx_{0}dx_{2}=0.
\end{equation*}
It follows $I(W, W)<0.$
\end{proof}

Let ${\mathbf E}^{\mathbb{C}}_p=f^{*}_pT(\S^{3}\times \S^{1})\otimes \mathbb{C}$ be the complexified pullback bundle over $\T^2_p$.
The pullback metric $f^*_pg_\epsilon$ makes ${\mathbf E}^{\mathbb{C}}_p$ a Hermitian bundle. Let $\nabla$ denote the pullback Riemannian connection of $g_\epsilon$ extended to a complex connection on ${\mathbf E}^{\mathbb{C}}_p$, which decomposes into $\nabla=\nabla'+\nabla''$ where
$$
\nabla':\mathcal A^{0,0}({\mathbf E}^{\mathbb{C}}_p)\to\mathcal A^{1,0}({\mathbf E}^{\mathbb{C}}_p),\,\,\,\,\,\,\,\nabla'':\mathcal A^{0,0}({\mathbf E}^{\mathbb{C}}_p)\to\mathcal A^{0,1}({\mathbf E}^{\mathbb{C}}_p)
$$
and $\mathcal A^{r,s}({\mathbf E}^{\mathbb{C}}_p)$ is the space of ${\mathbf E}^{\mathbb{C}}_p$-valued $(r,s)$-forms on $\T^2_p$.
The curvature 2-form is of type $(1,1)$ as the base $\T^2_p$ is of complex dimension 1. 
It is well known from \cite{KM}, \cite[Theorem 5.1]{AHS} and \cite[Proposition 1.3.7]{Ko} that there is a unique holomorphic structure $\bar{\partial}$ on ${\mathbf E}^{\mathbb{C}}_p$ so that $\nabla''=\bar{\partial}$ and with respect to which a section $W$ of $\mathbf E_p^\C$ is holomorphic if and only if
$$\nabla_{\frac{\partial}{\partial \overline w_0}} W =0.
$$
The discussion above is contained in \cite{MM}. 

\vspace{.1cm}

Let $H_{\overline{\partial}}^{0}(\T_p^{2}, \mathbf{E}_{p}^{\mathbb{C}})$ be the linear space of holomorphic sections of ${\mathbf E}^{\mathbb C}_p$.

\begin{theo}
When $p\neq p_{0},p_{1}$ is in $U_0\cup U_1$ where the neighbourhoods $U_{0}$, $U_1$ of $p_0,p_1$ are defined as in Theorem \ref{harmonic torus}, $H_{\overline{\partial}}^{0}(\T_p^{2}, \mathbf{E}_{p}^{\mathbb{C}})=\text{Span}_\C \left\{\frac{\partial}{\partial x_{0}}, \frac{\partial}{\partial x_{2}}\right\}$.
At $p_{0}, p_{1}$, we have $H_{\overline{\partial}}^{0}(\T_p^{2}, \mathbf{E}_p^{\mathbb{C}})=\text{Span}_\C \left\{\frac{\partial}{\partial x_{0}}, \frac{\partial}{\partial x_{2}}, \frac{\partial}{\partial x_{1}}, \frac{\partial}{\partial x_{3}}\right\}.$
\end{theo}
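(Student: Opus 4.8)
The plan is to prove each of the two equalities by establishing both inclusions; the nontrivial one will use the stability of $f_p$ (Theorem~\ref{harmonic torus}) together with the curvature computation of Lemma~\ref{curvature} and Proposition~\ref{isotropy curvature}. In the global frame $\{\partial/\partial x_i\}_{i=0}^{3}$ of $\mathbf E_p$ described above, write a section as $W=\sum_{i=0}^{3}a_i\,\partial/\partial x_i$ with $a_i\in C^\infty(\T^2_p)$. By the characterisation of $\bar\partial$ recalled before the theorem, $W$ is holomorphic iff $\nabla_{\partial/\partial\overline w_0}W=0$; since $\partial/\partial\overline w_0=\tfrac12(\partial/\partial x_0+\ii\,\partial/\partial x_2)$ and $\Gamma^k_{0j}=\Gamma^k_{2j}=0$ for $j\in\{0,2\}$ by \eqref{christoeffl}, this is the linear system
$$\frac{\partial a_k}{\partial\overline w_0}+a_1\,B^k_1+a_3\,B^k_3=0,\qquad k=0,1,2,3,$$
whose coefficients $B^k_j=\tfrac12(\Gamma^k_{0j}+\ii\,\Gamma^k_{2j})$ are \emph{constant} on $\T^2_p$ (they depend only on $x_1,x_3$). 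In particular $\partial/\partial x_0$ and $\partial/\partial x_2$ solve it, so $\text{Span}_{\mathbb C}\{\partial/\partial x_0,\partial/\partial x_2\}\subseteq H^0_{\bar\partial}(\T^2_p,\mathbf E^{\mathbb C}_p)$ for every $p$. At $p_0$ and $p_1$ one has $x_1=x_3=0$, and since each $B^k_j$ $(j=1,3)$ is a multiple of $x_1^2-x_3^2$ or $x_1x_3$ (by \eqref{derivative of metric}), all $B^k_j$ vanish there; holomorphicity then reads $\partial a_k/\partial\overline w_0=0$ for all $k$, forcing the $a_k$ constant, so $H^0_{\bar\partial}=\text{Span}_{\mathbb C}\{\partial/\partial x_0,\partial/\partial x_1,\partial/\partial x_2,\partial/\partial x_3\}$, which settles the case $p=p_0,p_1$.

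It remains to show $H^0_{\bar\partial}\subseteq\text{Span}_{\mathbb C}\{\partial/\partial x_0,\partial/\partial x_2\}$ when $p\in(U_0\cup U_1)\setminus\{p_0,p_1\}$, and for this I would use the complexified second variation. If $W$ is holomorphic, then $\nabla_{\partial/\partial\overline w_0}W=0$ kills the $\nabla_{\partial/\partial\overline w_0}$-term in the complexified index form, and an integration by parts on $\T^2_p$ together with the Ricci identity and the symmetries of $R$ converts the surviving gradient term into curvature, giving
$$I_{\mathbb C}(W,\overline W)=-c\int_{\T^2_p}R\!\left(W,\frac{\partial f_p}{\partial w_0},\overline W,\overline{\frac{\partial f_p}{\partial w_0}}\right)dx_0\,dx_2$$
for a positive constant $c$ (this is the complexified formula from \cite{MM}). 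The integrand is $\ge 0$ by the curvature property $R(W,\partial f_p/\partial w_0,\overline W,\overline{\partial f_p/\partial w_0})\ge 0$ stated before the theorem (itself a byproduct of the computation below), so $I_{\mathbb C}(W,\overline W)\le 0$. On the other hand, writing $W=U+\ii\,V$ with $U,V$ real sections, $I_{\mathbb C}(W,\overline W)=I(U,U)+I(V,V)\ge 0$ because $f_p$ is a stable harmonic map. Hence $I_{\mathbb C}(W,\overline W)=0$, and since the integrand is a nonnegative function, $R(W,\partial f_p/\partial w_0,\overline W,\overline{\partial f_p/\partial w_0})\equiv 0$ on $\T^2_p$.

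The last step is the pointwise analysis of this identity. With $\partial f_p/\partial w_0=\tfrac12(\partial/\partial x_0-\ii\,\partial/\partial x_2)$, expanding the curvature term and using \eqref{christoeffl}, Lemma~\ref{curvature}, and the further vanishings that follow from the first Bianchi identity and the symmetries of $R$, one finds that $R(W,\partial f_p/\partial w_0,\overline W,\overline{\partial f_p/\partial w_0})$, as a Hermitian form in $W=\sum a_i\,\partial/\partial x_i$, depends only on $a_1,a_3$ and equals a positive multiple of $(x_1^2+x_3^2)^2(|a_1|^2+|a_3|^2)$. Since $x_1^2+x_3^2>0$ away from $\T^2_{p_0}\cup\T^2_{p_1}$, the identity forces $a_1\equiv a_3\equiv 0$ on $\T^2_p$; then $\nabla_{\partial/\partial\overline w_0}W=0$ reduces to $\partial a_0/\partial\overline w_0=\partial a_2/\partial\overline w_0=0$, so $a_0,a_2$ are holomorphic on the square torus, hence constant, and $W\in\text{Span}_{\mathbb C}\{\partial/\partial x_0,\partial/\partial x_2\}$, as required. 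The main obstacle I foresee is the bookkeeping in this last step — tracking which curvature components vanish and checking that the form is definite off the two special tori — rather than any conceptual difficulty. A self-contained alternative avoiding stability is to solve the constant-coefficient system directly on $\T^2_p=\mathbb C/(\mathbb Z+\ii\,\mathbb Z)$: the substitution $b_\pm=a_1\pm\ii\,a_3$ decouples the $k=1,3$ equations into $\partial b_\pm/\partial\overline w_0=\mp\ii\,\kappa\,w_1(p)^2\,b_\pm$ with $\kappa=\tfrac{(1+\epsilon^2)\pi}{4}$, and writing $b_\pm=e^{\mp\ii\kappa w_1^2\overline w_0}h(w_0)$ with $h$ entire and imposing double periodicity forces, by a Liouville-type argument, $b_\pm\equiv 0$ unless $\ii\,\kappa\,w_1^2\in\pi\mathbb Z$ — which on $U_0\cup U_1$, where $|w_1|^2$ is small, happens only at $p_0,p_1$.
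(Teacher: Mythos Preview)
Your main argument is exactly the paper's: use the complexified index form from \cite{MM}, stability of $f_p$ from Theorem~\ref{harmonic torus}, and the explicit curvature identity (which the paper records as \eqref{group 1}, giving a positive multiple of $(x_1^2+x_3^2)^2(|a_1|^2+|a_3|^2)$) to force $R(W,\partial f_p/\partial w_0,\overline W,\overline{\partial f_p/\partial w_0})\equiv 0$ and hence $a_1=a_3=0$; the treatment of $p_0,p_1$ via the vanishing of $\Gamma^k_{0j},\Gamma^k_{2j}$ there also matches the paper line for line.

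The self-contained alternative you sketch at the end is not in the paper and is a genuinely different route. Your decoupling is correct: from \eqref{derivative of metric} and \eqref{real inverse metric} the restricted matrix $(B^k_j)_{k,j\in\{1,3\}}$ has zero diagonal and off-diagonal entries $\mp\kappa w_1^2$, so $b_\pm=a_1\pm\ii a_3$ indeed satisfy $\partial_{\overline w_0}b_\pm=\mp\ii\kappa w_1^2\,b_\pm$. Writing $b_\pm=e^{\mp\ii\kappa w_1^2\overline w_0}h(w_0)$ and expanding $h$ in the Fourier basis $e^{2\pi i(m x_0+n x_2)}$, double periodicity forces the quantisation $\kappa w_1^2\in\pi(\Z+\ii\Z)$ rather than merely $\ii\kappa w_1^2\in\pi\Z$; either way, for $|w_1|$ small this gives only $w_1=0$. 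This alternative is more elementary --- it bypasses stability and even yields an explicit radius for $U_0,U_1$ --- but is tied to the particular coordinate frame and the constancy of the connection coefficients along the fibre, whereas the stability argument the paper (and you) give illustrates the general mechanism linking the index form to holomorphic sections of $\mathbf E_p^{\C}$.
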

\begin{proof} Assume that $p$ is in the neighbourhood $U_0$ of $p_0$. So $f_p(\T^2_p)\subset V_{00}$. As explained in the proof of Theorem \ref{harmonic torus}, $\frac{\partial}{\partial x_0}, \cdots, \frac{\partial}{\partial x_3}$ are global sections of ${\mathbf E}_p$ hence they are also global sections of $\mathbf E_p^\C$. 
First, we show $R(\frac{\partial f_{P}}{\partial w_{0}}, W, \overline{\frac{\partial f_{p}}{\partial {w}_{0}}}, \overline{W})\geq 0$  for any smooth section $W\in\Gamma({\mathbf E}^{\mathbb C}_p)$. Assume $W=\sum^3_{i=0}a_{i}\frac{\partial }{\partial x_{i}}$ where $a_i\in C^\infty(\T_{p}^{2},\mathbb C)$. By Lemma \ref{curvature}, \eqref{curvature of torus} and \eqref{derivative of metric},
\begin{equation}\label{group 1}
\begin{split}
R\left(\frac{\partial f_{p}}{\partial w_{0}}, W, \overline{\frac{\partial f_{p}}{\partial {w}_{0}}}, \overline{W}\right)=&\,|a_{1}|^{2}R_{0101}+|a_{3}|^{2}R_{0303}+\i\left(|a_{1}|^{2}R_{0121}+|a_{3}|^{2}R_{0323}\right)\\
&-\i\left(|a_{1}|^{2}R_{2101}+|a_{3}|^{2}R_{2303}\right)+|a_{1}|^{2}R_{2121}+|a_{3}|^{2}R_{2323}\\
=&\,|a_{1}|^{2}R_{0101}+|a_{3}|^{2}R_{0303}+|a_{1}|^{2}R_{2121}+|a_{3}|^{2}R_{2323}\\
=&\,\frac{1}{4}(|a_{1}|^{2}+|a_{3}|^{2})\left(g^{33}(g_{03,1}-g_{01,3})^{2}+g^{11}(g_{21,3}-g_{32,1})^{2}\right)\\
=&\,\frac{1}{4}(1+\epsilon^{2})^{2}(|a_{1}|^{2}+|a_{3}|^{2})g^{33}\left(\left(\frac{\pi}{2A^{2}}(x_{1}^{2}-x_{3}^{2})\right)^{2}+\left(\frac{\pi}{A^{2}}x_{1}x_{3}\right)^{2}\right)\geq 0.
\end{split}
\end{equation}

The second derivative of energy at a critical point $f_p$ along $W$ is given in \cite[(2.3)]{MM} by the index form
\begin{equation*}
I(W, W)= 2\ii\int_{\T_{p}^{2}}\left\{\left|\nabla_{\frac{\partial }{\partial \overline{w}_0}}W\right|^{2}- R\left(W,\frac{\partial f_p}{\partial w_0}, \overline{W}, \overline{\frac{\partial f_{p} }{\partial w_0}}\right)\right\} d w_0\wedge d\overline{w}_0.
\end{equation*}
Assume $W$ is holomorphic. Since $f_{p}$ is stable by Theorem \ref{harmonic torus},
\begin{equation*}
I(W, W)= 2\ii\int_{\T_{p}^{2}}\left\{- R\left(W,\frac{\partial f_p}{\partial w_0}, \overline{W}, \overline{\frac{\partial f_p}{\partial w_0}}\right)\right\} d w_0\wedge d\overline{w}_0\geq0.
\end{equation*}
By \eqref{group 1}, we have
$$
R\left(W,\frac{\partial f_p}{\partial w_0}, \overline{W}, \overline{\frac{\partial f_p}{\partial w_0}}\right)\equiv 0.
$$
When $p\neq p_{0}, p_{1}$, we have $a_{1}=a_{3}=0.$
Then
\begin{equation*}
0=\nabla_{\frac{\partial}{\partial \overline{w}_{0}}}W=\frac{\partial a_{0}}{\partial \overline{w}_{0}}\frac{\partial}{\partial x_{0}}+\frac{\partial a_{2}}{\partial \overline{w}_{0}}\frac{\partial}{\partial x_{2}},
\end{equation*}
as $\Gamma^k_{00},\Gamma^k_{02},\Gamma^k_{22}$ are all 0 by \eqref{christoeffl}, this implies $a_{0}, a_{2}$ are constant.

At $p_{0}$, we have $\Gamma_{ij}^{k}=0, i=0,2$ and all $j$. In fact,  since $g_{ij}$ is independent of $x_{0},x_{2}$ and $g_{ts}$ with $t,s=0,2$ are constant
$$
\Gamma_{01}^{k}(p_{0})=\frac{1}{2}g^{kq}\left(g_{q0,1}-g_{01,q}\right)(p_0)=\frac{1}{2}g^{k3}\big(g_{30,1}-g_{01,3})(p_{0}\big)=0
$$
by \eqref{derivative of metric}; the Christoffel symbols vanish for other $i, j$ similarly. So at $p_0$
$$
\nabla_{\frac{\partial}{\partial x_{i}}}\frac{\partial }{\partial x_{j}}=\sum^3_{k=0}\Gamma_{ij}^{k}\frac{\partial}{\partial x_{k}}=0
$$
for $ i=0,2, j=0,1,2,3$.
It follows
\begin{equation}\label{vanish}
\nabla_{\frac{\partial}{\partial \overline{w}_{0}}}\frac{\partial }{\partial x_{j}}=0,
\end{equation}
 i.e. $\frac{\partial }{\partial x_{j}}$ are holomorphic sections. Thus $\text{Span}_\C\{\frac{\partial}{\partial x_i}: 0\leq i\leq 3\}\subseteq H_{\overline{\partial}}^{0}(\T_p^{2}, \mathbf{E}_{p}^{\mathbb{C}})$. On the other hand, for any $W\in H_{\overline{\partial}}^{0}(\T_p^{2}, \mathbf{E}_{p}^{\mathbb{C}})$
 $$
 0=\nabla_{\frac{\partial}{\partial \overline{w}_{0}}}W=\sum^3_{i=0}\frac{\partial a_{i}}{\partial \overline{w}_{0}}\frac{\partial}{\partial x_{i}}
 $$
 by \eqref{vanish}. So $a_i$ are holomorphic functions on $\T^2_p$ and they must be constant. In turn, $W\in \text{Span}_\C\{\frac{\partial}{\partial x_i}: 0\leq i\leq 3\}$.
\end{proof}

\begin{theo}\label{minimal surface}
Each fibre $\T^2_p$ of the Hopf fibration is a flat and totally geodesic minimal surface in $(\S^3\times\S^1,{g}_\epsilon)$. It is a stable minimal surface if $p=p_0,p_1$ and it is unstable if $p$ is as in Theorem \ref{unstable p}.
\end{theo}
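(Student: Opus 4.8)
The plan is to verify the flatness, total geodesy and minimality of every fibre, then stability at $p_0,p_1$, then instability away from them; each part reduces to data already in hand.

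\emph{Flatness, total geodesy, minimality.} Pulling $g_\epsilon$ back along the inclusion $f_p(w_0)=(w_0,w_1(p))$ and reading off \eqref{induced metric 1}, the induced metric on $\T^2_p$ is $(1+\epsilon^2)\pi^2\,dw_0\,d\overline{w}_0$, a constant multiple of the flat metric on the square torus $\C/\{1,\ii\}$, so $\T^2_p$ is flat. Since every $g_{ij}$ is independent of $x_0,x_2$ and the $g_{ij}$ with $i,j\in\{0,2\}$ are constant, \eqref{christoeffl} gives $\Gamma^k_{00}=\Gamma^k_{02}=\Gamma^k_{22}=0$, so $\nabla_{\partial/\partial x_i}(\partial/\partial x_j)=0$ for $i,j\in\{0,2\}$; in particular the second fundamental form of $\T^2_p$ vanishes identically, and every fibre $\T^2_p$ is a totally geodesic, hence minimal, embedded torus.

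\emph{Stability at $p_0,p_1$.} Because $\T^2_{p_0}$ is totally geodesic, the second variation of area along a section $V$ normal to $\T^2_{p_0}$ has no term quadratic in the second fundamental form; moreover $\nabla_{\partial/\partial x_0}V$ and $\nabla_{\partial/\partial x_2}V$ are then themselves normal (differentiate $g(V,\partial/\partial x_0)=g(V,\partial/\partial x_2)=0$ and use $\Gamma^k_{00}=\Gamma^k_{02}=\Gamma^k_{22}=0$), so it reduces to
\[
I_A(V,V)=\int_{\T^2_{p_0}}\Big(\big|\nabla_{\partial/\partial x_0}V\big|^2+\big|\nabla_{\partial/\partial x_2}V\big|^2-R\big(\tfrac{\partial f_{p_0}}{\partial x_0},V,\tfrac{\partial f_{p_0}}{\partial x_0},V\big)-R\big(\tfrac{\partial f_{p_0}}{\partial x_2},V,\tfrac{\partial f_{p_0}}{\partial x_2},V\big)\Big)\,dx_0\,dx_2 ,
\]
which is precisely the energy index form of Theorem~\ref{harmonic torus} evaluated at $V$ and is nonnegative there since $f_{p_0}$ is stable; equivalently, along $\T^2_{p_0}$ and $\T^2_{p_1}$ one has $x_1\equiv x_3\equiv 0$, so by Lemma~\ref{curvature} and \eqref{derivative of metric} the curvature terms above all vanish, leaving $I_A(V,V)=\int(|\nabla_{\partial/\partial x_0}V|^2+|\nabla_{\partial/\partial x_2}V|^2)\geq 0$. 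The same holds at $p_1$, so $f_{p_0},f_{p_1}$ are stable minimal embeddings.

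\emph{Instability when $p$ is as in Theorem~\ref{unstable p}.} Here I would transfer instability from the energy functional by comparison with area. Let $V$ be the destabilising field of Theorem~\ref{unstable p}, so the energy index form has $I(V,V)<0$ in the conformal class $c_p$, and put $f_t=\exp_{f_p}(tV)$. For the flat metric $\gamma=dx_0^2+dx_2^2$ representing $c_p$, the pointwise AM--GM inequality $\tfrac12\operatorname{tr}_\gamma(f_t^{*}g_\epsilon)\geq\sqrt{\det(\gamma^{-1}f_t^{*}g_\epsilon)}$ integrates to $\operatorname{Area}(f_t)\leq E(f_t)$, with equality at $t=0$ because $f_p$ is conformal; as $f_p$ is simultaneously harmonic (Theorem~\ref{harmonic torus}) and minimal (first part), both first variations vanish, whence
\[
\operatorname{Area}(f_t)-\operatorname{Area}(\T^2_p)\ \leq\ E(f_t)-E(f_p)\ =\ \tfrac{t^2}{2}\,I(V,V)+o(t^2)\ <\ 0
\]
for all small $t\neq 0$. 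Thus the nearby embedded tori $f_t(\T^2_p)$ have strictly smaller area, so $\T^2_p$ is an unstable minimal embedding. The one genuinely delicate point in the argument is this interplay of the two functionals — identifying, at the totally geodesic fibre, the area index form with the energy index form on normal fields, and, for the instability, matching the normalisation of the energy in Theorem~\ref{unstable p} to $\operatorname{Area}$ via AM--GM together with the conformality of $f_p$; both are routine once set up carefully, but neither transfer of (in)stability is automatic without it.
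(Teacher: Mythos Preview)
Your proof is correct. The treatment of flatness, total geodesy, and stability at $p_0,p_1$ matches the paper's: the paper also reads the induced metric off \eqref{metric real}, uses \eqref{christoeffl} for total geodesy, and for stability shows directly that the curvature terms $R_{0i0j},R_{0i2j},R_{2i2j}$ vanish at $w_1=0$ so that the area index reduces to $\int|\nabla^\perp X|^2\geq 0$ --- exactly your second argument. Your first stability argument, identifying $I_A$ on normal fields with the energy index form via $\nabla^\perp V=\nabla V$ (since the fibre is totally geodesic) and then invoking Theorem~\ref{harmonic torus}, is a clean alternative the paper does not spell out.

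The one genuine difference is the instability step. The paper simply cites the Ejiri--Micallef inequality $i_E\leq i_A$ from \cite{EM} to pass from energy-instability (Theorem~\ref{unstable p}) to area-instability. Your AM--GM argument, comparing $\operatorname{Area}(f_t)\leq E(f_t)$ with equality at the conformal map $f_p$, is a self-contained proof of the relevant special case; it avoids the black-box citation at the cost of reproving the easy direction of \cite{EM} by hand. Both are valid; yours is more elementary, the paper's is shorter.
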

\begin{proof}
Reading from \eqref{metric real}, ${g}_{\epsilon,00}={g}_{\epsilon,22}=(1+\epsilon^2)\pi$ and ${g}_{\epsilon,02}={g}_{\epsilon,20}=0$. It follows that $\T^2_p$ is flat for the induced metric from ${g}_\epsilon$. The Gauss formula together with \eqref{christoeffl} asserts that $\T^2_p$ is totally geodesic in $(\S^3\times\S^1,{g}_\epsilon)$.

According to a theorem of Ejiri and Micallef in \cite{EM}, the Morse index $i_A$ of the area functional for the minimal surface $\T^2_p$ is no smaller than the Morse index $i_E$ of the harmonic map $f_p$. So $\T^2_p$ is unstable as a minimal surface if $f_p$ is unstable as a harmonic map.

We now consider $p=p_0$.  Let $X$ be a normal vector field of $\T^{2}_{p_0}$ and $\nabla^{\perp}$ be the normal connection of $\T^{2}_{p_0}$. The second variation formula of area for the minimal surface $\T^2_{p_0}$ is
\begin{eqnarray}
\delta^{2} |\T^{2}_{p_0}|(X, X)&=&\int_{\T^{2}_p}\big|\nabla^{\perp}X\big|^{2}-\mbox{tr}_{g_\epsilon}R_{{g}_\epsilon}(\cdot, X,\cdot, X)-{g}_\epsilon(A, X)^{2}\\&=&\int_{\T^{2}_{p_0}}\big|\nabla^{\perp}X\big|^{2}-\mbox{tr}_{{g}_\epsilon}R_{{g}_\epsilon}(\cdot, X,\cdot, X) \nonumber
\end{eqnarray}
where $A\equiv 0$ is the second fundamental form of the totally geodesic $\T^2_{p_0}$ in $(\S^3\times\S^1,{g}_\epsilon)$.

	We claim that at $w_{1}=0$, i.e., at $p_0$,
\begin{equation}\label{curvature of torus 1}
R_{0i0j}=0, \ \ 	R_{0i2j}=0, \ \ 	R_{2i2j}=0,\ \ 0\leq i,j\leq 3.
\end{equation}
Reasoning similar as in the proof of Lemma \ref{curvature},
\begin{equation*} \begin{split}
R_{0i0j}&=-\frac{1}{2}\left(g_{00,ij}+g_{ij,00}-g_{i0,j0}-g_{j0,0i}\right)-\Gamma^{p}_{00}\Gamma^{q}_{ij}g_{pq}+\Gamma^{p}_{i0}\Gamma^{q}_{j0}g_{pq}\\&=\frac{1}{4}g^{ts}\left(g_{0t,i}-g_{0i,t}\right)\left(g_{0s,j}-g_{0j,s}\right)\\
\end{split}
\end{equation*}
and
\begin{equation*} \begin{split}
R_{0i2j}&=-\frac{1}{2}\left(g_{02,ij}+g_{ij,02}-g_{i2,j0}-g_{j0,2i}\right)-\Gamma^{p}_{02}\Gamma^{q}_{ij}g_{pq}+\Gamma^{p}_{i2}\Gamma^{q}_{j0}g_{pq}\\&=\frac{1}{4}g^{ts}(g_{2t,i}-g_{2i,t})(g_{0s,j}-g_{0j,s}).\\
\end{split}
\end{equation*}
By \eqref{derivative of metric} we see $g_{03,1}-g_{10,3}=0$ at $w_1=0$;  further, \eqref{metric real} is independent of $x_{0}$ and $x_{2}$ and the constancy of the related entries, we have $g_{0s,j}-g_{0s,j}=0$ for all $j,s$. Therefore, $R_{0i0j}=0$ and  $R_{0i2j}=0.$
Similarly,  $R_{2i2j}=0$.

Therefore, by \eqref{curvature of torus 1},  $\mbox{tr}_{{g}_\epsilon}R_{{g}_\epsilon}(\cdot, X,\cdot, X)=0$. Hence $\T^{2}_{p_0}$ is a stable minimal surface. Similarly, so is $\T^2_{p_1}$. \end{proof}

\section{Generalization to $\mathbb S^{2n-1}\times \mathbb S^{1}$} In this section, we show that Theorem \ref{main3} remains true for the Calabi-Eckmann complex $n$-manifold $\S^{2n-1}\times\S^1$.  However, our previous argument does not lead to a generalization of Theorem \ref{main4} to higher dimensions since it is not clear whether it is still the case that $R(\frac{\partial f_{p}}{\partial w_{0}}, W, \overline{\frac{\partial f_{p}}{\partial {w}_{0}}}, \overline{W})=0$ implies 
 $W=a_{0}\frac{\partial}{\partial x_{0}}+a_{n}\frac{\partial}{\partial x_{n}}$. 
 
\subsection{Hermitian structures on $\S^{2n-1}\times \S^{1}$.} Set 
$$
V_{\alpha0}=\left\{(z_0,\cdots,z_{n-1}, z_0'): (z_0,\cdots, z_{n-1})\in\S^{2n-1}\subset\C^n, (z'_0)\in\S^{1}\subset\C^{1}, z_{\alpha}z'_{0}\neq 0\right\}
$$
where $\alpha = 0,1, \cdots,n-1$. The family $\{V_{\alpha0}\}$ is an open cover of $\S^{2n-1}\times \S^{1}$. On $V_{\alpha0}$, with $\alpha, j=0,1, \cdots, n$, set 
\begin{eqnarray*}
w_{\alpha j }\ &=&\ \frac{z_{j}}{z_{\alpha}},\label{1}\,\,\,\,\,\,j\neq \alpha\\
t_{\alpha 0}\ &=&\ \frac{1}{2\pi\ii}\left(\log z_{\alpha}+\sqrt{-1}\log z'_{0}\right) \mod(1,\ii). 
\end{eqnarray*}

\begin{prop}(Calabi-Eckmann) \label{complex coordinates n}
	Each $V_{\alpha0}$ is homeomorphic to $\mathbb C^{n-1}\times \mathbb T^{2}$. On $U_{\alpha\beta}=\{(w_{\alpha j}, t_{\alpha0})\in\C^{n+1}: 0<\mathfrak{Re} \,t_{\alpha0},\mathfrak{Im}\, t_{\alpha0}<1\}\subset V_{\alpha0}$,  $(w_{\alpha j}, t_{\alpha0})$
	is a complex coordinate system of $V_{\alpha0}\subset\S^{2n-1}\times \S^{1}$. For this complex structure, the fibre bundle $\S^{2n-1}\times \S^{1}\to \mathbb CP^{n-1}$ is complex analytic and each fibre is a holomorphic nonsingular torus.
\end{prop}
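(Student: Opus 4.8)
The plan is to run the Calabi--Eckmann argument that already underlies Proposition \ref{complex coordinates}, with the roles of the two spheres interchanged: here $\S^{2n-1}$ plays the part that $\S^3$ played there, while $\S^1$ is the degenerate ($p=1$) case of the $\S^{2p-1}$ factor. Concretely I would verify three things in turn. First, that the assignment $(z_0,\dots,z_{n-1},z'_0)\mapsto\bigl((w_{\alpha j})_{j\neq\alpha},[t_{\alpha 0}]\bigr)$ is invertible with continuous inverse, which simultaneously gives the homeomorphism $V_{\alpha 0}\cong\C^{n-1}\times\T^2$ and shows that $(w_{\alpha j},t_{\alpha 0})$ is a genuine coordinate system on $U_{\alpha 0}$. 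Second, that the transition functions between overlapping charts are holomorphic, which is exactly what endows $\S^{2n-1}\times\S^1$ with the claimed complex structure. Third, that in these coordinates the Hopf projection and its fibres have the stated holomorphic description.

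For the first point: on $V_{\alpha 0}$ one has $z_\alpha z'_0\neq 0$, $\sum_j|z_j|^2=1$ and $|z'_0|=1$, so with $A_\alpha=1+\sum_{j\neq\alpha}|w_{\alpha j}|^2$ the sphere relation forces $|z_\alpha|^2=A_\alpha^{-1}$. Writing $z_\alpha=A_\alpha^{-1/2}e^{\ii\phi}$ and $z'_0=e^{\ii\psi}$ and substituting into the definition of $t_{\alpha 0}$, the real and imaginary parts of $t_{\alpha 0}$ determine $\phi$ and $\psi$ modulo $2\pi$ once the congruence class of $t_{\alpha 0}$ modulo the lattice $\{1,\ii\}$ is fixed; changing the branch of $\log z_\alpha$ or $\log z'_0$ by $2\pi\ii$ shifts $t_{\alpha 0}$ by $1$ or by $\ii$ respectively, which is why the congruence $\bmod(1,\ii)$ is the right bookkeeping. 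Hence $z_0,\dots,z_{n-1},z'_0$ are recovered uniquely and continuously from $(w_{\alpha j})_{j\neq\alpha}\in\C^{n-1}$ and $[t_{\alpha 0}]\in\T^2$, so $V_{\alpha 0}\to\C^{n-1}\times\T^2$ is a homeomorphism and $(w_{\alpha j},t_{\alpha 0})$ is a complex coordinate system on $U_{\alpha 0}$.

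For the second point: on $V_{\alpha 0}\cap V_{\beta 0}$ with $\alpha\neq\beta$ one has $w_{\beta k}=w_{\alpha k}/w_{\alpha\beta}$ (with the convention $w_{\alpha\alpha}:=1$), holomorphic in the $w_{\alpha j}$ because $w_{\alpha\beta}\neq 0$ on the overlap; and from $\log z_\beta=\log z_\alpha+\log w_{\alpha\beta}$ one gets $t_{\beta 0}=t_{\alpha 0}+\tfrac{1}{2\pi\ii}\log w_{\alpha\beta}\ \bmod(1,\ii)$, a holomorphic function of $(w_{\alpha j},t_{\alpha 0})$ on each branch of the logarithm. Thus the atlas $\{(U_{\alpha 0},(w_{\alpha j},t_{\alpha 0}))\}$ is holomorphic and defines the Calabi--Eckmann complex structure. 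For the third point, in $V_{\alpha 0}$ the projection $\pi$ is simply $(w_{\alpha j},t_{\alpha 0})\mapsto(w_{\alpha j})$, i.e. the projection $\C^{n-1}\times\T^2\to\C^{n-1}$ onto the $\alpha$-th affine chart of $\C P^{n-1}$, which is manifestly holomorphic; the fibres are the slices $\{w_{\alpha j}=\mathrm{const}\}$, each an embedded holomorphic copy of the nonsingular torus $\C/\{1,\ii\}$ parametrized by $t_{\alpha 0}$.

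I do not expect a genuine obstacle here: this is the classical Calabi--Eckmann statement specialized to a single $\S^1$ factor, and every computation is the literal translation of the one already recorded for $\S^3\times\S^{2p-1}$ in Proposition \ref{complex coordinates}. The only point demanding care is keeping track of the lattice $\bmod(1,\ii)$, both when inverting the formula for $t_{\alpha 0}$ and when writing the transition law for $t_{\beta 0}$, and this is routine.
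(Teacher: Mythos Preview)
Your outline is a correct sketch of the classical Calabi--Eckmann argument, but note that the paper does not give its own proof of this proposition: it is stated as a result of Calabi and Eckmann \cite{CE} (exactly as Proposition~\ref{complex coordinates} was in Section~2) and then used without further justification. So there is nothing to compare against beyond the original reference, and your write-up is essentially the specialization of that reference to the case $p=1$ with the first sphere of arbitrary odd dimension.
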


On $U_{00}$ we set 
$
(w_0,w_1,\cdots,w_{n-1})=(t_{00}, w_{01},\cdots,w_{0(n-1)}).
$
The inclusion map $$\vp:\S^{2n-1}\times \S^{1}\rightarrow \C^n\times \C^1$$ is expressed as
\begin{equation}\label{3d map n}
\left\{\begin{aligned}
z_{0}\ &=\ A^{-\frac{1}{2}}e^{\ii\pi(w_{0}+\overline{w}_{0})},\\
z'_{0}\ &=\ e^{\pi(w_{0}-\overline{w}_{0})-\frac{1}{2}\ii\log A},\\
z_{i}\ &=\ z_{0}w_{i}=w_{i}A^{-\frac{1}{2}}e^{\ii\pi(w_{0}+\overline{w}_{0})},  i=1,\cdots,n-1,\\
\end{aligned}\right.
\end{equation}
where $A=1+\sum_{i=1}^{n-1}|w_{i}|^{2}$, and by straight computation 
\begin{equation*}
\small{
	\left\{\begin{aligned}
	\frac{\partial z_{0}}{\partial w_{0}}&=\ii \pi z_{0},  &\frac{\partial z'_{0}}{\partial w_{0}}&= \pi z'_{0},
	&\frac{\partial z_{i}}{\partial w_{0}}&=\ii \pi z_{i},\,\, i=1,\cdots,n-1 \\
	\frac{\partial z_{0}}{\partial w_{i}}&=-\frac{1}{2}\frac{\overline{w}_{i}z_{0}}{A}, &\frac{\partial z'_{0}}{\partial w_{i}}&=-\frac{\ii}{2}\frac{\overline{w}_{i}z'_{0}}{A},&\frac{\partial z_{i}}{\partial w_{i}}&=z_{0}-\frac{1}{2}\frac{\overline{w}_{i}z_{i}}{A},\\
	~&~ &~&~ &\frac{\partial z_{i}}{\partial w_{j}}&=-\frac{1}{2}\frac{\overline{w}_{j}z_{i}}{A},\,\,j\neq i.\end{aligned}\right.}
\end{equation*}

Take the Hermitian metrics on $\C^n\times \C^1$
$$
h_{\epsilon}=\displaystyle\sum_{i=0}^{n-1}dz_{i}\otimes d\overline{z}_{i}+\epsilon^{2}dz'_{0}\otimes d\overline{z}'_{0}, \ \ \ \epsilon\in [0,1].
$$
For any $X\in T^{1,0}(\S^{2n-1}\times\S^1)$, let $\vp_*(X)^{1,0}\in T^{1,0}(\C^n\times\C^1)$ be the $(1,0)$ part of the push forward $\vp_*X$. Then
$$
g_\epsilon(X,Y) = h_\epsilon\left(\vp_*(X)^{1,0},\vp_*(Y)^{1,0}\right),\,\,\,\,X,Y\in T^{1,0}(\S^{2n-1}\times\S^1)
$$
is a Hermitian metric on the complex manifold $\S^{2n-1}\times\S^1$. Its components
$$
{g}_{\epsilon, i\overline{j}}=h_{\epsilon}\left(\vp_{*}\left(\frac{\partial}{\partial w_{i}}\right)^{1,0}, \vp_{*}\left(\frac{\partial}{\partial w_{j}}\right)^{1,0}\right)
$$ 
are given by the Hermitian matrix
\begin{equation}\label{induced metric n}
\begin{aligned}
\big({g}_{\epsilon,i\overline{j}}\big)=\left(
\begin{matrix}
(1+\epsilon^{2})\pi^2& \ \ \frac{(1+\epsilon^{2})\ii \pi }{2}\frac{w_{1}}{A}  &  \cdots& \ \ \frac{(1+\epsilon^{2})\ii \pi }{2}\frac{w_{i}}{A} &\cdots&\ \ \frac{(1+\epsilon^{2})\ii \pi }{2}\frac{w_{n-1}}{A}\\[3mm]
- \frac{(1+\epsilon^{2})\ii\pi}{2}\frac{\overline{w}_{1}}{A}& \ \ \frac{1}{A}+\frac{(-3+\epsilon^{2})}{4}\frac{|w_{1}|^2}{A^2}&\cdots &\ \ \frac{(-3+\epsilon^{2})}{4} \frac{\overline{w}_{1}w_{i}}{A^{2}}&\cdots &\frac{(-3+\epsilon^{2})}{4} \frac{\overline{w}_{1}w_{n-1}}{A^{2}}\\[3mm]
\vdots&\vdots&\cdots&\vdots&\cdots&\vdots\\
-\frac{\ii(1+\epsilon^{2})\pi}{2}\frac{\overline{w}_{n-1}}{A}& \ \ \frac{(-3+\epsilon^{2})}{4} \frac{\overline{w}_{n-1}w_{1}}{A^{2}} &\cdots&\ \ \frac{(-3+\epsilon^{2})}{4} \frac{\overline{w}_{n-1}w_{i}}{A^{2}} &\cdots&\ \ \frac{1}{A}+\frac{(-3+\epsilon^{2})}{4}\frac{|w_{n-1}|^2}{A^2}\\
\end{matrix}\right).
\end{aligned}
\end{equation}
It is positive definite since $\det ({g}_{\epsilon,i\overline{j}})=\frac{1+\epsilon^{2}}{A^{n}}\pi^{2}. $
\subsection{The results}

Let ${\mathbf E}^{\mathbb{C}}_p=f^{*}_pT(\S^{2n-1}\times \S^{1})\otimes \mathbb{C}$ be the complexified pullback bundle over $\T^2_p$.
Denote $w_{i}=x_{i}+\ii x_{i+n}$. For a Hermitian metric $g$, locally written as $\mathcal A+\ii \mathcal B$, the underlying Riemannian metric is 
\begin{equation}\label{Rie}
\frac{1}{2}\begin{pmatrix}
\mathcal A&-\mathcal B\\
\mathcal B&\mathcal A
\end{pmatrix}.
\end{equation} 
Let $g_{ij}$ denote the components of the Riemannian metric arising from $g_\epsilon$. As in section 3, the following Riemannian geometric properties of $g$ are crucial in our study of stability.  
\begin{prop}\label{tool} In $V_{00}$, we have 
	\begin{enumerate}
		\item \label{christoeffel 3}
		$\Gamma_{kl}^{p}=0$, for $k, l=0,n$ and $ 0\leq p\leq 2n-1.$
		\item \label{order of first} 
		$
		|g_{0p,q}-g_{0q,p}|=O(|w'|)$ \ and \ $\label{order of first 3}
		|g_{np,q}-g_{nq,p}|=O(|w'|)$ 
		where $w'=(w_{1}, \cdots, w_{n-1})$, $0\leq p,q\leq 2n-1$.
		\item Moreover, for $0\leq r,s,p,q\leq 2n-1$, 
		\begin{equation*}
		R_{0r0s}=\Gamma^{p}_{r0}\Gamma^{q}_{s0}g_{pq},
		\end{equation*}	
			\begin{equation*}
		R_{0rns}=\Gamma^{p}_{rn}\Gamma^{q}_{s0}g_{pq},
		\end{equation*}	
		\begin{equation*}
		R_{nrns}=\Gamma^{p}_{rn}\Gamma^{q}_{sn}g_{pq}
		\end{equation*}	
		and
		$$
		R_{0n0r}=R_{n0nr}=0.
		$$
		\item \label{4} $R(\frac{\partial f_{p}}{\partial w_{0}}, W, \overline{\frac{\partial f_{p}}{\partial {w}_{0}}}, \overline{W})\geq 0$,  for any $W\in {\mathbf E}^{\mathbb{C}}_p$.
	\end{enumerate}
\end{prop}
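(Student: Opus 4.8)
The plan is to follow the $n=2$ template set by Lemma \ref{curvature} and Proposition \ref{isotropy curvature}, since the whole statement reduces to two structural facts about the real metric $(g_{ij})$ obtained from \eqref{induced metric n} via \eqref{Rie}. First, every entry of \eqref{induced metric n} depends only on $w'=(w_1,\dots,w_{n-1})$ and its conjugate, so $g_{ij,0}=g_{ij,n}=0$ for all $i,j$ (throughout, $w_0=x_0+\ii x_n$). Second, $g_{\epsilon,0\bar0}=(1+\epsilon^2)\pi^2$ is real and constant, so by \eqref{Rie} the $2\times2$ block $(g_{kl})_{k,l\in\{0,n\}}$ is the constant matrix $\tfrac12(1+\epsilon^2)\pi^2\,\mathrm{Id}$. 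Granting these, \textbf{part (1)} is the exact analogue of \eqref{christoeffl}: in $\Gamma^p_{kl}=\tfrac12 g^{pm}(g_{km,l}+g_{lm,k}-g_{kl,m})$ with $k,l\in\{0,n\}$, the first two summands differentiate a metric component along $x_0$ or $x_n$ and vanish, while the third vanishes because $g_{kl}$ is constant.

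For \textbf{part (2)} I would split by the position of $q$. If $q\in\{0,n\}$ then $g_{0p,q}=0$ by the first fact and $g_{0q,p}=0$ since $g_{0q}$ is constant, so $g_{0p,q}-g_{0q,p}\equiv0$; the same reasoning handles $g_{np,q}-g_{nq,p}$. If instead $p,q\notin\{0,n\}$, I would read off from the first (resp. the $n$-th) row of \eqref{induced metric n} that each such $g_{0p}$ (resp. $g_{np}$) is, up to a real constant, the real or imaginary part of $\ii w_j/A$ for one index $j$; differentiating and antisymmetrizing exactly as in the computation behind \eqref{derivative of metric} cancels the leading $A^{-1}$ terms and leaves $A^{-2}$ times a homogeneous quadratic polynomial in the $x$-coordinates of $w'$, hence $O(|w'|^2)=O(|w'|)$.

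For \textbf{part (3)} I would substitute into the curvature identity recalled just before Lemma \ref{curvature}, $R_{ijkl}=-\tfrac12(g_{ik,jl}+g_{jl,ik}-g_{jk,il}-g_{il,jk})-\Gamma^p_{ik}\Gamma^q_{jl}g_{pq}+\Gamma^p_{jk}\Gamma^q_{il}g_{pq}$, taking $(i,j,k,l)$ to be $(0,r,0,s)$, then $(0,r,n,s)$, then $(n,r,n,s)$. In each case all four second-derivative terms vanish (each differentiates once along $x_0$ or $x_n$, except $g_{00,rs}$, $g_{0n,rs}$, $g_{nn,rs}$, which vanish because $g_{00},g_{0n},g_{nn}$ are constant), and one of the two Christoffel products drops by part (1); what survives is exactly $R_{0r0s}=\Gamma^p_{r0}\Gamma^q_{s0}g_{pq}$, $R_{0rns}=\Gamma^p_{rn}\Gamma^q_{s0}g_{pq}$, $R_{nrns}=\Gamma^p_{rn}\Gamma^q_{sn}g_{pq}$. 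Specializing $r$ or $s$ to $\{0,n\}$ and using $\Gamma^p_{00}=\Gamma^p_{0n}=\Gamma^p_{nn}=0$ yields $R_{0n0r}=R_{n0nr}=0$, and more generally that $R_{0r0s},R_{0rns},R_{nrns}$ all vanish whenever $r$ or $s$ lies in $\{0,n\}$.

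Finally, for \textbf{part (4)}, since $f_p$ is affine in these coordinates one has $\frac{\partial f_p}{\partial w_0}=\tfrac12(\partial_{x_0}-\ii\,\partial_{x_n})$ as a section of the complexified pullback bundle. Writing $W=\sum_{i=0}^{2n-1}a_i\,\partial_{x_i}$ and expanding $R(\frac{\partial f_p}{\partial w_0},W,\overline{\frac{\partial f_p}{\partial w_0}},\overline W)$ multilinearly, part (3) together with the pair symmetry $R_{abcd}=R_{cdab}$ leaves only the terms $R_{0i0j},R_{0inj},R_{ni0j},R_{ninj}$ with $i,j\notin\{0,n\}$; collecting these shows the expression equals $\tfrac14|U|^2_g$, where $U:=\sum_i a_i\,\nabla_{\partial_{x_0}+\ii\,\partial_{x_n}}\partial_{x_i}=\sum_i a_i(\Gamma^p_{i0}+\ii\,\Gamma^p_{in})\partial_{x_p}$, in exact parallel with the $n=2$ identity \eqref{group 1}; nonnegativity is then immediate. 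I do not expect a genuine obstacle: the only feature absent when $n=2$ is the off-diagonal ``base'' entries $\tfrac{\epsilon^2-3}{4}\,\overline{w}_i w_j/A^2$ of \eqref{induced metric n}, but these never occur in the first or $n$-th row, so they do not affect part (2); the bulk of the work is the index bookkeeping in parts (3)--(4), tracking which of the $2(n-1)$ ``base'' directions contribute, and the mechanism is identical to the surface case.
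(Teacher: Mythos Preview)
Your proposal is correct and follows essentially the same route as the paper: both arguments rest on the two structural facts that $g_{ij}$ is independent of $x_0,x_n$ and that the $\{0,n\}\times\{0,n\}$ block is constant, then feed these into the Christoffel and curvature formulas exactly as in the $n=2$ case. The paper's treatment of part (2) is slightly more explicit---it writes out the antisymmetrized derivatives case by case (the analogues of \eqref{derivative of metric}) rather than arguing abstractly that the $A^{-1}$ terms cancel---and for part (4) it names the components $b_p=\sum_r\Gamma^p_{0r}a_r$, $c_p=\sum_r\Gamma^p_{nr}a_r$ and shows the curvature equals $\sum g_{pq}(b_p+\ii c_p)(\bar b_q-\ii\bar c_q)$, which is exactly your $|U|^2_g$ up to the overall $\tfrac14$ normalization. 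One small omission: in part (2) your case split handles $q\in\{0,n\}$ but not $p\in\{0,n\}$ with $q\notin\{0,n\}$; this follows by the same two facts ($g_{0p}$ constant and $g_{0q,p}=0$), so it is not a genuine gap.
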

\begin{proof}
	By \eqref{induced metric n} and \eqref{Rie}, $g$ is independent of $x_{0}$ and $x_{n}.$ 
	We have for $1\leq i\leq n-1$, 
	\begin{equation*}
	\begin{aligned}
	g_{00}&=\frac{1}{2}(1+\epsilon^{2})\pi^{2},\,g_{0n}=0,\, g_{0i}=-\frac{(1+\epsilon)^{2}\pi}{4}\frac{x_{i+n}}{A},\, g_{0(n+i)}=-\frac{(1+\epsilon)^{2}\pi}{4}\frac{x_{i}}{A};\\
	g_{n0}&=0,\ g_{nn}=\frac{1}{2}(1+\epsilon^{2})\pi^{2},\,  g_{ni}=\frac{(1+\epsilon)^{2}\pi}{4}\frac{x_{i}}{A}, \ g_{n(n+i)}=-\frac{(1+\epsilon)^{2}\pi}{4}\frac{x_{i+n}}{A}.\\
	\end{aligned}
	\end{equation*}
	Then \eqref{christoeffel 3} follows immediately: 
	\[\Gamma_{00}^{p}=\frac{1}{2}g^{pq}(2g_{0q,0}-g_{00,q})=0,\]
	since $g$ is independent of $x_{0}$ and  $x_{n}$.
	
	If $p=0,n$ or $q=0,n$, then \eqref{order of first} is obvious. 
	For $1\leq i,j\leq n-1,$
	\begin{equation}\label{derivative}
	\begin{split}
	&g_{0i,j}-g_{0j,i}=(1+\epsilon^{2})\frac{\pi}{2A^{2}}\left(x_{n+i}x_{j}-x_{n+j}x_{i}\right), \\
	&g_{0(n+i),(n+j)}-g_{0(n+j),(n+i)}=(1+\epsilon^{2})\frac{\pi}{2A^{2}}\left(x_{i}x_{n+j}-x_{j}x_{n+i}\right),\\
	&g_{0i,(n+j)}-g_{0(n+j),i}=(1+\epsilon^{2})\frac{\pi}{2A^{2}}\left(x_{n+i}x_{n+j}-x_{j}x_{i}\right).
	\end{split}
	\end{equation}
	Similarly, we have, for $ 1\leq i,j\leq n-1,$
	\begin{equation}\label{derivative 2}
	\begin{split}
	&g_{ni,j}-g_{nj,i}=0,\\
	&g_{n(n+i),(n+j)}-g_{n(n+j),(n+i)}=0,\\
	&g_{ni,(n+j)}-g_{n(n+j),i}=(1+\epsilon^{2})\frac{\pi}{2A^{2}}\left(-2x_{i}x_{n+j}\right).
	\end{split}
	\end{equation}
	Then $(2)$ follows.
	
	To prove (3), note that for $0\leq r,s \leq 2n-1,$	
	\begin{equation*} \begin{split}
	R_{0r0s}&=-\frac{1}{2}\left(g_{00,rs}+g_{rs,00}-g_{r0,s0}-g_{s0,0r}\right)-\Gamma^{p}_{00}\Gamma^{q}_{rs}g_{pq}+\Gamma^{p}_{r0}\Gamma^{q}_{s0}g_{pq}=\Gamma^{p}_{r0}\Gamma^{q}_{s0}g_{pq}\\
\end{split}
	\end{equation*}
	Similarly, $R_{nrns}=\Gamma^{p}_{rn}\Gamma^{q}_{sn}g_{pq}$ and $R_{0rns}=\Gamma^{p}_{rn}\Gamma^{q}_{s0}g_{pq}$.
	It follows from \eqref{christoeffel 3} that $R_{0i0r}=R_{ninr}=0$ when $i=0, n$ and $0\leq r\leq 2n-1.$ 
	
	\vspace{.1cm}
	
	To prove \eqref{4}, assume $W=\sum_{r=0}^{2n-1}a_{r}\frac{\partial}{\partial x_{r}},$ where $a_{r}$ are complex functions. 
	Then 
	\begin{equation*}
	\begin{split}
	R\left(\frac{\partial f_{p}}{\partial w_{0}}, W, \overline{\frac{\partial f_{p}}{\partial {w}_{0}}}, \overline{W}\right)&=\sum_{r, s=0}^{2n-1}a_{r}\bar{a}_{s}R_{0r0s}+\ii a_{r}\bar{a}_{s}R_{0rns}-\ii a_{r}\bar{a}_{s}R_{nr0s}+a_{r}\bar{a}_{s}R_{nrns}\\
	&=\sum_{r,s, p,q=0}^{2n-1}a_{r}\bar{a}_{s}g_{pq}(\Gamma^{p}_{0r}\Gamma^{q}_{0s}+\ii \Gamma^{p}_{0s}\Gamma^{q}_{nr}-\ii \Gamma^{p}_{ns}\Gamma^{q}_{0r}+\Gamma^{p}_{nr}\Gamma^{q}_{ns}).
	\end{split}
	\end{equation*} 
	Denote $b_{p}=\sum_{r=0}^{2n-1}\Gamma^{p}_{0r}a_{r}$ and $c_{p}=\sum_{r=0}^{2n-1}\Gamma^{p}_{nr}a_{r}$.
	Then \[R\left(\frac{\partial f_{p}}{\partial w_{0}}, W, \overline{\frac{\partial f_{p}}{\partial {w}_{0}}}, \overline{W}\right)=\sum_{p,q=0}^{2n-1}g_{pq}(b_{p}+\ii c_{p})(\bar{b}_{q}-\ii \bar{c}_{q})\geq 0\]	
	as required.
\end{proof}
Let $S_{i}=\overbrace{(0,\cdots, 0)}^{{}i-1}\times \mathbb S^{1} \times\overbrace{(0,\cdots, 0)}^{{}n-i}\subset\mathbb{C}^{n}$ and $T_{i}=S_{i}\times \mathbb S^{1}\subset \mathbb S^{2n-1}\times \mathbb S^{1}, 1\leq i\leq n.$

\begin{theo}\label{harmonic torus 3}
	Each $f_p$ is a harmonic map from $(\T^2_p,c_p)$ to $(\S^{2n-1}\times\S^1, {g}_\epsilon)$. There exist neighbourhoods of $T_{i}, 1\leq i\leq n,$ such that $f_p$ is a stable harmonic map when $p$ belongs to the neighbourhoods. 
\end{theo}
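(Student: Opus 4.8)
The plan is to reproduce the proof of Theorem~\ref{harmonic torus} essentially verbatim, with Proposition~\ref{tool} playing the role that Lemma~\ref{curvature} and \eqref{derivative of metric} played in the case $n=2$. By the symmetry among the charts $V_{00},V_{10},\dots,V_{(n-1)0}$ it suffices to treat a neighbourhood of $T_1$, which in $(U_{00};w_0,\dots,w_{n-1})$ is the locus $w':=(w_1,\dots,w_{n-1})=0$ (recall $w_0=x_0+\i x_n$); the cases $T_i$, $2\le i\le n$, follow by relabelling the spherical coordinates. For harmonicity, I would write $f_p(w_0)=(w_0,w_1(p),\dots,w_{n-1}(p))$ in $U_{00}$: this is holomorphic, so the component functions are harmonic for any metric in the conformal class $c_p$ and the first term of Sampson's equation \eqref{harmonic map} vanishes, while in the second term only $\Gamma^i_{00},\Gamma^i_{0n},\Gamma^i_{nn}$ occur, since $f_p^j$ is constant for $j\neq 0,n$, and these vanish by Proposition~\ref{tool}(\ref{christoeffel 3}). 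Hence $f_p$ is harmonic.

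For stability, assume $p$ close enough to $T_1$ that $f_p(\T^2_p)\subset V_{00}$; then $\mathbf E_p=f_p^{*}T(\S^{2n-1}\times\S^1)$ is trivialized by $\partial/\partial x_0,\dots,\partial/\partial x_{2n-1}$. Given $V=\sum_{i=0}^{2n-1}a_i\,\partial/\partial x_i$ with $a_i\in C^\infty(\T^2_p)$, split $\nabla_{\partial/\partial x_0}V=V_1+V_2$, $\nabla_{\partial/\partial x_n}V=V_3+V_4$ as in \eqref{cd}, with $V_1=\sum_i a_i\nabla_{\partial/\partial x_0}\partial/\partial x_i$, $V_2=\sum_i(\partial a_i/\partial x_0)\partial/\partial x_i$, and similarly $V_3,V_4$. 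Since $\partial f_p/\partial x_0=\partial/\partial x_0$ and $R_{0r0s}=\Gamma^p_{r0}\Gamma^q_{s0}g_{pq}$ by Proposition~\ref{tool}(3), expansion gives $R(\partial f_p/\partial x_0,V,\partial f_p/\partial x_0,V)=\sum_{r,s}a_ra_sR_{0r0s}=|V_1|^2_g$, and likewise for $x_n$. Moreover $\Gamma^p_{00}=\Gamma^p_{0n}=0$ forces $V_1=\sum_i a_i\Gamma^p_{0i}\,\partial/\partial x_p$ with $i$ running only over the $2(n-1)$ \emph{base} indices $\{1,\dots,n-1\}\cup\{n+1,\dots,2n-1\}$; combining this with $\partial g_{ij}/\partial x_0=0$ (the metric is independent of $x_0,x_n$) and the vanishing of $\nabla_{\partial/\partial x_0}\partial/\partial x_0,\nabla_{\partial/\partial x_0}\partial/\partial x_n$ shows $g(V_1,V_2)=\sum_{i<j}\bigl(a_j\tfrac{\partial a_i}{\partial x_0}-a_i\tfrac{\partial a_j}{\partial x_0}\bigr)c^{0}_{ij}$, sum over base indices, with $c^{0}_{ij}=g(\partial/\partial x_i,\nabla_{\partial/\partial x_0}\partial/\partial x_j)=\tfrac12(g_{0i,j}-g_{0j,i})$ depending only on $w'$; the analogous identity holds for $g(V_3,V_4)$ with $c^{n}_{ij}=\tfrac12(g_{ni,j}-g_{nj,i})$. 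Therefore
\begin{equation*}
I(V,V)=\int_{\T^2_p}\Bigl\{\,|V_2|^2_g+|V_4|^2_g+2g(V_1,V_2)+2g(V_3,V_4)\,\Bigr\}\,dx_0\,dx_n .
\end{equation*}

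The key point is that $c^{0}_{ij}$ and $c^{n}_{ij}$ are $O(|w'|)$ by Proposition~\ref{tool}(\ref{order of first}), hence vanish on $T_1$, and are independent of $x_0,x_n$. Writing $2(a_j\partial_{x_0}a_i-a_i\partial_{x_0}a_j)=4a_j\partial_{x_0}a_i-2\partial_{x_0}(a_ia_j)$ and integrating the exact term to zero, then using $\int_{\T^2_p}\partial_{x_0}a_i=0$, Poincar\'e's inequality (to replace $a_j$ by $a_j-\mathrm{Vol}(\T^2_p)^{-1}\!\int a_j$), and Cauchy--Schwarz, each cross term is bounded by $C\,\|w'\|_{C^0(f_p(\T^2_p))}$ times an integral of $|\partial_{x_0}a_i|^2,|\partial_{x_0}a_j|^2,|\partial_{x_n}a_j|^2$, which is in turn controlled by a constant multiple of $\int(|V_2|^2_g+|V_4|^2_g)$ thanks to the uniform equivalence (for $|w'|\le1$) of $(g_{ij})$ with the Euclidean metric. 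Shrinking the neighbourhood of $T_1$ so that the total constant times $\|w'\|_{C^0}$ is at most $\tfrac12$ yields $I(V,V)\ge\tfrac12\int(|V_2|^2_g+|V_4|^2_g)\ge0$, so $f_p$ is stable there, and the same argument near each $T_i$ finishes the proof. I expect the only real work beyond the $n=2$ case to be bookkeeping: there are now $2(n-1)$ base directions and hence $\binom{2(n-1)}{2}$ cross coefficients $c^{0}_{ij},c^{n}_{ij}$ to absorb rather than a single one, but each is $O(|w'|)$ by Proposition~\ref{tool}(\ref{order of first}) and is killed by the same integration-by-parts-plus-Poincar\'e estimate; one should only check that the finitely many constants produced can be taken uniform in $p$ over a fixed neighbourhood of $T_i$, which is clear since they arise from the $C^0$-norms of smooth, $x_0,x_n$-independent functions on a compact set, so no new geometric input is required.
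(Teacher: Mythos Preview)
Your proposal is correct and follows essentially the same route as the paper's own proof: harmonicity via Proposition~\ref{tool}(\ref{christoeffel 3}), the identity $|V_1|^2_g=R(\partial f_p/\partial x_0,V,\partial f_p/\partial x_0,V)$ from Proposition~\ref{tool}(3), the reduction of $g(V_1,V_2)$ to cross terms with coefficients $\tfrac12(g_{0i,j}-g_{0j,i})=O(|w'|)$, and the Poincar\'e/Cauchy--Schwarz absorption into $\int(|V_2|^2_g+|V_4|^2_g)$. Your antisymmetric $\sum_{i<j}$ packaging and explicit integration-by-parts of $\partial_{x_0}(a_ia_j)$ are cosmetic rearrangements of the paper's $\sum_{i\neq j}$ formulation, and your remark that the finitely many $O(|w'|)$ constants can be absorbed uniformly is exactly the point the paper leaves implicit when it bounds a single cross term by $\tfrac14\!\int(|V_2|^2_g+|V_4|^2_g)$.
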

\begin{proof} Harmonicity of $f_p$ follows as in the proof of Theorem \ref{harmonic torus}. Next, we examine the stability of these harmonic tori. Let ${\mathbf E}_p=f^{*}_pT(\S^{2n-1}\times \S^{1})$ be the pullback bundle over $\T^2_p$. Let $\nabla$ denote the pullback Riemannian connection of $g_\epsilon$. For convenience, we denote $g_{\epsilon}$ by $g$.
	The second derivative of energy at $f_p$ along a variation field $V$ is 
	\begin{equation*}
	I(V, V)=\int_{\T_p^{2}}\left\{\big|\nabla_{\frac{\partial }{\partial x_{0}}}V\big|^{2}+\big|\nabla_{\frac{\partial }{\partial x_{n}}}V\big|^{2}-R\left(\frac{\partial f_{p}}{\partial x_{0}}, V,\frac{\partial f_{p}}{\partial x_{0}}, V\right)-R\left(\frac{\partial f_{p}}{\partial x_{n}}, V,\frac{\partial f_{p}}{\partial x_{n}}, V\right)\right\}dx_{0}dx_{n}.
	\end{equation*}

As in the proof of Theorem \ref{harmonic torus}, a smooth section of $\mathbf E_p$ can be written as $V=\sum_{i=0}^{2n-1}a_{i}\frac{\partial}{\partial x_{i}}$ for some functions $a_i\in C^\infty(\T^{2})$. Set 
	$V_{1}=a_{i}\nabla_{\frac{\partial}{\partial x_{0}}}\frac{\partial}{\partial x_{i}}, 
	V_{2}=\frac{\partial a_{i}}{\partial x_{0}}\frac{\partial}{\partial x_{i}},
	V_{3}=a_{i}\nabla_{\frac{\partial}{\partial x_{n}}}\frac{\partial}{\partial x_{i}},
	V_{4}=\frac{\partial a_{i}}{\partial x_{n}}\frac{\partial}{\partial x_{i}}.$
Then $\nabla_{\frac{\partial }{\partial x_{0}}}V=V_{1}+V_{2}$ and $\nabla_{\frac{\partial }{\partial x_{3}}}V=V_{3}+V_{4}.$ 

\vspace{.1cm}

Similar to \eqref{S1}, \eqref{S2} and  \eqref{S3}, we have
	\begin{equation}\label{S2 3}
	\begin{split}
\big|V_{1}\big|^{2}_{g}=&\sum_{i,j\neq 0,n}a_{i}a_{j}\Gamma_{0i}^{p}\Gamma_{0j}^{q}g_{pq}=R\left(\frac{\partial f_{p}}{\partial x_{0}}, V, \frac{\partial f_{p}}{\partial x_{0}}, V\right),\\
\big|\nabla_{\frac{\partial }{\partial x_{0}}}V\big|^{2}_{g}=&\ \big|V_{1}|^{2}_{g}+|V_{2}\big|^{2}_{g}+2g(V_{1}, V_{2}),\\
	g(V_{1}, V_{2})=&\sum_{i, j\neq 0,n;i\neq j}a_j\frac{\partial a_{i}}{\partial x_{0}}\,g\left(\frac{\partial }{\partial x_{i}},\nabla_{\frac{\partial }{\partial x_{0}}}
	\frac{\partial}{\partial x_{j}}\right).\\
	\end{split}
	\end{equation}
	In conclusion, 
	\begin{equation}\label{SV2 3}
	\big|\nabla_{\frac{\partial }{\partial x_{0}}}V\big|^{2}_{g}-R\left(\frac{\partial f_{p}}{\partial x_{0}}, V, \frac{\partial f_{p}}{\partial x_{0}}, V\right)=\left|V_{2}\right|_{g}^{2}+2\sum_{i, j\neq 0,n; i\neq j}a_j\frac{\partial a_{i}}{\partial x_{0}}\,g\left(\frac{\partial }{\partial x_{i}},\nabla_{\frac{\partial }{\partial x_{0}}}
	\frac{\partial}{\partial x_{j}}\right).
	\end{equation}
	Also note that $g\left(\frac{\partial }{\partial x_{i}},\nabla_{\frac{\partial }{\partial x_{0}}}\frac{\partial}{\partial x_{j}}\right)$ is independent of $x_{0}, x_{n}.$ 
	Therefore
	\begin{equation}\label{stable 1 3}
	\begin{split}
	&\int_{\T_{p}^{2}}\left\{\big|\nabla_{\frac{\partial }{\partial x_{0}}}V\big|^{2}_{g}-R\left(\frac{\partial f_{p}}{\partial x_{0}}, V, \frac{\partial f_{p}}{\partial x_{0}}, V\right)\right\}\,dx_{0}dx_{n} \\
	=&\int_{\T_{p}^{2}}\big|V_2\big|^2_gdx_{0}dx_{n}+2\sum_{i, j\neq 0,n; i\neq j}g\left(\frac{\partial }{\partial x_{i}},\nabla_{\frac{\partial }{\partial x_{0}}}
	\frac{\partial}{\partial x_{j}}\right)\int_{\T_{p}^{2}}a_j\frac{\partial a_{i}}{\partial x_{0}}\,dx_{0}dx_{n}.
	\end{split}
	\end{equation}
	
	There is a uniform positive constant $C$ arising from the smallest eigenvalue of the symmetric matrix $(g_{ij})$ for $|w'|\leq 1$ such that 
	\[
	\int_{\T_{p}^{2}}\big|V_{2}\big|_{g}^{2}dx_{0}dx_{n}+\int_{\T_{p}^{2}}\big|V_{4}\big|_{g}^{2}dx_{0}dx_{n}\geq \frac{1}{C}\int_{\T_{p}^{2}}\sum_{i\neq 0,n}\left|\frac{\partial a_{i} }{\partial x_{0}}\right|^{2} +\sum_{i\neq 0,n}\left|\frac{\partial a_{i }}{\partial x_{n}}\right|^{2}dx_{0}dx_{n}.
	\]
	
	Note that $\int_{\T_{p}^{2}}\frac{\partial a_{i} }{\partial x_{0}}dx_0dx_n=0.$
	By Poincar\'e's inequality and Proposition \ref{tool}, if $|w'|$ is small enough, we have
	\begin{equation}\label{SV 2 3}
	\begin{split}
	\left|\int_{\T_{p}^{2}}\right.&\left. a_j\frac{\partial a_{i} }{\partial x_{0}} g\left(\frac{\partial }{\partial x_{i}},\nabla_{\frac{\partial }{\partial x_{0}}}\frac{\partial}{\partial x_{j}}\right)dx_{0}dx_{n}\right|\\
	\leq& C_{1}|w'|\left(\int_{\T_{p}^{2}}\left|\frac{\partial a_{i} }{\partial x_{0}}\right|^{2}dx_{0}dx_{n} +\int_{\T_{p}^{2}}\left\{\left|\frac{\partial a_{j} }{\partial x_{0}}\right|^{2}+\left|\frac{\partial a_{j} }{\partial x_{n}}\right|^{2}\right\}dx_{0}dx_{n}\right)\\
	\leq& \frac{1}{4}\left(\int_{\T_{p}^{2}}\big|V_{2}\big|_{g}^{2}dx_{0}dx_{n}+\int_{\T_{p}^{2}}\big|V_{4}\big|_{g}^{2}dx_{0}dx_{n}\right),
	\end{split}
	\end{equation}
where $C_{1}$ is  a uniform constant.	
	
	
	Similarly with \eqref{stable 1 3}, we have
	\begin{equation}\label{stable 2 3}
	\begin{split}
	&\int_{\T_{p}^{2}}\left\{\big|\nabla_{\frac{\partial }{\partial x_{n}}}V\big|^{2}_{g}-R\left(\frac{\partial f_{p}}{\partial x_{n}}, V, \frac{\partial f_{p}}{\partial x_{n}}, V\right)\right\}dx_{0}dx_{n}\\
	=&\int_{\T_{p}^{2}}\left\{\big|V_{4}\big|_{g}^{2}+2\sum_{i, j\neq 0,n; i\neq j}g\left(\frac{\partial }{\partial x_{i}},\nabla_{\frac{\partial }{\partial x_{n}}}
	\frac{\partial}{\partial x_{j}}\right)\int_{\T_{p}^{2}}a_j\frac{\partial a_{i}}{\partial x_{n}}\,dx_{0}dx_{n}\right\}dx_{0}dx_{n}.
	\end{split}
	\end{equation}
	When $|w'|$ is small enough, as argued above, we have
	\begin{equation*}
	\left|\int_{\T_{p}^{2}}a_j\frac{\partial a_{i} }{\partial x_{n}}g\left(\frac{\partial }{\partial x_{i}},\nabla_{\frac{\partial }{\partial x_{n}}}\frac{\partial}{\partial x_{j}}\right)dx_{0}dx_{n}\right|
	\leq \frac{1}{4}\left(\int_{\T_{p}^{2}}\big|V_{2}\big|_{g}^{2}dx_{0}dx_{n}+\int_{\T_{p}^{2}}\big|V_{4}\big|_{g}^{2}dx_{0}dx_{n}\right).
	\end{equation*}
	Combining with \eqref{stable 1 3}, \eqref{SV 2 3} and \eqref{stable 2 3},  we conclude $I(V, V)\geq 0$. Hence $f_p$ is a stable harmonic map when $p$ is in some neighbourhood of $T_{1}$. Similarly,  $f_p$ is a stable harmonic map when $p$ is in some neighbourhood of $T_{i}$, if we consider $V_{(i-1)0}$, $i=1,\cdots, n$.
\end{proof}
\begin{theo}\label{unstable p n}
	The harmonic map $f_{p}$ is unstable, when $x_{1}=0$, $|x_{n+1}|>\hspace{-.15cm}\sqrt{2}$ and $w_{2}=\cdots=w_{n-1}=0$.
\end{theo}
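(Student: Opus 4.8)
The plan is to exhibit a single destabilizing variation field, following verbatim the scheme in the proof of Theorem \ref{unstable p}. By Theorem \ref{harmonic torus 3} the map $f_p$ is harmonic, so it suffices to produce a section $V$ of $\mathbf E_p$ with $I(V,V)<0$. Since $x_1=0$ and $w_2=\cdots=w_{n-1}=0$, the fibre $\T^2_p$ lies in the chart $V_{00}$, and on the four real directions $x_0,x_1,x_n,x_{n+1}$ the Riemannian metric coming from \eqref{induced metric n} via \eqref{Rie} restricts to exactly the metric studied in Section 3: one has $A=1+x_{n+1}^2$, $g_{11}=g_{(n+1)(n+1)}$, $g_{1,(n+1)}=0$, $g$ is independent of $x_0,x_n$, and the needed first derivatives are furnished by \eqref{derivative} and \eqref{derivative 2}. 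Thus the whole computation reduces to the one for $\S^3\times\S^1$ with the index $3$ replaced by $n+1$.

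Concretely, I would take $V=a_1\frac{\partial}{\partial x_1}+a_{n+1}\frac{\partial}{\partial x_{n+1}}$ with $a_1=\frac{1}{2\pi}\cos(2\pi x_0)$ and $a_{n+1}=\frac{1}{2\pi}\sin(2\pi x_0)$, a global section of $\mathbf E_p$ because $\T^2_p\subset V_{00}$ has trivial pullback tangent bundle. In \eqref{SV2 3} only the index pairs $(1,n+1)$ and $(n+1,1)$ survive in the cross term, and using $\partial_{x_0}g_{1,(n+1)}=0$ to write $g(\frac{\partial}{\partial x_{n+1}},\nabla_{\frac{\partial}{\partial x_0}}\frac{\partial}{\partial x_1})=-g(\frac{\partial}{\partial x_1},\nabla_{\frac{\partial}{\partial x_0}}\frac{\partial}{\partial x_{n+1}})$, together with $|V_2|_g^2=g_{11}$ (since $g_{11}=g_{(n+1)(n+1)}$, $g_{1,(n+1)}=0$), one gets
\[
\big|\nabla_{\frac{\partial}{\partial x_0}}V\big|_g^2-R\Big(\tfrac{\partial f_p}{\partial x_0},V,\tfrac{\partial f_p}{\partial x_0},V\Big)=g_{11}-\tfrac1\pi\,g\Big(\tfrac{\partial}{\partial x_1},\nabla_{\frac{\partial}{\partial x_0}}\tfrac{\partial}{\partial x_{n+1}}\Big).
\]
By \eqref{derivative}, $g(\frac{\partial}{\partial x_1},\nabla_{\frac{\partial}{\partial x_0}}\frac{\partial}{\partial x_{n+1}})=\tfrac12(g_{0,1,(n+1)}-g_{0,(n+1),1})=(1+\epsilon^2)\frac{\pi}{4A^2}(x_{n+1}^2-x_1^2)=(1+\epsilon^2)\frac{\pi x_{n+1}^2}{4A^2}$ at $x_1=0$, and reading $g_{11}$ off \eqref{induced metric n}, \eqref{Rie} and inserting $|w_1|^2=x_{n+1}^2=A-1$ yields, exactly as in Theorem \ref{unstable p}, a pointwise integrand that is strictly negative once $|x_{n+1}|>\sqrt2$. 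For the $x_n$-direction term, $a_1,a_{n+1}$ depend only on $x_0$, so $V_4=0$ and $\partial a_i/\partial x_n=0$, whence \eqref{stable 2 3} makes that integral vanish. Therefore $I(V,V)<0$.

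The algebra of $g_{11}$ and of the first derivatives is routine and is already recorded in \eqref{derivative}, \eqref{derivative 2} and Proposition \ref{tool}. The one point that needs genuine care is confirming that nothing from the extra directions $w_2,\dots,w_{n-1}$ leaks in: along the slice $w_2=\cdots=w_{n-1}=0$, $x_1=0$ one must verify that $R(\frac{\partial f_p}{\partial x_0},V,\frac{\partial f_p}{\partial x_0},V)$ and $R(\frac{\partial f_p}{\partial x_n},V,\frac{\partial f_p}{\partial x_n},V)$ are still equal to products of Christoffel symbols carrying only the indices $1$ and $n+1$, so that the $|V_1|^2_g$ term cancels against the curvature term as in \eqref{S2 3}. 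This is precisely the content of part (3) of Proposition \ref{tool} together with the vanishing $\Gamma^p_{kl}=0$ for $k,l\in\{0,n\}$, and it is the step that legitimizes the reduction to the two-dimensional computation of Section 3.
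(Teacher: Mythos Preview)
Your proposal is correct and follows essentially the same approach as the paper: you choose the identical variation field $V=a_1\frac{\partial}{\partial x_1}+a_{n+1}\frac{\partial}{\partial x_{n+1}}$ with $a_1=\frac{1}{2\pi}\cos(2\pi x_0)$, $a_{n+1}=\frac{1}{2\pi}\sin(2\pi x_0)$, apply \eqref{SV2 3} to reduce the $x_0$-part of the integrand to $g_{11}-\frac{1}{2\pi}(g_{01,n+1}-g_{0(n+1),1})$, read off the derivatives from \eqref{derivative}, and use \eqref{stable 2 3} with $V_4=0$ to kill the $x_n$-contribution. Your extra paragraph explaining why the directions $w_2,\dots,w_{n-1}$ do not contribute is a welcome clarification but is already implicit in the paper via Proposition \ref{tool} and the choice of $V$.
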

\begin{proof}
	Choose $V=a_{1}\frac{\partial}{\partial x_{1}}+a_{n+1}\frac{\partial}{\partial x_{n+1}}$ with $a_{1}=\frac{1}{2\pi}\cos(2\pi x_{0}), a_{n+1}=\frac{1}{2\pi}\sin(2\pi x_{0})$. By \eqref{SV2 3}, Proposition \ref{tool} and $g_{1(n+1)}=0$, when $x_{1}=0$ and $w_{2}=\cdots=w_{n-1}=0$,
	\begin{equation*}
	\begin{split}
	&\big|\nabla_{\frac{\partial }{\partial x_{0}}}V\big|^{2}_{g}-R\left(\frac{\partial f_{p}}{\partial x_{0}}, V, \frac{\partial f_{p}}{\partial x_{0}}, V\right)\\
	=& \sin^2(2\pi x_0) \,g_{11}+\cos^2(2\pi x_0) \,g_{(n+1)(n+1)}-\frac{1}{2\pi}\left(g_{01,n+1}-g_{0(n+1),1}\right)\ \ \ \ \ \ \ \\
	=&\frac{1}{2}\left(\frac{1}{A}-\frac{3|w_{1}|^{2}}{4A^{2}}+\frac{\epsilon^{2}|w_{1}|^{2}}{4A^{2}}\right)-\left(1+\epsilon^{2}\right)\frac{1}{4A^{2}}x_{n+1}^{2}
	< -\frac{1}{4A}+\frac{3}{4A^{2}}<0,
	\end{split}
	\end{equation*}
	when $|x_{n+1}|>\hspace{-.15cm}\sqrt{2}$. 
		Note $\frac{\partial a_{1}}{\partial x_{n}}=\frac{\partial a_{n+1}}{\partial x_{n}}=0$. Hence  $V_{4}=0$.
	By \eqref{stable 2 3},
	\begin{equation*}
	\int_{\T_{p}^{2}}\left\{|\nabla_{\frac{\partial }{\partial x_{n}}}V|^{2}_{g}-R\left(\frac{\partial f_{p}}{\partial x_{n}}, V, \frac{\partial f_{p}}{\partial x_{n}}, V\right)\right\}dx_{0}dx_{n}=0.
	\end{equation*}
	It follows $I(W, W)<0.$
\end{proof}

\begin{theo}\label{minimal surface n}
	Each fibre $\T^2_p$ of the Hopf fibration is a flat and totally geodesic minimal surface in $(\S^{2n-1}\times\S^1,{g}_\epsilon)$. It is a stable minimal surface if $p=p_0$ and it is unstable if $p$ is as in Theorem \ref{unstable p n}, where $p_0\in T_{1}.$
\end{theo}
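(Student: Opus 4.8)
The plan is to transcribe the proof of Theorem \ref{minimal surface} into the higher-dimensional setting, since Proposition \ref{tool} already packages every Riemannian ingredient that is needed. First I would establish that each fibre is flat and totally geodesic. Restricting the metric \eqref{induced metric n} (read through \eqref{Rie}) to a fibre $\T^2_p$, the only surviving components in the coordinates $x_0,x_n$ are $g_{00}=g_{nn}$, a constant multiple of $(1+\epsilon^{2})\pi^{2}$, and $g_{0n}=0$, so the induced metric on $\T^2_p$ is flat. Since $\Gamma^p_{kl}=0$ for $k,l\in\{0,n\}$ and all $p$ by the first assertion of Proposition \ref{tool}, the Gauss formula forces the second fundamental form $A$ of $\T^2_p$ in $(\S^{2n-1}\times\S^1,g_\epsilon)$ to vanish; hence $\T^2_p$ is totally geodesic, and in particular a minimal surface.

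For instability, I would invoke the Ejiri--Micallef comparison \cite{EM}: the Morse index $i_A$ of the area functional at the minimal immersion $\T^2_p$ dominates the Morse index $i_E$ of $f_p$ as a harmonic map. Theorem \ref{unstable p n} exhibits a compactly supported variation with negative index form for $f_p$ in the stated range of $p$, so $i_E\ge 1$ and therefore $i_A\ge 1$, i.e. $\T^2_p$ is an unstable minimal surface.

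For stability at $p_0$ (the point $w'=0$ in $V_{00}$, which lies on $T_1$), note that since $\T^2_{p_0}$ is totally geodesic we have $A\equiv 0$, so for a normal field $X$ the second variation of area collapses to
\begin{equation*}
\delta^{2}|\T^{2}_{p_0}|(X,X)=\int_{\T^{2}_{p_0}}\big|\nabla^{\perp}X\big|^{2}-\mathrm{tr}_{g_\epsilon}R_{g_\epsilon}(\cdot,X,\cdot,X).
\end{equation*}
It then suffices to show the curvature trace vanishes at $w'=0$, i.e. that $R_{0r0s}=R_{0rns}=R_{nrns}=0$ at $p_0$ for all $0\le r,s\le 2n-1$. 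By the third assertion of Proposition \ref{tool} these equal $\Gamma^{p}_{r0}\Gamma^{q}_{s0}g_{pq}$, $\Gamma^{p}_{rn}\Gamma^{q}_{s0}g_{pq}$ and $\Gamma^{p}_{rn}\Gamma^{q}_{sn}g_{pq}$ respectively; and since $g$ is independent of $x_0,x_n$, each of $\Gamma^{p}_{r0}=\frac{1}{2}g^{pq}(g_{q0,r}-g_{r0,q})$ and $\Gamma^{p}_{rn}=\frac{1}{2}g^{pq}(g_{qn,r}-g_{rn,q})$ is $O(|w'|)$ by the first-derivative estimates in Proposition \ref{tool}, hence vanishes at $p_0$. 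Thus $\mathrm{tr}_{g_\epsilon}R_{g_\epsilon}(\cdot,X,\cdot,X)=0$ at $p_0$, so $\delta^{2}|\T^{2}_{p_0}|(X,X)\ge 0$ for every normal field $X$ and $\T^{2}_{p_0}$ is stable. Running the same argument in $V_{(i-1)0}$ would handle each $T_i$ if one wanted the analogue for the other special tori.

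The computations here are routine; I do not expect a genuine obstacle, as all the structural input (vanishing of the relevant Christoffel symbols, the first-order smallness of the metric derivatives, and the quadratic-in-Christoffel form of the curvature components) is already isolated in Proposition \ref{tool}. The only point that needs a moment's attention is the mixed term $R_{0rns}$, the $x_0$--$x_n$ cross curvature, which must also be shown to vanish at $w'=0$; this is precisely why that identity is recorded in Proposition \ref{tool}. Beyond that, the sole bookkeeping care is matching the normalization of the induced fibre metric with \eqref{Rie}.
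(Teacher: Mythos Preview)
Your proposal is correct and follows essentially the same route as the paper's own proof: flatness and total geodesy from the metric restriction and Proposition \ref{tool}(1), instability via the Ejiri--Micallef inequality $i_E\le i_A$ combined with Theorem \ref{unstable p n}, and stability at $p_0$ by showing the curvature terms $R_{0r0s},R_{0rns},R_{nrns}$ vanish at $w'=0$. The paper obtains this last vanishing by referring back to the $n=2$ computation \eqref{curvature of torus 1}, whereas you invoke Proposition \ref{tool}(2)--(3) explicitly to see that the relevant Christoffel symbols are $O(|w'|)$; these are the same argument in different packaging.
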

\begin{proof}
	As argued for Theorem \ref{minimal surface}, $\T^2_p$ is totally geodesic in $(\S^{2n-1}\times\S^1,{g}_\epsilon)$ and is an unstable minimal surface if $f_p$ is an unstable harmonic map.

	We now consider $p_{0}\in T_{1}$. 
	Similarly with \eqref{curvature of torus 1}, we have, at $w'=0$, i.e., at $p_0$,
	\begin{equation}\label{curvature of torus 1 n}
	R_{0i0j}=0, \ \ 	R_{0inj}=0, \ \ 	R_{ninj}=0,\ \ 0\leq i,j\leq 2n-1.
	\end{equation}
Using the argument of Theorem \ref{minimal surface}, 	 $\T^{2}_{p_0}$ is a stable minimal surface. \end{proof}

\section{Appendix}

The following result mentioned Introduction should be well known. A proof may not be explicitly documented in the literature, we include one below.
\begin{prop}
There is no stable branched minimal immersion of a compact Riemann surface $\Sigma$ in $\S^3\times\S^1$ equipped with the product metric $g=g_1\oplus g_2$ where $g_1$ is the constant curvature 1 metric on $\S^3$ and $g_2$ is any Riemannian metric on $\S^1$.
\end{prop}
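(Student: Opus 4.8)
The plan is to prove the contrapositive form: \emph{every} branched minimal immersion $F\colon\Sigma\to(\S^3\times\S^1,g)$ is unstable. I will not attack the area functional directly — branch points make the normal bundle awkward — but instead produce a variation with negative second variation of \emph{energy}, using the Xin-type argument for harmonic maps into spheres \cite{Xin}, and then transfer the instability to area via the Ejiri--Micallef inequality $i_A\ge i_E$ \cite{EM}, exactly as in the proof of Theorem \ref{minimal surface}.

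First I would reduce to the $\S^3$-factor. Since $g=g_1\oplus g_2$ is a Riemannian product, the energy splits, $E(F)=E(u)+E(v)$ for $F=(u,v)$ with $u=\pi_1\circ F\colon\Sigma\to\S^3$ and $v=\pi_2\circ F\colon\Sigma\to\S^1$, so $u$ and $v$ are separately harmonic. Moreover $u$ is nonconstant: otherwise $dF=(0,dv)$ would have rank at most $1$ everywhere, contradicting that $F$ is an immersion off its discrete branch set.

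Next I would carry out the second-variation computation with the conformal fields of $\S^3$. For $a\in\R^4$ let $X_a(x)=a-\langle a,x\rangle x$ be the tangential part of the constant field $a$ along $\S^3\subset\R^4$, and put $V_a:=(X_a\circ u,0)\in\Gamma\big(F^{*}T(\S^3\times\S^1)\big)$. Because this variation moves only the first factor, $\delta^2E_F(V_a,V_a)=\delta^2E_u(V_a,V_a)$, and because $g_2$ is flat the curvature of $N=\S^3\times\S^1$ evaluated on these fields reduces to that of $\S^3$, which has constant curvature $1$. Using $\nabla^{\S^3}_{du(e_i)}X_a=-\langle a,u\rangle\,du(e_i)$, the pointwise identities $\sum_{a=1}^{4}X_a\otimes X_a=\mathrm{Id}_{T\S^3}$ and $\sum_{a=1}^4\langle a,Z\rangle^2=|Z|^2$ for $Z\in T\S^3$, and $|u|\equiv1$, one gets $\sum_a|\nabla V_a|^2=|du|^2$ and $\sum_{a,i}\langle R(V_a,dF(e_i))dF(e_i),V_a\rangle=3|du|^2-|du|^2=2|du|^2$, whence
\begin{equation*}
\sum_{a=1}^{4}\delta^{2}E_{F}(V_a,V_a)=\int_{\Sigma}\Big(\sum_{a}|\nabla V_a|^{2}-\sum_{a,i}\langle R(V_a,dF(e_i))dF(e_i),V_a\rangle\Big)\,dv_{g}=-\int_{\Sigma}|du|^{2}\,dv_{g}<0
\end{equation*}
since $u$ is nonconstant. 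Hence some $V_a$ has $\delta^2E_F(V_a,V_a)<0$, so the energy index satisfies $i_E(F)\ge1$.

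Finally, by the Ejiri--Micallef theorem \cite{EM}, $i_A(F)\ge i_E(F)\ge1$, so $F$ is an unstable branched minimal surface, which proves the Proposition. The only point requiring care is the presence of branch points: I sidestep it by working with energy — whose second-variation formula is valid for \emph{all} sections of $F^{*}TN$ irrespective of branch points — and invoking \cite{EM} only at the very end. The numerology is forced by the hypotheses: $\S^3$ carries positive constant curvature and has dimension $\ge3$, so its curvature term overwhelms the Dirichlet term in the sum over $a$, while $g_2$ is automatically flat (every metric on $\S^1$ is), so the $\S^1$-factor contributes nothing to the curvature term and in particular nothing adverse.
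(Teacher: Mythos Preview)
Your proof is correct and follows essentially the same strategy as the paper's: reduce to the nonconstant harmonic map $u\colon\Sigma\to\S^3$, establish its energy-instability, lift the destabilizing variation to $(\,\cdot\,,0)$, and conclude area-instability via the Ejiri--Micallef inequality $i_A\ge i_E$. The only difference is that the paper invokes Leung's theorem \cite{Le} as a black box for the instability of $u$, whereas you spell out the underlying Xin-type conformal-field averaging explicitly; the remaining steps match.
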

\begin{proof}
Suppose $f:\Sigma\to\S^3\times\S^1$ is a branched conformal map that is harmonic as well. We write $f = (f_1,f_2)$ where $f_1:\Sigma\to\S^3,f_2:\Sigma\to\S^1$. Due to the product structure of the metric, $f_1$ is a harmonic map into the round $3$-sphere $\S^3$ (similarly for $f_2$). By a result of Leung \cite{Le} the harmonic map $f_1$ must be unstable or constant. The latter is impossible since a nonconstant conformal map cannot have image in $\{q\} \times \S^1$ for some $q\in\S^3$.

There is a variation $V_1$ of the unstable harmonic map $f_1$ making second variation of the energy negative. In other words, there is a family of $f^t_1\in C^2(\Sigma,\S^3)$ parametrized by $t$ with $f^0_1=f_1$, so that $V_1=\frac{\partial f^t_1}{\partial t}\big|_{t=0}$, and
$$
\int_{\Sigma}\left[\big|\nabla^{g_1}_{\frac{\partial}{\partial \overline{w}_{0}}}V_1\big|^2_{g_1}-\mbox{tr}_{g_1} R_{g_1}\left(V_1,\frac{\partial f_{1}}{\partial w_{0}},V_1, \overline{\frac{\partial f_{1}}{\partial w_{0}}}\right)\right]d\mu_{T^2}<0
$$
where $\nabla^{g_1}$ is the pullback connection on the bundle $f^*_1T\S^3$ over $\Sigma$ by $f_1$.

Set $V=(V_1,0)$. For $f^t=(f^t_1,f_2)$, it is clear that $f^0=f$ and $V=\frac{\partial f^t}{\partial t}\big|_{t=0}$. As $g=g_1\oplus g_2$, the pullback connection
$\nabla^g$ on the pullback bundle $f^*T(\S^3\times\S^1)$ over $T^2$ by $f$ splits into $\nabla^{g_1}+\nabla^{g_2}$, and the Riemannian curvature simplifies according to
$$
R_g(X,Y ,X,Y)=R_{g_1}(X_1,Y_1, X_1,Y_1)+R_{g_2}(X_2,Y_2, X_2,Y_2)=R_{g_1}(X_1,Y_1,X_1,Y_1 )
$$
 where $X=X_1+X_2,Y=Y_1+Y_2$ and $X_1,Y_1\in T\S^3,X_2,Y_2\in T\S^1$. Then
\begin{align*}
\int_{\Sigma}&\left[\big|\nabla^{g}_{\frac{\partial}{\partial \overline{w}_{0}}}V\big|^2_g - \mbox{tr}_gR_g\left(V,\frac{\partial f}{\partial w_{0}},V,\overline{\frac{\partial f}{\partial w_{0}}}\right)\right]d\mu_{\Sigma}\\
&=\int_{\Sigma}\left[  \big|\nabla^{g_1}_{\frac{\partial}{\partial \overline{w}_{0}}}V_1\big|^2_{g_1}-\mbox{tr}_{g_1}R_{g_1}\left(V_1,\frac{\partial f_{1}}{\partial w_{0}},V_1,\overline{\frac{\partial f_{1}}{\partial w_{0}}}\right)   \right] d\mu_{\Sigma}<0.
\end{align*}
Therefore $f$ is unstable as a harmonic map. In fact, the argument up to this point holds for harmonic maps from any compact manifold to $\S^3\times\S^1$ with the product metric $g$; so they are all unstable. 

Now, since $i_E>0$, the inequality $i_E\leq i_A$ in \cite{EM} implies that $f$ is unstable for area as a minimal surface.
\end{proof}

\end{document}